\def\ve{\varepsilon}
\newcommand{\eee}{equation}
\newcommand{\be}{\begin{\eee}}
\newcommand{\ee}{\end{\eee}}
\newcommand{\nrm}[1]{\Vert#1\Vert}
\newcommand{\nnrm}[1]{{\vert\kern-0.25ex\vert\kern-0.25ex\vert #1 
		\vert\kern-0.25ex\vert\kern-0.25ex\vert}}
\newcommand{\tht}{\theta}
\newcommand{\gmm}{\gamma}
\newcommand{\dlt}{\delta}
\newcommand{\rmd}{{\rm d}}
\newcommand{\id}{{\rm id}}
\newcommand{\Dm}{{\mathscr{D}_\mu(M)}}
\newtheorem{proposition}{Proposition}[section]
\newtheorem{theorem}[proposition]{Theorem}
\newtheorem{corollary}[proposition]{Corollary}
\newtheorem{lemma}[proposition]{Lemma}
\theoremstyle{definition}
\newtheorem{definition}[proposition]{Definition}
\newtheorem{remark}[proposition]{Remark}
\theoremstyle{definition}
\numberwithin{equation}{section}
\newcounter{nmdthmcnt}
\newenvironment{namedthm2}[2][]{\addtocounter{nmdthmcnt}{1}%
    \theoremstyle{plain}\newtheorem*{nmdthm\roman{nmdthmcnt}}{#2}%
    \begin{nmdthm\roman{nmdthmcnt}}[#1]}{\end{nmdthm\roman{nmdthmcnt}}}
\title{Twisting in Hamiltonian Flows and Perfect Fluids}
\begin{document}

\date{\today}
\author{Theodore D. Drivas\footnote{Department of Mathematics, Stony Brook University, Stony Brook, NY, 11794, USA\\
\phantom{ads} Email address: {\tt tdrivas@math.stonybrook.edu}},  \,Tarek M. Elgindi\footnote{Mathematics Department, Duke University, Durham, NC 27708, USA
Email address: {\tt tarek.elgindi@duke.edu}}, \, and In-Jee Jeong\footnote{Department of Mathematical Sciences and RIM, Seoul National University. Email address: {\tt injee\_j@snu.ac.kr}}}
\maketitle

\begin{abstract}We introduce a notion of stability for non-autonomous Hamiltonian flows on two-dimensional annular surfaces. This notion of stability is designed to capture the sustained twisting of particle trajectories. The main Theorem is applied to establish a number of results that reveal a form of irreversibility in the Euler equations governing the motion of an incompressible and inviscid fluid.
In particular, we
show that nearby general stable steady states (i) all fluid flows exhibit indefinite twisting (ii) vorticity generically exhibits gradient growth and  wandering. We also
give examples of infinite time gradient growth for smooth solutions to the SQG equation and
of smooth vortex patches  that entangle and develop unbounded
perimeter in infinite time.
\end{abstract}

\tableofcontents

\section{Introduction}

Stability is one of the cornerstones of classical mechanics and the theory of differential equations. Stable configurations are those that are robust and generally observed. As such, determining which configurations in a given system are stable, and in what sense, is of great importance. When considering the motion of particles in space, such as in a fluid,  there are two notions of stability that one could consider.  \textit{Eulerian} stability pertains to the stability of the velocities of the particles whereas \textit{Lagrangian} stability pertains to the stability of their positions.
A simple example illustrating the difference between these two notions of stability is given by the motion of a single particle in free-space. Let us denote by $x(t)\in\mathbb{R}^d$ and $v(t)\in\mathbb{R}^d$ the location and velocity, respectively, of the particle at time $t\in\mathbb{R}$. Under the assumption of no external force on the particle,  the particle moves with its unchanging velocity:
\[\dot{x}(t)=v(t),\]
\[\dot{v}(t)=0.\]
Therefore, if we know the velocity of the particle at any fixed time $t=t_0$, we can find the location of the particle for all time by integrating the equations of motion. If at $t=t_0$ there was an ever-so slight inaccuracy in our measurement of the velocity of the particle, this would not affect the system too much from the Eulerian point of view (trivially for this system). Indeed, 
\[|v(t_0)-v_*(t_0)|\leq \ve \implies |v(t)-v_*(t)|\leq \ve,\] for all time $t\in\mathbb{R}$ for any $\ve>0$. In particular, slight errors in our measurement of the velocity at some time $t_0$ do not accumulate over time and do not lead to any significant error in our prediction of the velocity at later times. This is an example of \textit{Eulerian stability}. 

As far as the motion of the particle goes, however, making any error in the measurement of the velocity \textit{will} lead to large deviations in the positions of the particle. Indeed, two particles $(x(t),v(t))$ and $(x_*(t),v_*(t))$ leaving from the same location satisfy
\[|v(t_0)-v_*(t_0)|=\ve\implies |x(t)-x_*(t)|=\ve |t-t_0|,\] for all $t\in\mathbb{R}$. In particular, the distance between them  becomes arbitrarily large over time. This example thus exhibits \textit{Lagrangian instability}\footnote{Another interesting example is that of objects orbiting around a fixed center with slightly different angular velocity, maybe due to the effect of gravitation. In this case, the distance between the objects can regularly increase and decrease in time.}. Our purpose in mentioning this example is to say that {Eulerian} stability is the only reasonable form of stability that can be satisfied by the system as far as the strict notion of stability is concerned. As far as stability in the strict sense of closeness of norms goes, Lagrangian stability should not be expected. 

A natural question one can ask is whether we can relax the notion of Lagrangian stability in such a way that we can still deduce useful information about the long-time behavior of particle motion. This relaxation will be done on two fronts: first, by focusing our attention on the behavior of a continuum of (interacting) particles, and second, by restricting our attention to certain qualitative and quantitative Lagrangian features. An important setting for the applications we have in mind is when the motion is also area preserving, or incompressible. Maintaining this constraint defines the interaction between the particles.
Since we will discuss continua of particles, it makes sense to consider the so-called particle trajectory map. On a smooth Riemannian manifold  $M$, we can consider a suitably smooth, divergence-free, and time-dependent velocity field $v:M\times \mathbb{R}\rightarrow TM$. In this case, we can define the associated particle trajectory map:
\begin{align}
    \frac{\rmd }{\rmd t}\Phi(x,t)&=v(\Phi(x,t),t),\\
\Phi(x,0)&=x.
\end{align}
 Note that for each $t\in\mathbb{R}$, $\Phi(\cdot,t)$ is a volume-preserving diffeomorphism of $M$.
The first relaxation of the notion of stability we have in mind, thus, is to consider the stability of the map $\Phi(\cdot, t)$ rather than a single trajectory $\Phi(x,t)$. To give a taste of the type of Lagrangian features we will discuss later, let us mention a version of the stability theorem we will establish. 


\begin{theorem}\label{thmmaindiff}
Let $M= \mathbb{T}\times [0,1],$ the periodic channel. Fix $v_*\in C^1(M)$ defined by $v_*(x_1,x_2)=(V(x_2),0)$, with $V$ strictly monotone. Consider a time-dependent divergence-free velocity field $v\in C^1(M\times \mathbb{R})$ and its flow map $\Phi(\cdot,t)=(\Phi_1(\cdot,t),\Phi_2(\cdot,t))$. Assume that $\|v(\cdot,t)-v_*(\cdot)\|_{L^2(M)}<\ve$ for all $t\in\mathbb{R}$. Then,
\be\label{updown}
\|\Phi_2(x,t)-x_2\|_{L^2}\leq C \sqrt{\ve} \log (1+|t|)
\ee for all $t\in\mathbb{R}$, for some fixed $C$ that depends only on $V$. 
Moreover
\be\label{sideside}
\|\widetilde{\Phi}_1(x,t)- x_1 - v_*(\widetilde{\Phi}_2(x,t)) t\|_{L^2}  \leq C\sqrt{\ve} |t| 
\ee
for all $t\in\mathbb{R}$. In \eqref{sideside}, $\widetilde{\Phi}(\cdot,t)$ is the lift of the flow to the universal cover $\widetilde{M}= \mathbb{R}\times [0,1]$.
\end{theorem}

This result is a special case of Theorem \ref{twistingthm2} below, while Theorem \ref{twistingthm} gives a localized version.

We first comment on the estimate \eqref{updown}.
In the above setting, since $v$ is completely non-autonomous, it is easy to move a single particle vertically a distance of order $1$ in time on the order of $\frac{1}{\ve}$; however, it takes an exponentially longer amount of time to accomplish this task for a mass of particles. This is reminiscent of the work of Nekhoroshev \cite{Nek} on Arnold diffusion \cite{As} that concerns the motion of single trajectories in nearly integrable systems. 
Theorem \ref{thmmaindiff} says that, while horizontal motion may be highly unstable in this setting in the strict sense, vertical motion is much more stable. In fact, the proof of this theorem follows from a stronger result that establishes a type of stability of the motion on the universal cover of $M$. 

Next we comment on the estimate \eqref{sideside}, which we refer to as ``stability of twisting" (twisting is defined in Definition \eqref{DefinitionTwisting}). This is established by a certain rigidity, of a topological nature, of the ``lifted" dynamics of particles on $\mathbb{R}\times [0,1]$, the universal cover of $M$. In that setting, the number of times a particle has wound around the circle in $\mathbb{T}\times [0,1]$ can be described simply by the change in its horizontal location in $\mathbb{R}\times [0,1].$ The average winding of the particles can then be described by integral quantities on the universal cover. The estimation of these integral quantities relies heavily on topological constraints on the image of the particle trajectory map in the universal cover. Indeed, the more wound up the particles become, the more difficult it is to unwind them. It is noteworthy that the proof of the stability of twisting theorem is of a global nature since every particle starting in a tubular neighborhood could, in principle, exit the neighborhood in finite time.   




\begin{remark}[On the Calabi Invariant]
The results and techniques on twisting of flows introduced here should be considered in relation to the celebrated Calabi invariant, which is a measure of the overall winding in a slightly more restricted setting of diffeomorphisms that leave points on the boundary fixed, see e.g. \cite{AK}.  More precisely  if  $\varphi$ is such a diffeomorphism, then this Calabi invariant is computed by averaging the winding of all pairs of points along \textit{any} isotopy from $\varphi$ to the identity \cite{Fathi,Fathi2}.  This number ends up being independent of the isotopy and hence an invariant of the group of said diffeomorphisms.  This object was used in the first proof of the unboundedness of the diameter of the group of area-preserving diffeomorphisms \cite{ER}. To prove our main results, we introduce quantities that appear to be some localized analogue of the Calabi invariant in our context. Though we do not establish any general invariant properties per se, the bounds we prove indicate that there may be some invariance at play (see Lemma \ref{keylemma}). 
\end{remark}

\subsection{On the long-time behavior of 2d ideal fluid flows}

 We now move to discuss applications to the 2d Euler equation. The 2d Euler equation can be thought of as a 2d Hamiltonian flow where the Hamiltonian and the flow are coupled through a non-linear and non-local law. Indeed, the Euler equation on two dimensional domains $M$ can be written as:
\be
\frac{\rmd}{\rmd t}\Phi(x,t)= \nabla^\perp \psi(\Phi(x,t),t), \qquad  \psi(x,t) :=  K_M[\omega_0\circ \Phi^{-1}(\cdot, t)](x)
\ee where $K_M$ is the Green's function of the  Laplacian on $M$ (with appropriate boundary conditions when $\partial M$ is non-empty, see \S 2.2 of \cite{DE}). 
Here, $\omega_0$ is the initial vorticity of the fluid. Thus, for each $\omega_0$, the particle trajectory map $\Phi(\cdot, t)$ solves a 2d Hamiltonian system where the Hamiltonian is determined by the particle trajectory map itself. Understanding the long-time behavior of 2d Euler flows is a major challenge and what has been established rigorously is quite far from heuristic arguments and expectations \cite{DE}. Major progress has been made using mixing, though in somewhat restrictive settings, starting with the breakthrough work \cite{B}. The long-time behavior of 2d Euler flows in the large, however, remains wide open and new tools are needed.  

As an application of our stability results for general 2d Hamiltonian flows,  we establish some qualitative forms of stability for the 2d Euler equations that are Lagrangian in nature. We prove:
\begin{namedthm2}{Results on inviscid fluids}{\rm [Informal Statement]} Open sets of solutions of the two-dimensional Euler equation on annular domains close to  Eulerian stable equilibria which induce some differential rotation (shearing) between particles  exhibit:
\begin{enumerate}
    \item {\bf ``Aging" of the flow (Theorem \ref{agethm}):} The distance between the initial particle configuration and the particle configuration at time $t$ diverges linearly as $|t|\rightarrow\infty$. The amount of energy required to undo the twisting grows linearly in time. 
    \item {\bf Filamentation, spiraling, and wandering (Theorems \ref{gengrowth} and \ref{wanderingthm}):}  Level sets of the vorticity filament and spiral infinitely as $t\rightarrow\infty.$ Nearby neighborhoods wander in $L^\infty.$
    \item {\bf Infinite time blowup for SQG on $\mathbb{T}^2$ (Theorem  \ref{sqgthm}):}  We exhibit smooth solutions of the SQG equation that display unbounded gradient growth.
    \item {\bf Unbounded Perimeter growth for vortex patches (Theorem \ref{thm:patch}):}  We give examples of a smooth vortex patch on $\mathbb{R}^2$ whose perimeter grows at least linearly in time.
\end{enumerate}
\end{namedthm2}

  The mechanism we identify for loss of smoothness and wandering is greatly influenced by the work of Nadirashvili \cite{N}. There, an analogue of point 2 above was established in a more restrictive setting by crucially using the invariance of the boundaries of the fluid domain.  In essence, our work justifies that one can replace Nadirashvili’s fixed annulus by a topological time-dependent annulus constrained by two levels of an appropriate stream function, and this is at the heart of all our results. Points 1 and 2 of our main results validate a conjecture of Yudovich \cite{Y1,Y2} on unbounded growth of solutions to the Euler equation nearby stable steady states. 
Concerning points 3 and 4, no rigorous all-time results were previously available; see, respectively, \cite{HeKiselev, KN} and \cite{CJ, EJSVPII, D, MMZ}.

\section{Stability of Twisting in Hamiltonian Flows}\label{highway}

On a general annular domain\footnote{More generally, we can consider two-dimensional surfaces.} $M\subset \mathbb{R}^2$, consider a Hamiltonian $\psi:M\times \mathbb{R}\to \mathbb{R}$ and its flow 
\[
\frac{\rmd}{\rmd t} \Phi(x,t)= \nabla^\perp \psi(\Phi(x,t),t).
\]
It is supposed that the flows leave the boundaries invariant (the Hamiltonian is constant on connected components of the boundary, making the velocity field tangent).
When $\psi(x,t)=\psi_0(x)$ is autonomous, 
it is well known that the domain is foliated by periodic orbits and orbits connecting fixed points. This follows directly from the proof of the Poincar\'{e}--Bendixson theorem.  Indeed, since the flow preserves the level sets of the Hamiltonian:
\[\frac{\rmd}{\rmd t}\psi_0(\Phi(x,t))=0,\]
the system can be exactly solved using action-angle variables \cite{A2}. In particular, on any annular subdomain of $M$ foliated by periodic orbits, coordinates $(r,\theta)\in A:=[r_0,r_1]\times \mathbb{S}^1$ can be found so that the system becomes exactly
\[\dot\theta(t) = v(r(t)),\qquad \dot{r}(t) =0.\]
The exact solution in these coordinates is then:
\[(r(t),\theta(t))=\Big(r(0),\theta(0)+v(r(0))t\Big).\] 
When $v$ is non-constant, the flow “twists” the particles in the annular region $A$. For example, if $v(r_1)>0>v(r_0)$, we find that particles on the circle $r=r_1$ are constantly rotated counterclockwise, while particles on the circle $r=r_0$ are constantly rotated clockwise.  While this twisting does not lead to the exponential separation of particles, as in truly chaotic dynamics, it does lead to linear separation of particles, which might be measured by the  distortion bound
\[\|\nabla\Phi(\cdot, t)\|_{L^\infty}\geq c t.\]  
The $c>0$ in the above inequality is just a constant that measures the degree of twisting ($v(r_1)-v(r_0)$ in the example above). Another way to measure the twisting induced by autonomous Hamiltonian flows is to study the image of curves that are transversal to the flow lines. For example, we may consider the image of the curve $\theta=0$ in action-angle variables. It is easy to see that this is mapped to the curve 
\[\Gamma_t:=\{(r,v(r)t): r\in [r_0,r_1]\}.\]
The length of $\Gamma_t$ grows linearly in time and spirals around the center of the annulus (in fact, it intersects each line $\theta=\theta_0$ approximately $c  t$ times as $t\rightarrow\infty$). 

To make all of the above statements about twisting, we relied crucially on the fact that $\psi=\psi_0$ was autonomous. In the non-autonomous case, the analysis above breaks down because there need not be any periodic orbits in the system. In fact, it is not even clear how one should define twisting when the Hamiltonian is non-autonomous. As a step in this direction, we establish a general stability  of twisting result. Namely, we show that non-autonomous perturbations of autonomous flows exhibit twisting in the senses we mentioned above (gradient growth and spiral formation). A conceptual difficulty in making this work is that, in the autonomous case, we had to rely on the motion of single particles. In the non-autonomous case, singling out finitely many particles for study is impossible. Indeed, the flow could evolve in time in a conspiring way such that no one particle behaves in a predictable way. To fix this problem, we first uncover a softer notion of ``twisting'' that encompasses the above facts about the flow. Second, we introduce integral quantities that capture this twisting for masses of particles and for which we can prove some stability theorems.

\subsection{Twisting and its stability}

We now turn to define ``twisting." Consider a family of homeomorphisms $\Phi(\cdot, t):M\rightarrow M$ depending continuously on the parameter $t\in \mathbb{R}$. Since $M$ is an annular surface, we could equip it with global polar coordinates $(r,\theta)$ with $r\in [a,b]$ and $\theta\in \mathbb{T}.$ Given any single trajectory, there is a unique way to lift $\Phi(x,t)$ (which lives on $M$) to $\widetilde{\Phi}(x,t)$ living on $[a,b]\times \mathbb{R}$ for which the $\theta$ coordinate of $\widetilde{\Phi}(x,t)$ is continuous in time. 

\begin{definition}\label{DefinitionTwisting}
An incompressible and continuous flow $\Phi(\cdot,t):M\rightarrow M$ is said to be twisting if the diameter of $\widetilde{\Phi}(M,t)$ becomes unbounded as $|t|\rightarrow \infty.$  
\end{definition}

	\begin{figure}[h!]
		\centering
		\includegraphics[]{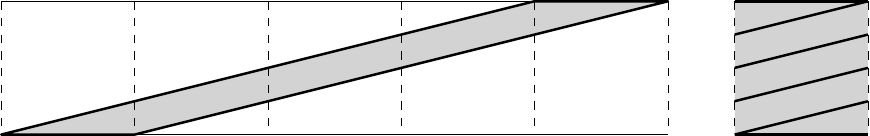}  
		\caption{Evolution of $\widetilde{\Phi}(M,t)$ (colored region) under linear shear} 
  \label{fig:shear}
	\end{figure}

Since there is a direct correspondence between velocity fields and the flows they produce, we may also call the velocity fields twisting if their flows are twisting. A simple example of a twisting flow on $\mathbb{T}\times [0,1]$ generated by the twisting velocity $u=(x_2,0)$ is given by \[\widetilde{\Phi}(x,t)=(x_1+t x_2,x_2),\] while  a non-twisting flow generated by  velocity $u=(x_2\cos(t),0)$ on the same domain is given by 
\[\widetilde{\Phi}(x,t)=(x_1+x_2\sin(t),x_2).\] 

The above definition of twisting generalizes the notion of a ``twist map" in the setting of flows. Indeed, recall that an autonomous Hamiltonian $\psi_*= \psi_*(x)$ generates a \textit{twist map} $\Phi^*(\cdot, t)$ provided that the time a particle takes to orbit the  level set $\{\psi_*=c\}$:
\be\label{traveltime}
\mu(c) := \int_{\{\psi_*=c\}} \frac{\rmd \ell}{|\nabla \psi_*|}
\ee
is non-constant on $\partial M$. It is easy to see that for a twist map the diameter of $\widetilde{\Phi}(M,t)$ grows like a constant multiple of $|t|$ as $|t|\rightarrow\infty.$ Our main theorems establish local and global versions of the stability of twisting. An interesting problem is to study the abundance of twisting flows among all incompressible flows. It is easy to see that twisting velocity fields are dense in class of incompressible velocity fields since the perturbations can be non-autonomous. Theorem \ref{twistingthm} shows that \textit{every} velocity field in an $L^1$ neighborhood of an autonomous twisting velocity field is twisting. 


\subsection*{Stability of Twisting Theorems}

  Under appropriate assumptions about proximity to twist maps, local or global, we can now state our stability theorems. We state the theorems on annular domains in $\mathbb{R}^2$ and extensions to other domains are given in \S \ref{sect:ext}. Henceforth, we fix an annular domain $M\subset\mathbb{R}^2.$
\\

\vspace{-2mm}
\noindent \textbf{Assumption 1.}  An autonomous and smooth stream function $\psi_*$ is said to satisfy Assumption 1 on an open set $\mathsf{A}\subset M$ if it has two level sets contained in $\mathsf{A}$ that are non-contractible and have distinct travel times \eqref{traveltime}. 
\medskip 

To state our theorem, we introduce a distance: For a given smooth $\psi:M\times\mathbb{R}\rightarrow\mathbb{R}$, we define
\[\|\psi-\psi_*\|_{\mathsf{A}}:=\sup_{T\in\mathbb{R}}\Big|\frac{1}{T}\int_0^T\|\psi(t)-\psi_*\|_{L^1(\mathsf{A})}\rmd t+\frac{1}{T}\int_0^T\|\nabla^\perp\psi(t)\cdot\nabla\psi_*\|_{L^1(\mathsf{A})}\rmd t\Big|. \]

\begin{theorem}[Local Stability of Average Twisting]\label{twistingthm}
Assume that $\psi_*$ satisfies {\rm \textbf{Assumption 1}} on $\mathsf{A}\subset M.$ Assume that $\psi$ is smooth on $M$ and $u=\nabla^\perp\psi$ is uniformly bounded (in time). Then, if $\|\psi-\psi_*\|_{\mathsf{A}}$ is sufficiently small depending only on $\psi_*|_{\mathsf{A}}$, the Lagrangian flow $\Phi(\cdot,t)$ associated to $\psi$ satisfies 
\begin{itemize}
    \item For any lift $\widetilde{\Phi}(\cdot, t)$ of the flow  to the universal cover $\widetilde{M}\cong \mathbb{R}\times [0,1]$, the diameter of $\widetilde{\Phi}(M,t)\subset \widetilde{M}$ grows linearly in time.
    \item $\|\nabla\Phi(\cdot, t)\|_{L^1}$ diverges linearly as $|t|\rightarrow\infty.$
\end{itemize}
\end{theorem}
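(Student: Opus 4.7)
The plan is to lift everything to the universal cover $\widetilde{M}\cong\mathbb{R}\times[0,1]$ and exploit the Hamiltonian structure via action--angle coordinates. On the sub-annulus of $\mathsf{A}$ foliated by non-contractible periodic orbits of $\psi_*$, choose coordinates $(\theta,I)$ with $\theta\in\mathbb{R}/2\pi\mathbb{Z}$, $I$ the action, and $\psi_*=V(I)$; Assumption~1 provides levels $c_0\neq c_1$ with $V'(c_0)\neq V'(c_1)$. Hamilton's equations for the lifted non-autonomous flow read $\dot{\widetilde{\theta}}=V'(I)+\partial_I(\psi-\psi_*)$ and $\dot I=-\partial_\theta(\psi-\psi_*)$, using that $\partial_\theta\psi_*\equiv 0$. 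Since $\{\psi_*,\psi\}=-V'(I)\,\partial_\theta(\psi-\psi_*)$ and $V'$ is bounded away from zero on the relevant strip, the time-averaged $L^1$ smallness of $\{\psi_*,\psi\}$ built into $\|\psi-\psi_*\|_{\mathsf{A}}$ transfers directly to $\partial_\theta(\psi-\psi_*)$.

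From this, integrating $\dot I$ along trajectories and using area preservation of $\Phi_t$ yields the action-conservation estimate
\[
\int_{\mathsf{A}}|I(\Phi_T(x))-I(x)|\,\mathrm{d}x \;\le\; \int_0^T \|\partial_\theta(\psi-\psi_*)(\cdot,t)\|_{L^1(\mathsf{A})}\,\mathrm{d}t \;\le\; C\,T\,\|\psi-\psi_*\|_{\mathsf{A}}.
\]
I then introduce the central twisting functional, for a smooth nonnegative weight $\chi$ compactly supported in a small neighborhood of a level $c\in\{c_0,c_1\}$:
\[
Q_\chi(T):=\int_{\mathsf{A}}\chi(I(x))\,\bigl[\widetilde{\theta}(\Phi_T(x))-\theta(x)\bigr]\,\mathrm{d}x.
\]
Differentiating in $T$, changing variables $y=\Phi_T(x)$, replacing $\chi(I(\Phi_T^{-1}(y)))$ by $\chi(I(y))$ modulo the action-drift error above, and then integrating by parts in $I$, rewrites
\[
Q_\chi'(T)\;=\;\int \chi(I)\,V'(I)\,\mathrm{d}y \;-\;\int \chi'(I)\,(\psi-\psi_*)(\cdot,T)\,\mathrm{d}y\;+\;R(T),
\]
where the IBP piece is $L^1$-small in time-average by definition of the norm, and $R(T)$ satisfies $|R(T)|\lesssim \|\chi'\|_\infty\|\partial_I\psi\|_\infty\,T\,\|\psi-\psi_*\|_{\mathsf{A}}$.

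Integrating in time, applying this with weights $\chi_0,\chi_1$ supported near $c_0,c_1$, and using that $V'(c_0)\neq V'(c_1)$, the two $\chi_i$-weighted average winding numbers grow at distinct linear rates. A pigeonhole then produces particles $x_0\in\mathrm{supp}\,\chi_0$ and $x_1\in\mathrm{supp}\,\chi_1$ whose winding numbers differ by $\bigl(|V'(c_0)-V'(c_1)|-o(1)\bigr)T$, giving the linear lower bound on the diameter of $\widetilde{\Phi_T}(M)$. The gradient estimate follows from the winding estimate applied pointwise: the image of a non-contractible loop $\{I=c\}$ must have horizontal extent $\gtrsim T$ in the universal cover, hence arc length $\gtrsim T$, so integrating $\|\partial_\theta\Phi_T(\cdot,c)\|_{L^1}\ge \mathrm{length}(\Phi_T(\{I=c\}))$ over $c$ gives $\|\nabla\Phi_T\|_{L^1}\gtrsim T$.

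The chief obstacle is the quadratic-in-$T$ remainder arising from $\int_0^T R(t)\,\mathrm{d}t$: the action-drift bound grows linearly per unit time, so a purely perturbative estimate only closes the argument up to a Nekhoroshev-like time $T\sim\|\psi-\psi_*\|_{\mathsf{A}}^{-1}$. Extending linear growth to all $T$ should require a topological, non-perturbative bootstrap: once $\widetilde{\Phi_T}(M)$ has spread into a long thin strip in the universal cover, area preservation prevents a small subsequent perturbation from compressing it back, which should allow one to restart the twisting-functional argument at later times with comparable constants. Making this precise is expected to be the content of the localized Calabi-type ``key lemma'' alluded to in the introduction.
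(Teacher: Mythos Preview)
You have correctly identified the central obstacle: with your approach, the action--drift error integrates to a $O(T^2\|\psi-\psi_*\|_{\mathsf A})$ remainder, which swamps the linear main term for $T\gtrsim\|\psi-\psi_*\|_{\mathsf A}^{-1}$. However, your proposed resolution (a bootstrap that ``restarts'' the argument once the image has spread) is not how the paper closes this gap, and as stated your sketch does not actually close it.

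The paper avoids the quadratic remainder altogether by two changes. First, the twisting functional carries the weight at the \emph{final} time rather than the initial time: in shear coordinates it is
\[
\int_M (\Phi_1(x,t)-x_1)\,F(\Phi_2(x,t))\,\rmd x,
\]
so no action--drift comparison $\chi(I(\Phi_T^{-1}(y)))\approx\chi(I(y))$ is ever needed. Differentiating produces a good term $\int_M u_1(x,t)F(x_2)\,\rmd x$ and a dangerous term $\int_{\Phi(M,t)} u_2(x,t)\,x_1\,F'(x_2)\,\rmd x$, where now the explicit factor $x_1$ can be as large as $\mathrm{diam}\,\widetilde{\Phi_t}(M)\sim t$. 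Second---and this is the genuinely nontrivial step you are missing---this dangerous term is bounded \emph{uniformly in $t$} by a direct topological computation, not a perturbative one. Writing $u_2=\partial_1\tilde\psi$ with $\tilde\psi$ mean-zero in $x_1$, one has $u_2\,x_1=\partial_1(x_1\tilde\psi)-\tilde\psi$; integrating the total derivative over $\Phi(M,t)$ reduces to a sum over the (generically odd number of) crossings of the left and right images of $\partial M$ with each horizontal line $\{x_2=c\}$. Because these two boundary curves are $2\pi$-translates of one another in the cover, the crossings pair up so that the large $x_1$-factors cancel exactly, leaving only an alternating sum of values of $\tilde\psi$ bounded by its $BV$ norm in $x_1$, i.e.\ by $\|\partial_1\psi\|_{L^1}$. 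This yields $|\mathsf R_F(t)|\le 2\pi\|F'\|_{L^\infty}\int_0^t\|u_2(s)\|_{L^1}\,\rmd s$, which is linear in $t$ with coefficient $O(\|\psi-\psi_*\|_{\mathsf A})$, and the argument closes for all $T$.

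Your gradient-growth step also needs repair: from two \emph{points} $x_0,x_1$ with different winding you cannot conclude that the image of a fixed loop $\{I=c\}$ has length $\gtrsim T$ (indeed each such loop could remain short while the twisting is absorbed transversally). The paper instead extracts positive-measure sets $\mathsf A(t),\mathsf B(t)$ of particles with $\Phi_2$ localized near $y_1,y_2$ and $\Phi_1$ separated by $\gtrsim T$ on the cover, and then integrates $|\partial_2\Phi_1|$ along vertical segments joining them to obtain $\|\nabla\Phi_t\|_{L^1}\gtrsim T$.
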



	\begin{figure}[h!]
		\centering
		\includegraphics[]{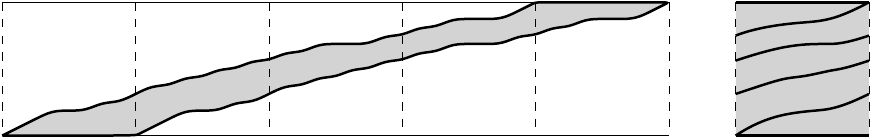}  
		\caption{Evolution of $\widetilde{\Phi}(M,t)$ (colored region) under a perturbed shear} 
  \label{fig:shear2}
	\end{figure}

\begin{remark}
    It is important to emphasize that this result is completely local. Beyond the qualitative assumption of smoothness, nothing is assumed about $\psi_*$ or $\psi$ outside of the region $\mathsf{A}$ (in fact, continuity of the flow map is all that is needed). Only the second statement requires the uniform boundedness of $u=\nabla^\perp \psi$ in space-time. 
\end{remark}

\begin{remark}
While the Theorem applies to the case $M=\mathbb{T}\times [0,1],$ it \textit{does not hold} on the domain $\mathbb{R}\times[0,1]$ itself; periodicity in $x_1$ is crucial.
\end{remark}

We will now state a global stability theorem on twisting for the lifted dynamics.
\medskip 

\noindent {\rm\textbf{Assumption 2.}} An autonomous and smooth stream function $\psi_*$ on $M$ is said to satisfy Assumption 2 if all its level sets are non-contractible loops that foliate $M$.
\medskip 

To state our second theorem, we introduce another distance: 
given $\psi:M\times\mathbb{R}\rightarrow \mathbb{R}$, define
\[\|\psi-\psi_*\|_{2}:=\sup_{T\in\mathbb{R}}\Big|\frac{1}{T}\int_0^T\|\psi(t)-\psi_*\|_{L^1(M)}\rmd t+\frac{1}{T}\int_0^T\|\nabla^\perp\psi(t)-\nabla^\perp\psi_*\|_{L^2(M)}\rmd t\Big|.\]

\begin{theorem}[Global Stability of Twisting]\label{twistingthm2}
Assume $\psi_*$ satisfies {\rm \textbf{Assumption 2}}. Let $u=\nabla^\perp\psi$ be any smooth velocity field on $M$ and $\Phi(t)$ its corresponding flow map, and set $\|\psi-\psi_*\|_{2}=\ve.$ Then, there exists a $C$ independent of $\ve$ and $t$ such that
\be\label{thetaclose}
\|\theta(\Phi(\cdot, t))- \theta_0 - \tfrac{2\pi}{\mu_*(\Phi(\cdot, t))} t\|_{L^2}  \leq C  \sqrt{\ve} t \qquad  \forall t\in \mathbb{R},
\ee
where $\theta$ is the lift to the universal cover of the annulus of the angular coordinate defined by \eqref{thetadef}.
In the case of a shear flow $u_*=(v_*,0)$ on the periodic channel $M=\mathbb{T}\times [0,1]$, the bound \eqref{thetaclose} becomes
\be\label{Phi1Close}
\|\widetilde{\Phi}_1(x,t)- x_1 - v_*(\widetilde{\Phi}_2(x,t)) t\|_{L^2}  \leq( \sqrt{2\pi\|v_*'\|_{L^\infty}} + \tfrac{1}{2} \sqrt{\ve})\sqrt{\ve} t \qquad  \forall t\in \mathbb{R},
\ee
where $\widetilde{\Phi}(\cdot,t)$ is the lift of the flow to the universal cover $\widetilde{M}= \mathbb{R}\times [0,1]$.
\end{theorem}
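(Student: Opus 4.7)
The plan is to reduce the estimate to an explicit computation on the deviation $G(t,x) := \Phi_1(t,x) - x_1 - v_*(\Phi_2(t,x))\, t$, whose $L^2(M)$ norm is what \eqref{Phi1Close} bounds. Setting $\eta := \psi - \psi_*$ and $\delta := u - u_* = \nabla^\perp \eta$, a direct time-differentiation using $u_* = (v_*, 0)$ yields
\[
\frac{d}{dt} G(t,x) \;=\; \delta_1(\Phi_t(x), t) \;-\; t\, v_*'(\Phi_2(t,x))\, \delta_2(\Phi_t(x), t).
\]
Since $G(0) = 0$, the antiderivative gives $G(t)$ as the sum of two time-integral contributions, which I would handle separately.

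The first contribution $\int_0^t \delta_1(\Phi_s, s)\,ds$ is controlled by Minkowski together with the measure-preservation of $\Phi_s$: its $L^2$ norm is at most $\int_0^t \|\delta(s)\|_{L^2(M)}\,ds \leq \varepsilon t$, producing the $\tfrac{1}{2}\varepsilon t$ summand on the right-hand side of \eqref{Phi1Close}. The second contribution $\int_0^t s\, v_*'(\Phi_2(s))\, \delta_2(\Phi_s, s)\,ds$ is the delicate one, since the factor $s$ would push a naive Minkowski bound to the order $\varepsilon t^2$, which is too weak once $t \gtrsim 1/\sqrt{\varepsilon}$. The key observation is that $\delta_2 = \partial_{x_1}\eta$ is a spatial derivative, so after the area-preserving change of variables $y = \Phi_s(x)$---under which $v_*'(\Phi_2(s,x))$ becomes the $y_1$-independent function $v_*'(y_2)$---one may integrate by parts in $y_1$ to move the derivative off $\eta$. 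Pairing the resulting expression against $G$ in an $L^2$-energy estimate for $\|G(t)\|_{L^2}^2$ lets $\eta$ enter only through its $L^1$-norm, which is precisely the first piece of the composite hypothesis $\|\psi - \psi_*\|_2 = \varepsilon$. Closing the resulting quadratic inequality for $\|G(t)\|_{L^2}^2$ then produces $\|G(t)\|_{L^2} \leq C \sqrt{\varepsilon}\, t$, with the factor $\sqrt{2\pi\|v_*'\|_{L^\infty}}$ arising from pairing $\|v_*'\|_{L^\infty}$ against a constant in $L^2(M)$ via Cauchy--Schwarz, using $|M|^{1/2} = \sqrt{2\pi}$. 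The general bound \eqref{thetaclose} follows by the same strategy after passing to action-angle coordinates $(\theta, I)$ adapted to $\psi_*$, in which $\Omega(I) := 2\pi/\mu_*(I)$ plays the role of $v_*$.

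The main obstacle is the combinatorial orchestration of the two halves of $\|\psi - \psi_*\|_2$: neither alone is sufficient. The $L^2$ bound on $\delta$ yields only $\varepsilon t^2$, while the $L^1$ bound on $\eta$ cannot see the derivative structure of $\delta_2$. The improvement from $\varepsilon t^2$ to $\sqrt{\varepsilon}\, t$ is a genuinely long-time phenomenon---it helps only once $t \gtrsim 1/\sqrt{\varepsilon}$---and demands balancing the two ingredients through integration by parts in the flow-transported coordinates. Uniform control on the spatial derivatives of $G \circ \Phi_s^{-1}$ is the technical heart of the argument, and there one exploits both the Hamiltonian identity $\delta \cdot \nabla \eta = 0$ (which prevents self-interaction of $\eta$ along the flow) and the measure-preservation of $\Phi_s$.
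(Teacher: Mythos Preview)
Your derivative computation for $G$ and the decision to run an energy estimate on $q(t):=\|G(t)\|_{L^2}^2$ are exactly what the paper does, and you correctly isolate the term $-2t\int_M G\,v_*'(\Phi_2)\,\delta_2(\Phi_t)\,\rmd x$ as the only obstruction. (Your exposition wavers between integrating $\dot G$ directly via Minkowski and running the energy estimate; only the latter can work here, and that is the setting in which your integration-by-parts step is meant.)

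The gap is the integration by parts itself. After the area-preserving change $y=\Phi_t(x)$, the delicate term reads $-2t\int_M G(\Phi_t^{-1}(y))\,v_*'(y_2)\,\partial_{y_1}\eta(y)\,\rmd y$, and integrating by parts in $y_1$ (no boundary, since $G$ and $\Phi_t^{-1}$ are periodic modulo the deck translation) produces
\[
2t\int_M v_*'(y_2)\,\eta(y)\,\bigl[1-\partial_{y_1}(\Phi_t^{-1})_1(y)\bigr]\,\rmd y.
\]
The factor $\partial_{y_1}(\Phi_t^{-1})_1$ is \emph{not} uniformly bounded: the hypothesis $\|\psi-\psi_*\|_2=\ve$ controls only $\eta\in L^1$ and $\nabla\eta\in L^2$, and says nothing about derivatives of the flow map. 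In fact the very twisting conclusion you are proving forces $\|\nabla\Phi_t\|_{L^1}$ to grow linearly, so your claimed ``uniform control on the spatial derivatives of $G\circ\Phi_s^{-1}$'' is precisely what fails. The identity $\delta\cdot\nabla\eta=\nabla^\perp\eta\cdot\nabla\eta=0$ is pointwise algebra and does not provide this control.

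The paper circumvents the obstruction by splitting $G=\Phi_1-x_1-t\,v_*(\Phi_2)$ \emph{inside} the delicate integral before changing variables. The $t\,v_*(\Phi_2)$ piece vanishes identically (it becomes $\int_M u_2(y)\cdot(\text{function of }y_2)\,\rmd y=0$); the $x_1$ piece is trivially bounded since $|x_1|\le 2\pi$ on the fundamental domain. The $\Phi_1$ piece, which corresponds to the unbounded factor $y_1$ on the cover, is handled by Corollary~\ref{isotopycor}: for \emph{any} lift $\tilde\varphi$ of an area-preserving diffeomorphism,
\[
\Bigl|\int_M(\partial_1 g)(\tilde\varphi(x))\,\tilde\varphi_1(x)\,f(\tilde\varphi_2(x))\,\rmd x\Bigr|\le |\mathbb{T}|\,\|f\|_{L^\infty}\,\|\partial_1 g\|_{L^1(\mathbb{T})},
\]
with a bound \emph{independent of the diffeomorphism}. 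Its proof (via Lemma~\ref{keylemma}) is topological: slice $\Phi_t(M)\subset\widetilde M$ by horizontal lines, count the signed intersections with the two boundary curves of the image of the fundamental domain, and use that consecutive endpoints must alternate between the two boundaries---which are $2\pi$-translates of one another---so the sum telescopes. This crossing argument on the universal cover is the missing ingredient in your proposal; it is what replaces any appeal to bounds on $\nabla\Phi_t^{-1}$.
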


We note that both theorems require very weak control on the perturbation, which facilitates our application to nonlinear equations.

\begin{remark}
It is important to point out that the smallness assumption in Theorem \ref{twistingthm} on $\|\nabla^\perp\psi_*\cdot\nabla\psi\|_{L^1}$ is \textit{necessary} even in the autonomous case. Indeed, one can imagine (as in Figure \ref{crosshighway}) a large \textit{transversal} perturbation to the Hamiltonian that would introduce an impenetrable obstacle and prevent winding, even if it is tangentially localized. A large perturbation that is transversally localized would not have the same effect (since particles could simply swerve around any ``obstacle"). 
    	\begin{figure}[h!]
		\centering
		\includegraphics[width=0.225\linewidth]{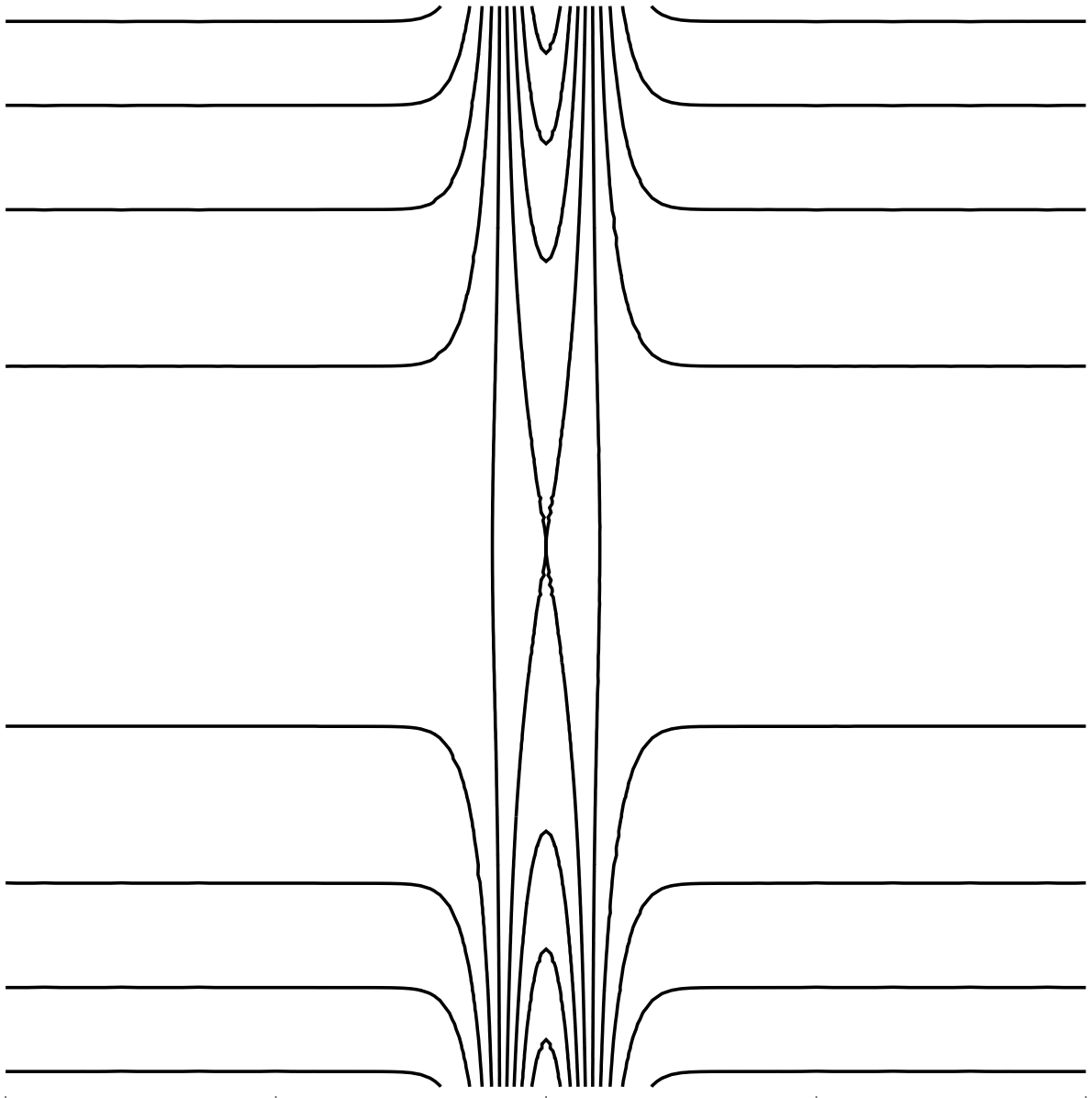}  \qquad  \qquad \qquad \includegraphics[width=0.32\linewidth]{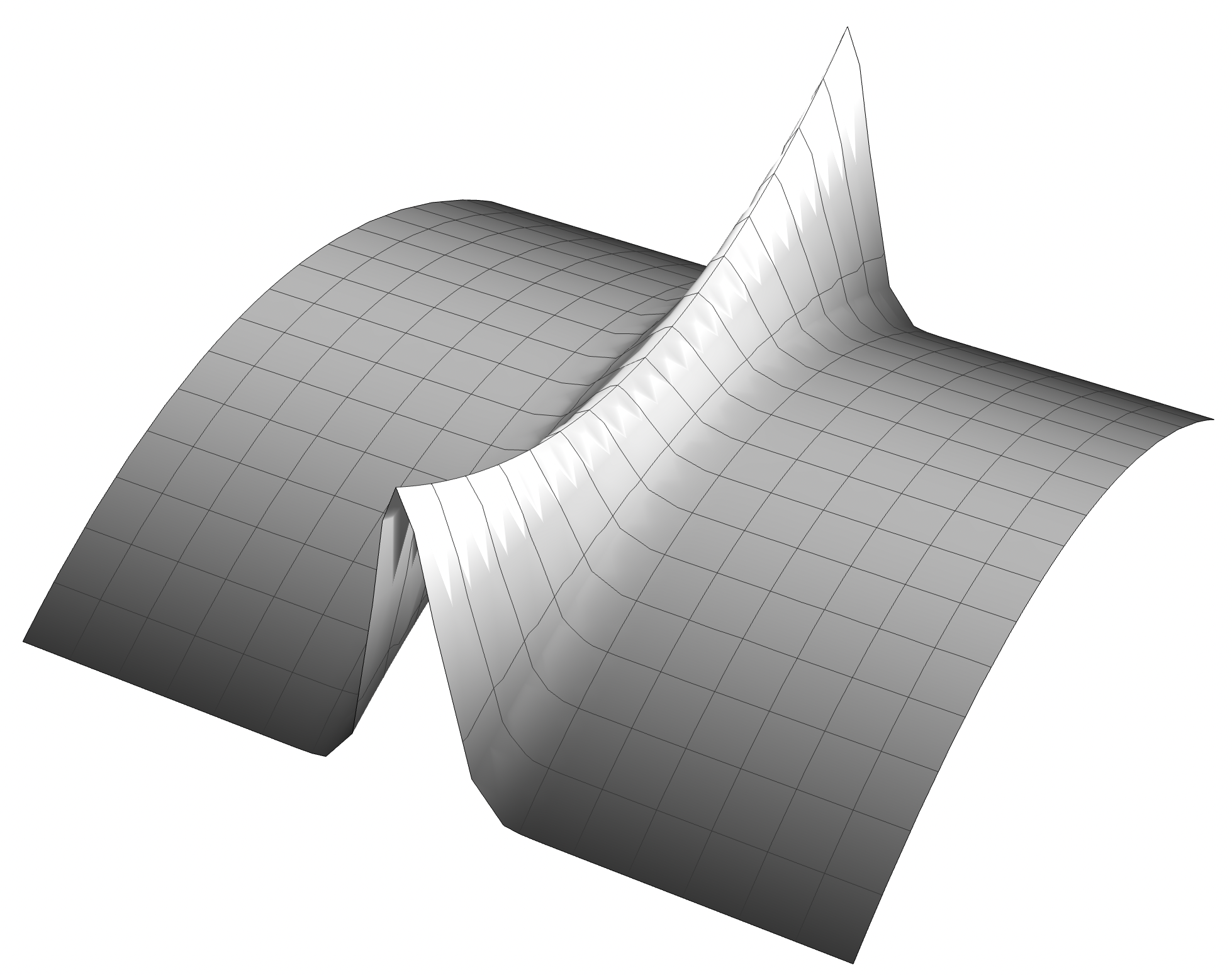}  
		\caption{Roadblocks in the highway by large perturbations; left is a contour plot, right is a landscape.} 
  \label{crosshighway}
	\end{figure}
	In addition, the bound \eqref{thetaclose} in Theorem \ref{twistingthm2} cannot hold in a pointwise sense, since single particles can invalidate the bound.  It is noteworthy that the $\sqrt{\ve}$ factor in \eqref{thetaclose} and \eqref{Phi1Close} is likely sharp for the $L^2$ estimate, while the corresponding $L^p$ estimate likely contains a factor of $\ve^{1/p}$ (with $\ve$ defined analogously). Getting the sharp bound in the case $p\in [1,2)$ appears to be more challenging than the case $[2,\infty)$. An interesting application related to the Arnold Diffusion is given in \S  \ref{arnolddiffsec}. 
\end{remark}

\subsection{Proof of the Stability of Twisting Theorem}

We prove Theorem \ref{twistingthm} for non-constant shear flow on $M=  \mathbb{T} \times [0,1]$.  This is because the computations are most transparent in this case, and the general setting follows a very similar argument. Suppose our shear profile is $u_*=(2\pi/\mu_*(x_2),0)$ where $\mu_*$ is the travel time defined by \eqref{traveltime}. Hereon we call $v_*(x_2):=2\pi/\mu_*(x_2)$ and name its stream function $\psi_*=\psi_*(x_2)$.  Since the shear is assumed non-constant, take any two values of $x_2$, say $y_1$ and $y_2$, such that $v_*(y_1)\neq v_*(y_2)$.  Without loss of generality, say that $v_*(y_1)>v_*(y_2)$. 

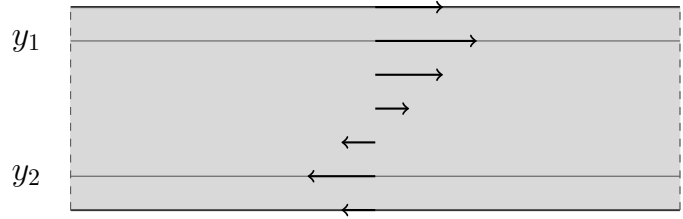
\begin{figure}[h!]
\centering
\begin{tikzpicture}[scale=0.9, every node/.style={transform shape}]
	\draw [thick] (-2.5,0)--(6.5,0);
	\draw [thick] (-2.5,3)--(6.5,3);	
	\draw[thin,
        postaction={decorate}] (-2.5,2.5)--(6.5,2.5);
	\draw[thin,  
        postaction={decorate}] (-2.5,0.5)--(6.5,0.5);
        \draw [dashed] (-2.5,0)--(-2.5,3);
        \draw [dashed] (6.5,0)--(6.5,3);
	\draw[fill, black!30!white, opacity=0.5] (-2.5,0)--(6.5,0)-- (6.5,3)--(-2.5,3)--cycle;
		               \draw[thick, black, ->] (2,.5)--(1,.5);
                  \draw[thick, black, ->] (2,1)--(1.5,1);
         \draw[thick, black, ->] (2,1.5)--(2.5,1.5);
         \draw[thick, black, ->] (2,2)--(3,2);
          \draw[thick, black, ->] (2,2.5)--(3.5,2.5);
          \draw[thick, black, ->] (2,3)--(3,3);
                         \draw[thick, black, ->] (2,0)--(1.5,0);                      
\draw  (-2.8,2.5) node[anchor=east] {\Large $y_1$};
\draw  (-2.8,0.5) node[anchor=east] {\Large $y_2$};
\end{tikzpicture}
  \caption{Cartoon of shear flow with two ``highways" localized near $y_1$ and $y_2$.}
  \label{fig:chan}
\end{figure}

We shall work on the universal cover of $M$ denoted by  $\widetilde{M}= \mathbb{R}\times [0,1]$.  We also consider the flowmap $\Phi(\cdot, t)$ (abusing notation by not writing $\widetilde{\Phi}(\cdot, t)$) as a periodic map on the universal cover 
\be\label{flowcond}
\Phi(x+2\pi e_1,t)= \Phi(x,t)+ 2\pi e_1.
\ee
We first establish the following key lemma that holds for general velocity fields on the channel:
\begin{lemma}\label{keylemma}
Let  $F:[0,1]\to \mathbb{R}$ be continuously differentiable and $u=\nabla^\perp \psi$.  Then we have:
\be
\int_{M} \Phi_1(x,t) F(\Phi_2(x,t))\rmd x     = \int_0^t \int_M u_1 (x_1,x_2,t)  F(x_2)\rmd x \rmd s + \mathsf{R}_F(t)
\ee
where the remainder $\mathsf{R}_F(t)$ obeys the bound
\be
|\mathsf{R}_F(t)| \leq 2\pi  \|F'\|_{L^\infty(0,1)} \int_0^t   \|u_2\|_{L^1(\mathbb{T}\times {\rm supp}(F))}\rmd s.
\ee
\end{lemma}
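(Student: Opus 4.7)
The plan is to differentiate $I(t):=\int_M (\Phi_1(x,t)-x_1)F(\Phi_2(x,t))\,\rmd x$ in $t$ using the flow equation and the chain rule, giving
\[
I'(t) = \int_M u_1(\Phi,t) F(\Phi_2)\,\rmd x + \int_M (\Phi_1-x_1) F'(\Phi_2) u_2(\Phi,t)\,\rmd x.
\]
By area preservation of $\Phi_t$, the change of variables $y=\Phi(x,t)$ turns the first integrand into $u_1(y,t)F(y_2)$, whose time integral is precisely the Eulerian main term on the right-hand side of the claimed identity. Hence $\mathsf{R}_F(t)=\int_0^t B(s)\,\rmd s$ with $B(s):=\int_M (\Phi_1-x_1)F'(\Phi_2)u_2(\Phi,s)\,\rmd x$. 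Applying the same change of variables at time $s$ rewrites
\[
B(s) = \int_M H(y,s)\, F'(y_2)\, u_2(y,s)\,\rmd y, \qquad H(y,s):=y_1-X_1(y,s),
\]
where $X=\Phi_s^{-1}$ with $X_1$ the lifted first coordinate on $\widetilde M$.

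Two structural facts are central to the bound. First, $u_2=\partial_{y_1}\psi$ has vanishing $y_1$-mean on $\mathbb{T}$ for every $y_2, s$, because $\psi$ is $2\pi$-periodic in $y_1$. Second, by equivariance of the lift, $X_1(y_1+2\pi,y_2,s)=X_1(y_1,y_2,s)+2\pi$, so $H(\cdot,y_2,s)$ is $2\pi$-periodic in $y_1$. The mean-zero property of $u_2$ lets me replace $H$ by $H-g(y_2,s)$ for any function $g$ without altering $B(s)$, while the periodicity of $H$ controls the oscillation of the residue.

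To extract the explicit factor of $2\pi$, I would decompose $H(y,s) = 2\pi W(y,s) + r(y,s)$ into the integer winding part $W\in\mathbb{Z}$ and the fractional part $r\in[0,2\pi)$. The fractional piece $\int_M r F'(y_2) u_2\,\rmd y$ is controlled directly by $|r|\leq 2\pi$, H\"older, and area preservation, producing $2\pi\|F'\|_{L^\infty(0,1)}\|u_2\|_{L^1(\mathbb{T}\times\mathrm{supp}(F))}$. The integer-winding contribution $2\pi\int_M W F'(y_2) u_2\,\rmd y$ must then be shown to vanish or be absorbed. A clean route is integration by parts in $y_1$ using $u_2=\partial_{y_1}\psi$, in which the boundary terms cancel by periodicity of $\psi$ and of $H$; the identity $\int_\mathbb{T}(1-\partial_{y_1}X_1)\,\rmd y_1=0$ coming from equivariance then lets me trade $\psi$ for $\psi-\bar\psi(y_2,s)$ where $\bar\psi:=\tfrac{1}{2\pi}\int_\mathbb{T}\psi\,\rmd y_1$, and the Poincar\'e-type bound $\|\psi-\bar\psi\|_{L^\infty_{y_1}}\leq \|u_2(\cdot,y_2,s)\|_{L^1_{y_1}}$ produces the desired pointwise-in-$s$ estimate on $B(s)$. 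Finally, integration in $s$ delivers the Lemma.

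The hard part is nailing down the clean $2\pi$ prefactor. Naive $L^\infty$ bounds on $H$ are not available uniformly (its $y_1$-oscillation may be unbounded), and the total variation of $X_1(\cdot,y_2,s)$ in $y_1$ is likewise not universally controlled; any single one of the two structural facts above is by itself inadequate. The correct argument must simultaneously exploit the $2\pi$-periodicity of $H$ (from the equivariant lift) and the mean-zero structure of $u_2$ (from the periodicity of the stream function), with the winding/fractional decomposition---or an equivalent Abel-summation-style reorganization of the integration-by-parts---acting as the bridge that isolates a single-period factor of $2\pi$.
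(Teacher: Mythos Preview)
Your setup is correct and matches the paper: differentiate $I(t)$, change variables in the first term to get the main Eulerian piece, and identify the remainder $B(s)=\int_M H(y,s)F'(y_2)u_2(y,s)\,\rmd y$ with $H=y_1-X_1$ periodic in $y_1$ and $u_2$ of zero $y_1$-mean. You also correctly isolate the obstacle: neither $\|H\|_{L^\infty_{y_1}}$ nor $\mathrm{TV}_{y_1}(H)=\int_\mathbb{T}|1-\partial_{y_1}X_1|\,\rmd y_1$ is uniformly bounded.

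The gap is that neither of your two proposed closures actually beats this obstacle. In your ``clean route'' you integrate by parts to get $-\int_\mathbb{T}(1-\partial_{y_1}X_1)(\psi-\bar\psi)\,\rmd y_1$, and then you need $\int_\mathbb{T}|1-\partial_{y_1}X_1|\,\rmd y_1\le C$; but $X_1(\cdot,y_2)$ is only the first component of a 2D area-preserving diffeomorphism restricted to a slice, so it need not be monotone, and its total variation is unbounded in general. Your winding decomposition $H=2\pi W+r$ fares no better: the fractional part uses up the entire $2\pi\|F'\|_{L^\infty}\|u_2\|_{L^1}$ budget, while the integer part $2\pi\int_\mathbb{T}W\,\partial_{y_1}\psi\,\rmd y_1=-2\pi\sum_j(\pm)\tilde\psi(q_j)$ (Stieltjes IBP against $\rmd W$) involves as many jump points $q_j$ as $H$ has crossings of $2\pi\mathbb{Z}$---again unbounded. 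So the ``bridge'' you gesture at in your last paragraph is precisely what is missing.

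The paper resolves this by \emph{not} pulling back to $M$. It keeps the term as $\int_{\Phi(M,t)}\partial_1(x_1\tilde\psi)F'(x_2)\,\rmd x$ over the image of the fundamental domain in the cover $\widetilde M$, and slices: $\Phi(M,t)\cap\{x_2=c\}=\bigcup_i[p_i^-,p_i^+]$. The crucial topological fact is that the endpoints lie on the two boundary curves $\Phi_t(\{x_1=0\})$ and $\Phi_t(\{x_1=2\pi\})$, which are exact $2\pi$-translates of each other; this pairs each endpoint $p$ with $p\pm2\pi$ carrying the \emph{opposite} orientation (left vs.\ right endpoint). That pairing collapses $\sum_i[p_i^+\tilde\psi(p_i^+)-p_i^-\tilde\psi(p_i^-)]$ to $2\pi\sum_i(-1)^i\tilde\psi(p_i)$, with the points now all coming from a \emph{single} boundary curve, so the alternating sum is controlled by $\|\tilde\psi\|_{L^\infty(\mathbb{T})}+\|\tilde\psi\|_{BV(\mathbb{T})}\lesssim\|u_2\|_{L^1(\mathbb{T})}$. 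This boundary-pairing on the cover is the missing idea; it replaces control of $\mathrm{TV}(H)$ by a purely combinatorial cancellation that your change-of-variables-to-$M$ framework cannot see.
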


\begin{proof}
We compute the evolution of the quantity of interest:
\begin{align}\label{firstterm}
\frac{\rmd}{\rmd t} \int_{M} \Phi_1(x,t) F(\Phi_2(x,t))\rmd x &=  \int_{M} u_1(\Phi(x,t),t)F(\Phi_2(x,t))\rmd x\\
&\qquad  +  \int_{M} u_2(\Phi(x,t),t)\Phi_1(x,t) F'(\Phi_2(x,t))\rmd x . \label{secondterm}
\end{align}
In the first term, since the flowmap satisfies \eqref{flowcond}, we have by incompressibility
\be
 \int_{M} u_1(\Phi(x,t),t)F(\Phi_2(x,t))\rmd x =  \int_{\Phi(M,t)} u_1(x_1,x_2,t)F(x_2)\rmd x_1\rmd x_2=  \int_{M} u_1(x_1,x_2,t)F(x_2)\rmd x_1\rmd x_2
\ee
where $\Phi(M,t)\subset \widetilde{M}$ is interpreted as a subset of the universal cover.
The second equality (returning to the fundamental domain $M$) follows from the fact that $u_1$ is $2\pi$--periodic in $x_1$. 

For the second term \eqref{secondterm}, write $u_2= \partial_1 \psi$ where $\psi:M\to \mathbb{R}$ is a periodic function in $x_1$. Then
\begin{align}
\int_{M} u_2(\Phi(x,t),t)&\Phi_1(x,t)F'(\Phi_2(x,t))\rmd x = \int_{\Phi(M,t)} u_2(x,t)x_1F'(x_2)\rmd x_1\rmd x_2\\
&= \int_{\Phi(M,t)} \partial_1  ( \psi (x,t)- f(x_2,t)) x_1F'(x_2)\rmd x_1\rmd x_2\\
&= \int_{\Phi(M,t)} \partial_1 (x_1 \tilde{\psi} (x,t))F'(x_2)\rmd x_1\rmd x_2- \int_{\Phi(M,t)} \tilde{\psi} (x,t)F'(x_2)\rmd x_1\rmd x_2\\
&= \int_{\Phi(M,t)} \partial_1 (x_1 \tilde{\psi} (x,t))F'(x_2)\rmd x_1\rmd x_2- \int_{M}  \tilde{\psi} (x,t)F'(x_2)\rmd x_1\rmd x_2
\end{align}
where $\tilde{\psi}:= \psi(x_1,x_2,t) - f(x_2,t)$  for an arbitrary $f$.  We choose
$f(x_2,t) = \frac{1}{2\pi} \int_\mathbb{T} \psi (x_1, x_2,t) \rmd x_{1}$.  In the last equality, we finally used $\tilde{\psi}$ is periodic a periodic function of $x_1$. Thus we arrive at the identity
  \begin{align}
\frac{\rmd}{\rmd t} \int_{M} \Phi_1(x,t)   F(\Phi_2(x,t))\rmd x &= \int_0^1 \left( \int_{\mathbb{T}} u_1(x_1,x_2,t) \rmd x_{1} \right)  F(x_2)\rmd x_2 \\
&\qquad  +  \int_{\Phi(M,t)} \partial_1 (x_1 \tilde{\psi} (x,t))F'(x_2)\rmd x_1\rmd x_2.
\end{align}
To analyze this final term, we note that it is a total derivative in $x_1$.  As such, we write:
\be
\widetilde{M} \supseteq \Phi(M,t) = \bigcup_{c\in[0,1]} \Phi(M,t)  \cap\{x_2=c\}:=\bigcup_{c\in[0,1]}\Phi_c(M,t).
\ee
Since $\Phi: M \to \widetilde{M}$ is a diffeomorphism, generically for some odd $N(t;c)\in \mathbb{N}$ we have 
\be
\Phi_c(M,t) = \bigcup_{i=1}^{N(t;c)} [p_i^-(t;c), p_i^+(t;c)]
\ee
 where $p_i^\pm(t;c)\in \mathbb{R}$ are points on the line $\{x_2=c\}$ (this may fail for a measure zero set of $c\in[0,1]$).  Let us see why we may take $N$ to be odd.
The number of crossing of any curve connecting the top and bottom boundaries with a horizontal are, in general (if the tangent at the crossing is non-zero), an odd number. 
  If there is a tangency (non-generic) that induces no crossing, we regard this degenerate case as an empty interval that we ignore.
  Thus we have that $N(t;c)$ is odd. 

Note that, since $M$ is an annulus, $\Phi(M,t)$ has a left and right (free) boundary as a subset of the cover.
The points $p_i^-(t;c), p_i^+(t;c)$ can come from the image of either boundary, in principle.  However, what we know is that they come ordered in the following way: there are always an odd number of crossings with one boundary before crossings with another boundary occur (since crossing an even number of times with one boundary puts you outside the image of the fundamental domain).  Thus, if you start at $p_1^-$ (a left endpoint) - the first blue which is hit, call it $p_i$ must be its image from the other boundary.  Moreover, this image must be the right endpoint of an interval.  The fact that the orientation as an endpoint changes is crucial. 
See, e.g. Figure \ref{fig:imagewrapcover}.

\begin{figure}[h!]
\centering
\begin{tikzpicture}[scale=.8, every node/.style={transform shape}]
		   \draw [thick] (-8,0)--(12,0);
		   \draw [thick] (-8,3)--(12,3);
		     \draw [dashed] (12,0)--(12,3);
		   \draw [dashed] (-8,0)--(-8,3);
		   \draw [dashed] (-4,0)--(-4,3);
      		   \draw [dashed,red] (0,0)--(0,3);
		   \draw [dashed,blue] (4,0)--(4,3);
              \draw [dashed] (8,0)--(8,3);
	           \draw[fill, black!30!white, opacity=0.5] (-0,0)--(4,0)-- (4,3)--(-0,3)--cycle;

   \draw [very thick] (-4.25,1)--(-3.63,1);
   \draw [very thick] (-1.9,1)--(-0.22,1);
   \draw [very thick] (0.4,1)--(2.1,1);

  \draw [red, dashed, very thick] plot [smooth, tension=0.5] coordinates {(-3.5,0) (-4.5,1.5) (-2.5,0.5) (-1,2)  (1,2.5)  (1.5,3) };
  \draw [blue, dashed, very thick] plot [smooth, tension=0.5] coordinates {(-3.5+4,0) (-4.5+4,1.5) (-2.5+4,0.5) (-1+4,2)  (1+4,2.5)  (1.5+4,3) };

\draw  (-4.4,1.1) node[anchor=east] {\Large $p_1^-$};
\draw  (-2.6,1.1) node[anchor=east] {\Large $p_1^+$};
\draw  (-1.8,1.1) node[anchor=east] {\Large $p_2^-$};
\draw  (0,0.5) node[anchor=east] {\Large $p_2^+$};
\draw  (1,1.5) node[anchor=east] {\Large $p_3^-$};
\draw  (2.5,1.5) node[anchor=east] {\Large $p_3^+$};

  \draw  (-0.5,2.5) node[anchor=east] {\Large $\Phi(\{x_1=0\},t)$};
  \draw  (5,2.5) node[anchor=east] {\Large $\Phi(\{x_1=2\pi\},t)$};
  
\end{tikzpicture}
\hspace{-.022\textwidth}		
\caption{Covering space $\widetilde{M}=\mathbb{R}\times [0,1]$ of the cylinder ${M}=\mathbb{T}\times [0,1]$ . The fundamental domain $M$ in gray and the left/right boundary of its time $t$ image $\Phi(M,t)$ are red/blue dashed curves.}
\label{fig:imagewrapcover}
\end{figure}
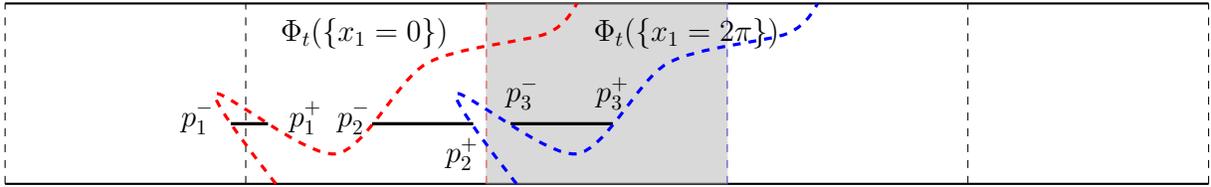
 \begin{align}
\int_{\Phi(M,t)} &\partial_1 (\tilde{\psi} (x,t)x_1)F'(x_2)\rmd x_1\rmd x_2= \int_0^1 \int_{\Phi_c(M,t)} \partial_1 (\tilde{\psi} (x,t)x_1)F'(c)\rmd x_1\rmd c\\
 &= \int_0^1 F'(c)\sum_{i=1}^{N(t;c)} \left[\tilde{\psi} (p_i^+(t;c),t) p_i^+(t;c)-\tilde{\psi} (p_i^-(t;c),t) p_i^-(t;c)\right]\rmd c\\
  &=   \int_0^1F'(c)\sum_{i=1}^{ N(t;c)}\left[ \tilde{\psi} (p_i^\pm(t;c),t)p_i^\pm(t;c)-  \tilde{\psi} (p_{i_*}^\mp(t;c),t) p_{i_*}^\mp(t;c)\right]\rmd c \\
  &=  2\pi \int_0^1F'(c)\sum_{i=1}^{ N(t;c)}(-1)^i\tilde{\psi} (p_i^\pm(t;c),t)\rmd c 
\end{align}
where $i_*$ is such that $p_i^\pm(t;c)-  p_{i_*}^\mp(t;c)=2\pi$ (the corresponding points on the boundaries.  In the above, 
we used that $\tilde{\psi} (p_i^+(t;c),t)=\tilde{\psi} (p_{i_*}^-(t;c),t)$ due to periodicity of $\psi$ on $M$.  Thus, we have
\be
\mathsf{R}_F(t):= 2\pi \int_0^t \int_0^1F'(c)\sum_{i=1}^{ N(s;c)}(-1)^i\tilde{\psi} (p_i^\pm(s;c),s)\rmd c \rmd s.
\ee
Whence, noting that $\int_\mathbb{T} \tilde{\psi}(x_1,x_2,t)\rmd x_1=0$, we have
\begin{align}
|\mathsf{R}_F(t)| &\leq 2\pi  \|F'\|_{L^\infty(0,1)} \int_0^t \left( \|\tilde{\psi}\|_{L^1(0,1;L^\infty(\mathbb{T}))} + \|\tilde{\psi}\|_{L^1(0,1;BV(\mathbb{T}))} \right)\rmd s\\ \label{Rfbound}
&\qquad  \leq  2\pi  \|F'\|_{L^\infty(0,1)} \int_0^t  \|u_2\|_{L^1(0,1;L^{1}(\mathbb{T}))}\rmd s.  
\end{align} This finishes the proof. 
\end{proof}

In fact, it can be seen from the above proof that we have the following:
\begin{corollary}\label{isotopycor}
    Let $\tilde{\varphi}$ be any lift (defined by an arbitrary isotopy to the identity) of an area-preserving diffeomorphism $\varphi$ of $M=\mathbb{T}\times [0,1]$ in the component of the identity, $g:M\to \mathbb{R}$ and $f:[0,1]\to \mathbb{R}$. Then
\be
    \left|\int_{M} (\partial_1 g)(\tilde{\varphi}(x))\tilde{\varphi}_1(x)f(\tilde{\varphi}_2(x))\rmd x \right| \leq |\mathbb{T}| \|f\|_{L^\infty(0,1)} \|\partial_1 g\|_{L^1(\mathbb{T})}.
\ee
In particular, the bound is \textit{independent} of the diffeomorphism $\varphi$.
\end{corollary}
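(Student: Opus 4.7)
The main observation is that the quantity being bounded is exactly the ``second term'' \eqref{secondterm} that drove the proof of Lemma~\ref{keylemma}, evaluated at a single fixed instant rather than integrated along an isotopy. Under the identification $\tilde\varphi \leftrightarrow \Phi(\cdot,t)$, $g \leftrightarrow \psi(\cdot,t)$, and $f \leftrightarrow F'$, the integral in the Corollary agrees with $\int_M u_2(\Phi(x,t),t)\Phi_1(x,t)F'(\Phi_2(x,t))\rmd x$, and the Lemma's argument already produced a bound for this object that depends only on mean-zero, $L^1$-type norms of $\psi$ and not on the flow. My plan is therefore to replay the computation for a single area-preserving diffeomorphism $\tilde\varphi$ and read off the $\tilde\varphi$-independent bound.

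\textbf{Key steps.} First I would change variables $y = \tilde\varphi(x)$, using area-preservation, to rewrite the integral as $\int_{\tilde\varphi(M)} (\partial_1 g)(y)\,y_1 f(y_2)\,\rmd y$ over the fundamental domain $\tilde\varphi(M)\subset \widetilde{M} = \mathbb{R}\times[0,1]$. Next I would subtract the $y_1$-mean of $g$ by setting $\tilde g(y) := g(y) - \frac{1}{|\mathbb{T}|}\int_{\mathbb{T}} g(\cdot,y_2)\,\rmd y_1$, so that $\tilde g$ is $y_1$-periodic and zero-mean in $y_1$ while still $\partial_1 g = \partial_1\tilde g$. The product rule $y_1\partial_1\tilde g = \partial_1(y_1\tilde g) - \tilde g$ splits the integral: the $\tilde g$-piece equals $\int_{\tilde\varphi(M)}\tilde g(y)f(y_2)\,\rmd y = \int_M \tilde g(y)f(y_2)\,\rmd y = 0$ by $y_1$-periodicity of the integrand (so $\tilde\varphi(M)$ may be replaced by $M$) and zero $y_1$-mean of $\tilde g$. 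The $\partial_1$-piece is evaluated by slicing at $y_2=c$ and invoking the topological pairing argument of Lemma~\ref{keylemma}: the $2N(c)$ boundary endpoints of $\tilde\varphi(M)\cap\{y_2=c\}$ come in $N(c)$ pairs differing by exactly $2\pi$ with opposite left/right orientations, which gives
\[
\int_M (\partial_1 g)(\tilde\varphi)\tilde\varphi_1 f(\tilde\varphi_2)\rmd x = 2\pi\int_0^1 f(c)\sum_{i=1}^{N(c)}(-1)^i\tilde g(\bar q_i(c),c)\,\rmd c,
\]
where $\bar q_i(c)\in \mathbb{T}$ are the $N(c)$ cyclically ordered projections of the slice endpoints.

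\textbf{Main obstacle and closing the bound.} The key step — and the reason the estimate is uniform in $\tilde\varphi$ — is an Abel summation bound on the alternating sum. Pairing consecutive terms along the cyclic ordering on $\mathbb{T}$ gives
\[
\big|\tilde g(\bar q_{2j},c) - \tilde g(\bar q_{2j-1},c)\big| \leq \int_{\bar q_{2j-1}}^{\bar q_{2j}} |\partial_1 g(y_1,c)|\,\rmd y_1
\]
on disjoint arcs, summing to at most $\|\partial_1 g(\cdot,c)\|_{L^1(\mathbb{T})}$; when $N(c)$ is odd, the isolated leftover term is controlled by the Poincaré-type bound $\|\tilde g(\cdot,c)\|_{L^\infty(\mathbb{T})}\leq \tfrac{1}{2}\|\partial_1 g(\cdot,c)\|_{L^1(\mathbb{T})}$. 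Integrating in $c$ against $|f(c)|$ and collecting constants yields the stated bound, independent of $N(c)$ and $\tilde\varphi$. The substantive obstacle — the topological pairing of boundary endpoints — is already handled in the proof of Lemma~\ref{keylemma}, so here it is essentially invoked. Finally, I would observe that the quantity is manifestly independent of the chosen lift: replacing $\tilde\varphi$ by $\tilde\varphi + 2\pi n e_1$ alters the integral by $2\pi n\int_M (\partial_1 g)(\tilde\varphi)f(\tilde\varphi_2)\rmd x = 2\pi n\int_M \partial_1 g(y)f(y_2)\rmd y = 0$, since $g$ is $y_1$-periodic and therefore $\partial_1 g$ has zero $y_1$-mean, so the asserted bound holds for any isotopy-defined lift.
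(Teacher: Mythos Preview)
Your proposal is correct and follows essentially the same route as the paper: the Corollary is stated immediately after Lemma~\ref{keylemma} with the remark that it ``can be seen from the above proof,'' and what you do is precisely replay that proof at a frozen time with the substitutions $\Phi(\cdot,t)\leftrightarrow\tilde\varphi$, $\psi\leftrightarrow g$, $F'\leftrightarrow f$ --- change of variables, subtraction of the $x_1$-mean, product-rule splitting, the topological pairing of endpoints, and the final $L^\infty+BV$ (equivalently Abel/disjoint-arcs) bound on the alternating sum. Your added remark on lift-independence is a nice complement and is consistent with the paper's phrasing ``any lift.''
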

Note that, naively one might expect that the bound should scale with ${\rm diam}(\tilde{\varphi}(M))$, the diameter image of the domain in the universal cover.  However, as our proof shows, there is a certain invariance at play resulting from strong topological constraints which is reflected in the isotopy independence of the upper bound.

\begin{proof}[Proof of Theorem \ref{twistingthm}]
Theorem \ref{twistingthm} will now follow from Lemma \eqref{keylemma} by choosing two functions $F,G:[0,1]\to \mathbb{R}$ with unit mass, localized to distinct ``highways". 
\be
\int_{M} (\Phi_1(x,t)-x)  F(\Phi_2(x,t))\rmd x    = \int_0^t \int_M u_1 (x_1,x_2,t)  F(x_2)\rmd x \rmd s + \mathsf{R}_F(t),
\ee
\be
\int_{M} (\Phi_1(x,t)-x) G(\Phi_2(x,t))\rmd x    = \int_0^t \int_M u_1 (x_1,x_2,t)  G(x_2)\rmd x \rmd s + \mathsf{R}_G(t),
\ee
whence
\begin{align*}
\int_{M} &\Phi_1(x,t) F(\Phi_2(x,t))\rmd x  -\int_{M} \Phi_1(x,t) G(\Phi_2(x,t))\rmd x   \\
&= \int_0^t \int_M u_1 (x_1,x_2,t)  F(x_2)\rmd x \rmd s - \int_0^t \int_M u_1 (x_1,x_2,t)  G(x_2)\rmd x \rmd s + \mathsf{R}_F(t)- \mathsf{R}_G(t).
\end{align*}
If  $\psi_*:= \psi_*(x_2)$ is a non-isochronal autonomous stream function of a shear flow on $M$ and
\be\label{psicondshear}
\frac{1}{T}\int_0^T \|\psi(t)- \psi_*\|_{L^{1}(M)} \rmd t  + \frac{1}{T}\int_0^T \|\partial_1 \psi\|_{L^{1}(M)} \rmd t\leq \ve  \qquad \forall T\in \mathbb{R},
\ee 
it follows that the remainders are bounded by
\be
|\mathsf{R}_F(t)- \mathsf{R}_G(t)|\leq 2\pi  \ve ( \|F'\|_{L^\infty(0,1)}+ \|G'\|_{L^\infty(0,1)} ) t,
\ee
while the main terms are
\begin{align}
\left|\int_0^t \int_M u_1 (x_1,x_2,t)  G(x_2)\rmd x \rmd s - \int_0^t \int_0^1 v_* (x_2,t)  G(x_2)\rmd x_2 \rmd s \right|&=\\
\left|\int_0^t \int_M \psi (x_1,x_2,t)  G'(x_2)\rmd x \rmd s - \int_0^t \int_0^1 \psi_* (x_2,t)  G'(x_2)\rmd x_2 \rmd s \right| &\leq \ve \|G'\|_{L^\infty(0,1)} t
\end{align}
where we used that $F$ and $G$ are compactly supported.  It they are furthermore localized to $y_1\neq y_2$, then we have
\be
\left|v(y_1)t- \int_0^t \int_0^1 \psi_* (x_2,t)  F'(x_2)\rmd x_2 \rmd s \right| \leq \ve t,\qquad \left|v(y_2)t- \int_0^t \int_0^1 \psi_* (x_2,t)  G'(x_2)\rmd x_2 \rmd s \right| \leq \ve t,
\ee
where $v_* = -\psi_*'$.
Thus, we find
\begin{align*}
\int_{M} \Phi_1(x,t) F(\Phi_2(x,t))\rmd x  -\int_{M} \Phi_1(x,t) G(\Phi_2(x,t))\rmd x  &= (v_*(y_1) -v_*(y_2)) t +  \mathsf{Rem}(t)
\end{align*}
where $|  \mathsf{Rem}(t)|\leq C\ve t$.  Choosing $\ve$ sufficiently small yields 
\be\label{growtht}
\int_{M} \Phi_1(x,t) F(\Phi_2(x,t))\rmd x  -\int_{M} \Phi_1(x,t) G(\Phi_2(x,t))\rmd x \sim (v_*(y_1) -v_*(y_2))  t
\ee
while, at the same time, each term grows individually:
\be\label{growtht2}
\int_{M} \Phi_1(x,t) F(\Phi_2(x,t))\rmd x  \sim v_*(y_1) t \qquad \int_{M} \Phi_1(x,t) G(\Phi_2(x,t))\rmd x  \sim v_*(y_2) t.
\ee

	\begin{figure}  
		\centering
		\includegraphics{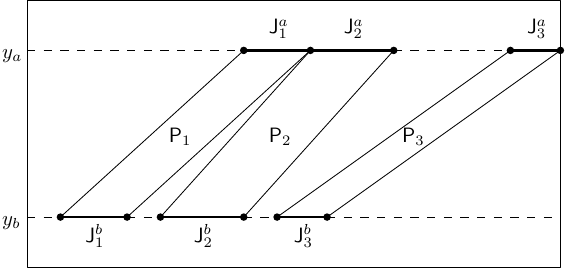}  
		\caption{Illustration for Lemma \ref{lem:flow-gradient-growth}. } \label{fig:gradient-L1}
	\end{figure}

This in turn implies  $L^1$ gradient growth:
\begin{lemma}\label{lem:flow-gradient-growth}
Let  $F,G:[0,1]\to \mathbb{R}$ be localized about $y_1\neq y_2$ respectively with non-overlapping supports.  Then if \eqref{growtht},\eqref{growtht2}  hold, then there is a constant $C>0$ such that $\|\nabla \Phi(\cdot,t)\|_{L^1} \geq C |t|$.
\end{lemma}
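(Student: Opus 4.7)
The plan is to reduce the claim to a lower bound on a single Jacobian entry, $\|\nabla\Phi(\cdot,t)\|_{L^1}\ge \|\partial_{x_2}\Phi_1(\cdot,t)\|_{L^1(M)}$, and to extract that bound from the hypothesis \eqref{growtht} by peeling off the boundary trace $\Phi_1^{(0)}(x_1):=\Phi_1(x_1,0,t)$. The decisive structural input is that, although $\Phi_1^{(0)}(0)$ itself can drift linearly in $t$, the oscillation of $\Phi_1^{(0)}$ on $[0,2\pi]$ is \emph{a priori} bounded by $2\pi$: no-penetration forces $u_2|_{x_2=0}=0$, so $\Phi_t|_{\{x_2=0\}}$ is an orientation-preserving circle diffeomorphism whose lift to $\mathbb{R}$ is monotonic and satisfies $\Phi_1^{(0)}(x_1+2\pi)=\Phi_1^{(0)}(x_1)+2\pi$ by \eqref{flowcond}.

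With this observation in hand I split
\begin{equation}
\int_M\Phi_1(F-G)(\Phi_2)\,\rmd x \;=\; \underbrace{\int_M(\Phi_1-\Phi_1^{(0)})(F-G)(\Phi_2)\,\rmd x}_{I(t)} \;+\; \underbrace{\int_M\Phi_1^{(0)}(x_1)(F-G)(\Phi_2)\,\rmd x}_{II(t)}.
\end{equation}
For the boundary-trace term $II(t)$, set $\mu_F(x_1):=\int_0^1 F(\Phi_2(x_1,x_2,t))\,\rmd x_2$ and likewise $\mu_G$; area preservation then yields $\int_0^{2\pi}\mu_F\,\rmd x_1=\int_M F(\Phi_2)\,\rmd x=2\pi\int_0^1 F=2\pi$ and similarly for $\mu_G$, so $\mu_F-\mu_G$ has mean zero on $[0,2\pi]$. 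Replacing $\Phi_1^{(0)}$ by $\Phi_1^{(0)}-c$ for $c$ the midrange of $\Phi_1^{(0)}$ and using $\|\mu_F-\mu_G\|_{L^1}\le 4\pi$, this gives $|II(t)|\le 4\pi^2$, a bound independent of $t$. For the principal term $I(t)$, the fundamental theorem of calculus along each vertical line gives $|\Phi_1(x_1,x_2,t)-\Phi_1^{(0)}(x_1)|\le\int_0^1|\partial_s\Phi_1(x_1,s,t)|\,\rmd s$, whence by Fubini $\|\Phi_1-\Phi_1^{(0)}\|_{L^1(M)}\le \|\partial_{x_2}\Phi_1\|_{L^1(M)}$ and therefore $|I(t)|\le(\|F\|_\infty+\|G\|_\infty)\|\partial_{x_2}\Phi_1\|_{L^1}$.

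Combining with \eqref{growtht}, which asserts $|I(t)+II(t)|\gtrsim|v_*(y_1)-v_*(y_2)|\,|t|$, immediately produces $\|\partial_{x_2}\Phi_1\|_{L^1}\ge C|t|$ for some $C>0$ depending only on $v_*(y_1)-v_*(y_2)$ and $\|F\|_\infty+\|G\|_\infty$, valid once $|t|$ exceeds a fixed threshold (the short-time case being absorbed by shrinking $C$). The principal obstacle addressed here is that a naive $L^\infty\cdot L^1$ bound on $II(t)$ would involve $\|\Phi_1^{(0)}\|_\infty$, which itself may grow like $|t|$ and would swamp the main estimate; the rescue is that the unit-mass normalization of $F$ and $G$, combined with area preservation, forces the weight $\mu_F-\mu_G$ to be mean zero, so that only the topologically-enforced $2\pi$-bounded oscillation of the boundary trace enters. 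In this way the rigidity of the boundary circle diffeomorphism is converted into an $L^1$ gradient lower bound on the full flow.
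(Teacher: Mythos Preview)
Your proof is correct and takes a genuinely different route from the paper's. The paper argues geometrically: from \eqref{growtht} and \eqref{growtht2} it extracts positive-measure sets $\mathsf{A}(t),\mathsf{B}(t)\subset M$ (points whose $\Phi_2$-images land in $\mathrm{supp}\,F$ and $\mathrm{supp}\,G$) with $|\Phi(x,t)-\Phi(y,t)|\gtrsim t$ on the cover for $(x,y)\in\mathsf{A}(t)\times\mathsf{B}(t)$, then uses Fubini to find two horizontal intervals meeting these sets in large fraction, builds a parallelogram on them, and integrates $|\partial_{x_2}\Phi_1|$ along the slanted fibers to recover the linear lower bound.

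Your argument is more direct and uses only \eqref{growtht}: subtracting the boundary trace $\Phi_1(\cdot,0,t)$, the remainder is controlled in $L^1$ by $\|\partial_{x_2}\Phi_1\|_{L^1}$ via the fundamental theorem of calculus, while the trace term is $O(1)$ because (i) no-penetration makes the boundary flow a lifted circle diffeomorphism with oscillation exactly $2\pi$, and (ii) area preservation together with the unit-mass normalization forces $\mu_F-\mu_G$ to be mean-zero, so only that bounded oscillation enters. The gain is simplicity and the avoidance of the somewhat delicate set-selection step; the cost is that your proof genuinely uses an invariant boundary curve. In the extensions of \S\ref{sect:ext} to the torus, where no horizontal line is invariant and the oscillation of $\Phi_1$ along an arbitrary slice can itself grow in $t$, your trace argument would need a substitute, whereas the paper's parallelogram construction transfers more readily.
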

\begin{proof}
Note that the condition
\be
\left|\int_{M} \Phi_1(x,t) F(\Phi_2(x,t))\rmd x  -\int_{M} \Phi_1(x,t) G(\Phi_2(x,t))\rmd x  \right| \geq \frac{1}{2}|v_*(y_1) -v_*(y_2)|  t 
\ee
together with
\be
\left|\int_{M} \Phi_1(x,t) F(\Phi_2(x,t))\rmd x  \right| \geq \frac{|v_*(y_1)|}{2}t \qquad \left|\int_{M} \Phi_1(x,t) G(\Phi_2(x,t))\rmd x  \right| \geq \frac{|v_*(y_2)|}{2} t
\ee
implies that there exist sets $\mathsf{A}(t),\mathsf{B}(t)\subset M$ (defined by the set of points $x$ so that $\Phi_2(x,t) \in  {\rm supp}(F)$ and  ${\rm supp}(G)$ respectively) such that their measures satisfy $|\mathsf{A}(t)|, |\mathsf{B}(t)|>\mu$ for some $\mu>0$ independent of $t$ and the Euclidean distance on the cover satisfies
\be
{\rm dist}_{\Dm}(\Phi (x,t), \Phi (y,t)) \geq C t \qquad \text{for all} \quad x\in \mathsf{A}(t), \ y\in \mathsf{B}(t)
\ee
for an appropriate constant $C>0$. 

Note that $| \mathsf{A}(t) | > \mu$ implies \begin{equation*}
    \begin{split}
    \left| \left\{ y : | \mathsf{A}(t) \cap (\mathbb{T} \times \{ y \} )| > \mu/10 \right\} \right| > \mu/10. 
\end{split}
\end{equation*} A similar inequality holds for $\mathsf{B}(t) $ as well. Therefore, we may pick $y_{a}, y_{b} \in [0,1]$ with $|y_{a}-y_{b}|>\mu/20$ such that \begin{equation*}
     | \mathsf{A}(t) \cap (\mathbb{T} \times \{ y_{a} \} )| > \mu/10, \quad | \mathsf{B}(t) \cap (\mathbb{T} \times \{ y_{b} \} )| > \mu/10.
\end{equation*}
One can find a finite union of disjoint open intervals in $\mathbb{T} \times \{ y_{a} \}$, denoted by $\mathsf{I}^{a}_{1},\cdots, \mathsf{I}^{a}_{N}$ such that \begin{equation}\label{eq:thick1}
    |(\cup_{i=1}^{N} \mathsf{I}^{a}_{i}) \cap \mathsf{A}(t)| > \frac23 |\cup_{i=1}^{N} \mathsf{I}^{a}_{i}|.
\end{equation} Similarly, one may find disjoint open intervals in $\mathbb{T} \times \{ y_{b} \}$ with the property that  \begin{equation}\label{eq:thick2}
    |(\cup_{j=1}^{M} \mathsf{I}^{b}_{j}) \cap \mathsf{B}(t)| > \frac23 |\cup_{j=1}^{M} \mathsf{I}^{b}_{j}|.
\end{equation} 
For simplicity, let us further assume that $y_{a}>y_{b}$ and $|\cup_{i=1}^{N} \mathsf{I}^{a}_{i}| = \frac{\mu}{20} = |\cup_{j=1}^{M} \mathsf{I}^{b}_{j}|. $ By relabeling and subdividing these collections of intervals, one may ensure that for some $K$ and collections of disjoint intervals $\{ \mathsf{J}^{a}_{k}\}_{k=1}^{K}$, $\{ \mathsf{J}^{b}_{k}\}_{k=1}^{K}$ that \begin{equation*}
    \cup_{i=1}^{N} \mathsf{I}^{a}_{i} = \cup_{k=1}^{K} \mathsf{J}^{a}_{k}, \quad \cup_{j=1}^{M} \mathsf{I}^{b}_{j} =\cup_{k=1}^{K} \mathsf{J}^{b}_{k} 
\end{equation*} with the additional property that the intervals are ordered from left to right as $k$ increases, and that $|\mathsf{J}^{a}_{k}|=|\mathsf{J}^{b}_{k}|$ for each $k=1,\cdots, K.$

For each $k$, consider the parallelogram $\mathsf{P}_{k}$ whose top and bottom edges are given by $\mathsf{J}_{k}^{a}$ and $\mathsf{J}_{k}^{b}$, respectively (see Figure \ref{fig:gradient-L1}). For simplicity let us write $\mathsf{J}_{k}^{a} = [\alpha ,\alpha +\dlt] \times \{y_a\}$ and $\mathsf{J}_{k}^{b} = [\beta ,\beta +\dlt] \times \{y_b\}$ and parameterize $\mathsf{P}_{k}$ by $((1-\tau)\alpha + \tau\beta + \zeta, (1-\tau)y_{a} + \tau y_{b})$, with $0\le \tau \le 1$ and $0 \le \zeta \le \delta$. Furthermore, note that \begin{equation*} \begin{split}
    \Phi_{1}( \beta+\zeta , y_{b}) -  \Phi_{1}( \alpha+\zeta , y_{a}) & = \int_0^1 \frac{\rmd}{\rmd \tau} \left( \Phi_{1}((1-\tau)\alpha + \tau\beta + \zeta, (1-\tau)y_{a} + \tau y_{b}) \right)   \, \rmd \tau  \\
    & = \int_0^1 \left(   (\beta-\alpha) \partial_{1}\Phi_{1} + (y_{b}-y_{a}) \partial_{2}\Phi_{1} \right)  \, \rmd\tau 
\end{split}
\end{equation*} and now we can take absolute values and integrate both sides over a set $\mathsf{K}_{k}$ which is defined by the subset of $[0,\dlt]$ such that for $\zeta\in\mathsf{K}_{k}$, $(\beta+\zeta, y_b) \in \mathsf{B}(t)$ and $(\alpha +\zeta, y_a) \in \mathsf{A}(t)$. This condition guarantees that $|\Phi_{1}( \beta+\zeta , y_{b}) -  \Phi_{1}( \alpha+\zeta , y_{a}) |\gtrsim t.$ Then, we obtain that \begin{equation*}
    t| \mathsf{K}_{k} | \lesssim_{\mu} \nrm{ \nabla \Phi_{1} }_{L^1( \mathsf{P}_{k}) }
\end{equation*} where the implicit constant depends only on $\mu$. Summation in $k$ gives \begin{equation*}
    t\lesssim_{\mu} t\sum_{k=1}^{K}| \mathsf{K}_{k} | \lesssim_{\mu} \sum_{k=1}^{K}\nrm{ \nabla \Phi_{1} }_{L^1( \mathsf{P}_{k}) } \le \nrm{ \nabla \Phi_{1} }_{L^1(M) } 
\end{equation*} thanks to the conditions \eqref{eq:thick1}, \eqref{eq:thick2} (and using the pigeonhole principle) and the fact that the parallelograms $\mathsf{P}_{k}$ are non-overlapping. Lastly, note that $\nabla \Phi_{1}$ takes the same value when $\Phi_{1}$ is interpreted as a function onto $M$ (and not on the cover). This finishes the proof of Lemma \ref{lem:flow-gradient-growth}.
\end{proof}

In view of Lemmas \ref{keylemma} and \ref{lem:flow-gradient-growth}, the proof of Theorem \ref{twistingthm} for shear flows is now complete.
\end{proof}


We now prove Theorem \ref{twistingthm2}. 
\begin{proof}[Proof of Theorem \ref{twistingthm2}]
Compute
\begin{align}
\frac{\rmd}{\rmd t} \| \Phi_1(t) - x_1 - v_*(\Phi_2(t)) t \|_{L^2}^2  &= 2\int_M (u_1(\Phi(t)) - v_*(\Phi_2(t))) (\Phi_1(t) - x_1 -  t v_*(\Phi_2(t)))\rmd x\\
&\qquad -2 t \int_M \left[u_2(\Phi(t)) v_*'(\Phi_2(t)) (\Phi_1(t) - x_1 - t v_*(\Phi_2(t)) \right]\rmd x\\
& \leq 2 \| u_1 - v_* \|_{L^2} \| \Phi_1(t) - x_1 - v_*(\Phi_2(t)) t \|_{L^2}\\
&\qquad  -2 t \int_M \left[u_2(\Phi(t)) v_*'(\Phi_2(t)) (\Phi_1(t) - x_1 - t v_*(\Phi_2(t)) \right]\rmd x.
\end{align}
In the final term, we note
\be
\int_M u_2(\Phi(t)) v_*'(\Phi_2(t)) v_*(\Phi_2(t)) \rmd x = \int_M u_2(x_1,x_2) v_*'(x_2) v_*(x_2)\rmd x =0
\ee
 using that $u_2= \partial_1 \psi$.  We also bound 
\be
\left|\int_M u_2(\Phi(t)) v_*'(\Phi_2(t)) x_1 \rmd x\right|\leq  2\pi \|v_*'\|_{L^\infty} \|u_2(t) \|_{L^2}
\ee
which is immediate, as well as the same bound for
\begin{align}
\left|\int_M u_2(\Phi(t)) v_*'(\Phi_2(t)) \Phi_1(t)  \rmd x\right|&\leq  2\pi \|v_*'\|_{L^\infty}  \|{\psi}-\psi_*\|_{W^{1,1}}\\
&\leq 2\pi \|v_*'\|_{L^\infty}  \| u_2(t) \|_{L^2}
\end{align}
which follows from Corollary \ref{isotopycor} since we may write $u_2=\partial_1 (\psi -\psi_*)$.
Thus, with 
\be
q(t):= \| \Phi_1(t) - x_1 - v_*(\Phi_2(t)) t \|_{L^2}^2
\ee
we obtain the differential inequality
\be
\dot q(t) \leq  2\| u(t) - \nabla^\perp \psi_* \|_{L^2} \left(\sqrt{q(t)} +  4\pi   \|v_*'\|_{L^\infty}  t\right), \qquad q(0)=0.
\ee
We have
\be
\dot q(t) \leq  2\ve \left(\sqrt{q(t)} +  4\pi   \|v_*'\|_{L^\infty}  t\right), \qquad q(0)=0.
\ee
We claim $q(t)\leq \ve C^2  t^2$ for an appropriate constant $C$. 
First note that, by Taylor expansion,
\be
q(t)=  \| u_1(x_1,x_2,0)  - v_*(x_2)  \|_{L^2}^2 t^2 +O(t^3) \leq \ve^2 t^2 \qquad \text{as} \quad t\to 0.
\ee
Now, if $q(t)\leq \ve C^2  t^2$ then the right-hand-side is bounded as
\be
2\ve \left(\sqrt{q(t)} +  4\pi   \|v_*'\|_{L^\infty}  t\right)\leq  (C \ve^{3/2}  +   4\pi     \|v_*'\|_{L^\infty}  \ve) t  \leq 2 \ve C^2  t
\ee 
provided that $C$ is taken sufficiently large, e.g. $
C\geq \tfrac{1}{4} ( \sqrt{32 \pi \|v_*'\|_{L^\infty}+\ve} + \sqrt{\ve}).$  
\end{proof}

\subsection{A First Application: Quantitative Bounds on Arnold Diffusion}\label{arnolddiffsec}

The purpose of this section is to give a first application of Theorem \ref{twistingthm2}. Namely, we give quantitative bounds on the time it takes for a mass of particles to experience the so-called Arnold Diffusion. Our result is reminiscent of the work of Nekhoroshev \cite{Nek} for real-analytic stationary or time-periodic perturbations of integrable systems.  These results work also in higher dimension and establish stability of individual orbits over exponentially long periods by rather involved arguments \cite{Lochak}.  By contrast, our analysis applies to the movement of Lebesgue--typical trajectories rather than individual orbits, and the argument is a rather simple corollary of our stability of twisting Theorem \ref{twistingthm2}. The result applies to general nonautonomous $H^1$ perturbations where the techniques of KAM theory do not seem to apply. 

\begin{theorem}\label{arnolddiff}
Let $u_*=(v_*(y),0)$ be a shear flow on $M=\mathbb{T}\times [0,1]$.  Let $u=\nabla^\perp\psi$ be any (possibly non-autonomous) smooth velocity field on $\mathbb{T}\times [0,1]$ and let $\Phi(t)$ be its corresponding flow map. Set $\|\psi-\psi_*\|_{2}=\ve.$ Then for some $C_0>0$, we have
\be
\| v_*(\Phi_2(t)) - v_*(x_2)\|_{L^2} \leq C_0 \sqrt{\ve}\log(1+t).
\ee
\end{theorem}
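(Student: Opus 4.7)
\textbf{Proof plan for Theorem \ref{arnolddiff}.} The idea is to apply the shear-flow bound \eqref{Phi1Close} from Theorem \ref{twistingthm2} at a dyadic sequence of time scales and to exploit an algebraic cancellation that produces only a single logarithmic factor in time. Write Theorem \ref{twistingthm2} in the compact form
\[
\Phi_1(t) - x_1 = t\, f(t) + e(t), \qquad f(t) := v_*(\Phi_2(t)), \qquad \|e(t)\|_{L^2} \leq C \sqrt{\ve}\, t,
\]
where $C = C(\|v_*'\|_{L^\infty})$, so that the target estimate is exactly a bound on $\|f(t) - f(0)\|_{L^2}$.

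\emph{Key doubling estimate.} For $0 < s \leq t$, apply Theorem \ref{twistingthm2} three times: at time $s$, at time $t$, and to the time-shifted flow $\Phi^{[s,t]} := \Phi_t \circ \Phi_s^{-1}$ of duration $t-s$. For the shifted flow the averaged perturbation norm is still $O(\ve)$, by the elementary inequality $\int_s^t \|u - u_*\|_{L^2}\, \rmd\tau \leq \int_0^t \|u - u_*\|_{L^2}\, \rmd\tau \leq \ve t$. Using the flow composition $\Phi_t = \Phi^{[s,t]} \circ \Phi_s$ and the measure-preservation of $\Phi_s$ (to rewrite the shifted-flow bound back in terms of the original label $x$), one obtains
\begin{align*}
\Phi_1(s) - x_1 &= s f(s) + e(s), \\
\Phi_1(t) - x_1 &= t f(t) + e(t), \\
\Phi_1(t) - \Phi_1(s) &= (t-s) f(t) + \tilde e(s,t),
\end{align*}
with all three error terms bounded in $L^2$ by $O(\sqrt{\ve}\, t)$. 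Subtracting the first identity from the second and comparing with the third makes the $\Phi_1$ terms cancel and leaves
\[
s\, [f(t) - f(s)] = e(s) - e(t) + \tilde e(s,t),
\]
hence $\|f(t) - f(s)\|_{L^2} \leq C'\sqrt{\ve}\, t/s$. Taking $s = t/2$ gives the doubling bound $\|f(t) - f(t/2)\|_{L^2} \leq C'' \sqrt{\ve}$.

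\emph{Short-time bound and telescoping.} For $t \in [0,1]$, since $u_{*,2} \equiv 0$, Minkowski's inequality together with the definition of $\|\psi - \psi_*\|_2$ yields $\|\Phi_2(t) - x_2\|_{L^2} \leq \int_0^t \|u(\tau) - u_*\|_{L^2}\, \rmd\tau \leq \ve t$, and so $\|f(t) - f(0)\|_{L^2} \leq \|v_*'\|_{L^\infty}\, \ve\, t$. For $t \geq 1$, set $N = \lceil \log_2 t \rceil$ and $t_k = 2^{-k} t$, so that $t_N \leq 1$; telescoping across $t = t_0 > t_1 > \dots > t_N$ and applying the doubling estimate on each dyadic step together with the short-time bound at $t_N$ gives
\[
\|f(t) - f(0)\|_{L^2} \leq \|f(t_N) - f(0)\|_{L^2} + \sum_{k=0}^{N-1} \|f(t_k) - f(t_{k+1})\|_{L^2} \lesssim \ve + N \sqrt{\ve} \lesssim \sqrt{\ve}\, \log(1+t).
\]
The main technical step is the algebraic setup in the key doubling estimate, in particular verifying that the shifted flow obeys a twisting bound of the same form (with an $O(\ve)$ perturbation norm) so that the three error terms combine with the correct sign structure for the $\Phi_1$ factors to cancel. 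Once this identity is in place, the logarithmic bound falls out of the standard dyadic sum.
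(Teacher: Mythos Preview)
Your approach is correct in spirit and reaches the same conclusion, but it is genuinely different from the paper's argument, and one step needs more care than you give it.

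\textbf{Comparison with the paper.} The paper does not time-shift or telescope. Instead it writes the ODE identity
\[
\Phi_1(t)-x_1-t\,v_*(\Phi_2(t))=\int_0^t\bigl[v_*(\Phi_2(s))-v_*(x_2)\bigr]\,\rmd s-t\bigl[v_*(\Phi_2(t))-v_*(x_2)\bigr]+\int_0^t(u_1-v_*)(\Phi(s),s)\,\rmd s,
\]
takes $L^2$ norms, and uses Theorem~\ref{twistingthm2} \emph{once} to bound the left side. With $Q(t)=\|f(t)-f(0)\|_{L^2}$ this yields the integral inequality $tQ(t)\le\int_0^t Q(s)\,\rmd s+\mathcal E(t)$, which is then rewritten as $\frac{\rmd}{\rmd t}\bigl(\frac1t\int_0^t Q\bigr)\le \mathcal E(t)/t^2$ and integrated. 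So the paper's route is a continuous Gr\"onwall-type argument using a single application of the twisting bound, whereas yours is a discrete doubling/telescoping argument using the twisting bound at many scales. Your approach makes the origin of the logarithm very transparent; the paper's avoids having to re-apply the twisting theorem to time-shifted flows.

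\textbf{The point that needs more work.} You assert that the shifted flow $\Phi^{[s,t]}$ has ``averaged perturbation norm still $O(\ve)$'' via $\int_s^t\|u-u_*\|_{L^2}\le \ve t$. But the hypothesis of Theorem~\ref{twistingthm2} is a \emph{supremum over all $T$} of the running average started at the initial time, and for the shifted flow this supremum is \emph{not} bounded: $\frac{1}{T'}\int_s^{s+T'}\|u-u_*\|_{L^2}\,\rmd\tau\le \ve(1+s/T')$ blows up as $T'\to 0$. So you cannot invoke Theorem~\ref{twistingthm2} as a black box here. The fix is to reopen its proof: the differential inequality $\dot q(\tau)\le 2\|u(s+\tau)-u_*\|_{L^2}\bigl(\sqrt{q(\tau)}+4\pi\|v_*'\|_\infty\tau\bigr)$ integrates to give, with $R=\sup_{[0,\tau]}\sqrt{q}$,
\[
R^2\le 2R\,\ve(s+\tau)+8\pi\|v_*'\|_\infty\,\ve(s+\tau)\tau,
\]
using only $\int_0^{s+\tau}\|u-u_*\|_{L^2}\le\ve(s+\tau)$. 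At $s=t/2$, $\tau=t/2$ this yields $\sqrt{q(t/2)}\lesssim \sqrt{\ve}\,t$, which is exactly the $\|\tilde e(s,t)\|_{L^2}\lesssim\sqrt{\ve}\,t$ you need. Once this is in place, your algebraic cancellation and dyadic telescoping go through as written.
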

\begin{remark}
If the shear is strictly monotone, then Theorem \ref{arnolddiff} implies that for some $C>0$
\be
\| \Phi_2(t) - x_2\|_{L^2} \leq C \sqrt{\ve}\log(1+t).
\ee
\end{remark}
\begin{proof}
Note that from the ODE $\dot{\Phi}_1(t) = u_1(\Phi(t),t)$, we have
\begin{align}
&{\Phi}_1(t)-(x_1 + t v_*(\Phi_2(t))) = \int_0^t u_1(\Phi(s),s)\rmd s -  t v_*(\Phi_2(t))  \\
 &\qquad = \int_0^t (v_*(\Phi_2(s))- v_*(x_2))\rmd s -  t (v_*(\Phi_2(t))- v_*(x_2)) + \int_0^t (u_1 - v_*)(\Phi(s),s)\rmd s.
\end{align}
Thus, the quantity $Q(t):=\|v_*(\Phi_2(t;\cdot))- v_*(\cdot)\|_{L^2}$ satisfies
\be
t Q(t) \leq \int_0^t Q(s)\rmd s + \mathcal{E}(t)
\ee
where
\be
\mathcal{E}(t):=\left\|{\Phi}_1(t)-(x_1 + t v_*(\Phi_2(t)) - \int_0^t (u_1 - v_*)(\Phi(s),s)\rmd s\right\|_{L^2}.
\ee
In view of Theorem \ref{twistingthm2}, this quantity satisfies $\mathcal{E}(t)\leq  C \min\{\sqrt{\ve}  t, \ve t^2\}$ (the quadratic behavior is important near $t=0$).  Thus we find
\be
\frac{\rmd }{\rmd t}\frac{1}{t}\int_0^t Q(s)\rmd s\leq \frac{\mathcal{E}(t)}{t^2}.
\ee
Integrating and using the bounds we see that 
\be
\frac{1}{t}\int_0^t Q(s)\rmd s\leq \int_0^t \frac{\mathcal{E}(s)}{s^2}\rmd s\leq C \sqrt{\ve}  \log(1+t).
\ee
The claim follows.
\end{proof}

\begin{remark}
Theorem \ref{arnolddiff} is sharp in the following sense: fix a Lipschitz shear profile $(v(y),0)$ on the channel $M=\mathbb{T}\times [0,1]$. Given a volume-preserving diffeomorphism $\Phi_*: M \to M$ in the component of the identity, there exists a vector field $u$ which is close to shear, e.g. $u=(v(y),0) + \ve b$ with $\|b\|_{L^\infty} \leq 1$, so that the corresponding flow map $\Phi(\cdot, t)$ begins at $\Phi(\cdot,0)= \Phi_*(\cdot)$ and is driven to $\Phi(x,y,T) = {\rm id} + T(v(y), 0)\  {\rm mod}\ 2\pi$ by a time $T\sim  e^{\|B\|_{L^\infty} (1+ \|v'\|_{L^\infty} )/ \ve} $, where $B$ is any divergence-free vector field defining an isotopy from identity to $\Phi_*$ in time 1.  This shows the ability to move an order 1 mass, initially distributed arbitrarily throughout the channel, to their appropriate vertical locations on the timescale of Theorem \eqref{arnolddiff}. The perturbation that accomplishes this movement is
    \be
b(x,y,t) =  \frac{\|B\|_{L^\infty}^{-1} (1+ \|v'\|_{L^\infty} )^{-1}}{(1+t)}  \begin{pmatrix} 
B_1( x- t v(y),y,s(t)) + t v'(y) B_2( x- t v(y),y,s(t)) \\
B_2( x- t v(y),y,s(t))
\end{pmatrix}.
\ee
\end{remark}

\subsection{Extensions}\label{sect:ext}

In the following, we give an indication of how to generalize the stability of twisting to Hamiltonian flows on different manifolds $M$.
All of the examples admit a global angular coordinate.  The general case of surfaces where a global angular coordinate is not available appears to require a new idea.

\subsubsection*{General annular surfaces}

It should be clear from the proof for shear flows that the computations are essentially local and therefore apply, as stated in  Theorem \ref{twistingthm}, to general annular domains $M$.
We may  use action-angle variables (see Arnold \cite{A2}), $(x,y)\mapsto (\psi_*,\theta)$ where $\psi_*$ is the ``radial coordinate" and  the ``angular coordinate" is $\theta:\Omega\to \mathbb{R}$ 
\be\label{thetadef}
\theta(x)= \frac{2\pi}{\mu(\psi_*(x))} \int_{\Gamma_{x_0(\psi_*),x} } \frac{\rmd \ell}{|\nabla \psi_*|} 
\ee
to deduce Theorem \ref{twistingthm}. We leave the details to the reader.

\subsubsection*{2D Tori}

For a two-dimensional torus, the universal covering space is the plane $\mathbb{R}^2$. In the proof of Theorem \ref{twistingthm}, we integrate over $[-\pi,\pi]\times \mathbb{R}$ rather than over the fundamental domain  $[-\pi,\pi]^2$.  With this modification, the image of the strip under the lifted flow has exactly two boundaries which must be fixed translates of one another as discussed in and around Figure \ref{fig:imagewrapcover}.

\subsubsection*{Punctured planar domains}\label{punctureddomains}
We can also consider domains such as  $M=\mathbb{R}^2/\{0\}$, namely we consider only those velocity fields on the plane that leave the origin invariant. We will give a bit more detail here since they can arise in planar Euler dynamics with some $m$-fold symmetries.  Indeed, we intend to use a version of Theorem \ref{twistingthm2} in \S \ref{perimgrow} to establish infinite perimeter growth for some vortex patches.

We have to be somewhat more careful in our computations since the domain is unbounded, but it will not be so different from the previous one. We shall consider the case where the velocities under consideration are near circular
\be
u_*(r,\theta)= v(r) e_\theta,
\ee
and let $\mu(r)=v(r)/r$. We will also assume sufficient decay on $u(x)$ when $|x|\rightarrow\infty.$ For simplicity, we will assume that all velocities under consideration decay on the order of $|x|^{-1}$ when $|x|$ is large. 
The more general setting is similar.
First observe that $\Phi_\theta$ and $\Phi_r$ satisfy:
\[\frac{\rmd}{\rmd t}\Phi_r=u_r(\Phi,t),\qquad \frac{\rmd}{\rmd t}\Phi_\theta = u_\theta(\Phi,t),\] where
\[u_\theta(r,\theta)=\frac{u\cdot x^\perp}{|x|^2},\qquad u_r(r,\theta)=\frac{u\cdot x}{|x|}.\]  Note that defining $u_\theta$ as above makes $\Phi_\theta$ and $\Phi_r$ analogous to $\Phi_1$ and $\Phi_2,$ respectively. 
Here, $\Phi(t)$ is understood as a diffeomorphism on the universal cover of $\mathbb{R}^2\setminus\{0\} \cong \mathbb{R}_+\times \mathbb{R}$.
Consider now
\[\lambda(t):=\int_{M} (\Phi_\theta-\theta) \mu(\Phi_r)\rmd x.\] Observe that this quantity is well-defined since $\Phi_\theta-\theta=\int_0^t u_{\theta} \rmd s$ decays for fixed $t$ like $\frac{1}{|\Phi|^2}$ for $\Phi$ large and similarly $\mu(\Phi_r)$ decays like $\frac{1}{|\Phi|^2}.$ This can be seen easily from Lemma \ref{VelocityDecay}. Now let us differentiate $\lambda.$ Observe that
\[\lambda'(t)=\int_{M} u_\theta \mu+\int_{M}(\Phi_\theta-\theta)\mu'(\Phi_r)u_r.\] Observe that both terms individually make perfect sense. Moreover, we may expand the second integral into another two integrals both of which are well defined:
\[\lambda'(t)=\int_{M} u_\theta \mu+\int_{M}\Phi_\theta \mu'(\Phi_r)u_r-\int_{M}\theta \mu'(\Phi_r)u_r.\]
The key is that, arguing exactly as in the Proof of Lemma \ref{keylemma}, the second term (which looks a-priori to be of order $\ve t$) can be estimated simply by $\|\tfrac{1}{r}u_r\|_{L^2},$ which should be assumed to be small, while we also assume that $r\mu'(r)\in L^2.$ 
Now considering:
\[\frac{\rmd }{\rmd t}\|\Phi_\theta-\theta-t \mu(\Phi_r)\|_{L^2}^2=2\int (\Phi_\theta-\theta-t \mu(\Phi_r))(u_\theta-\mu(\Phi_r)-t\mu'(\Phi_r)u_r).\]
It is now not difficult to show as before, under the assumptions mentioned above, namely that $r \mu',\mu,\frac{1}{r} u_r\in L^2$ that 
\begin{equation}\label{twistingthmunbounded}\|\Phi_\theta-\theta-t \mu(\Phi_r)\|_{L^2}\leq C(u_*)t\sqrt{\|\tfrac{1}{r}u_r\|_{L^2}+\|u_\theta-\mu\|_{L^2}}. \end{equation}

\section{Applications to two-dimensional perfect fluids}

\noindent 
We consider the Euler equations governing the motion of a fluid that is incompressible, inviscid and confined to a domain $M$,  possibly with boundary $\partial M$:
  \begin{alignat}{2}\label{eeb}
\partial_t u + u \cdot \nabla u &= -\nabla p,   &\qquad \text{in} \quad M,\\ \label{eeb2}
\nabla \cdot u &=0,   &\qquad \text{in} \quad M,\\ \label{eeb3}
u|_{t=0} &=u_0 ,   &\qquad\text{in} \quad M,\\
u\cdot \hat{n} &=0,   &\qquad\text{on}\   \partial M. \label{eef}
\end{alignat}
These equations are time reversible, meaning that $t\mapsto -t$ and $u\mapsto -u$ maps solutions to solutions. As such, any given ``structure" observed in the velocity field can appear at arbitrary late stages by preparing appropriate initial conditions.  Nevertheless, there appears to be one solid fact  borne out by numerical and physical experiment -- starting from ``any old" data, only a meager set of possible states, consisting generally of a few coherent vortices, persist indefinitely and represent some sort of weak attractor for the Euler system.
That is, the diversity of velocity fields appearing in the long time is greatly diminished as compared to their initial configuration, indicating a sort of gross entropy decrease or irreversibility in the Eulerian (describing the velocity field) phase space\footnote{For further discussions in this direction, see \cite{S3},  \S 9 of \cite{KMS22},  \S 3.4 of \cite{DE} and \cite{DD}.}. Understanding the mechanisms behind this {\it inviscid relaxation} and conjectured entropy decrease appears to be a major challenge, even though it is believed to occur for ``generic" solutions.  

The Lagrangian description, where we consider not the evolution of the velocity field but of the particles themselves, can give us some ideas into how such relaxation mechanisms can come about. 
In the Lagrangian picture, the configuration space of particle labellings is just the group of area-preserving diffeomorphisms $\Dm$ and Euler solutions represent parametrized paths $\{\Phi(\cdot, t)\}_{t\in \mathbb{R}}$ generated by the velocity vector field $u$ solving \eqref{eeb}--\eqref{eef}
\be\label{flowmap}
\frac{\rmd}{\rmd t}\Phi(\cdot, t)  = u(\Phi(\cdot, t) ,t) \qquad \Phi_0 = {\rm id}.
\ee
V.I. Arnold recognized \cite{A} that perfect fluid motion could be described as geodesic on $\Dm$, with respect to the $L^2$ metric (rigorously formalized by Ebin and Marsden \cite{EM}).  It is well known that the configuration space is vast in two-dimensions: diameter of the group $\Dm$ is infinite  \cite{ER}.  The dramatic decrease of diversity of the Eulerian velocity fields may be explained by the ample ``room" in the configuration space to absorb that lost complexity.\footnote{Indeed, this is one way to interpret the results on vorticity mixing and inviscid damping (e.g. coarse-grained relaxation to equilibrium) \cite{B}.}

 To further understand this point, it will be convenient to discuss the Eulerian state of the fluid by its  vorticity 
 $\omega := \nabla^\perp \cdot u$  with $\nabla^\perp:=(-\partial_2, \partial_1)$. The system \eqref{eeb}--\eqref{eef} can be reformulated as
  \begin{alignat}{2}\label{eevb}
\partial_t\omega + u \cdot \nabla \omega&=0 &\qquad \text{in} \quad M,\\
\omega|_{t=0} &=\omega_0 ,   &\qquad\text{in} \quad M, \label{eevbf}
\end{alignat}
where $u:=K_M[\omega]$ is recovered by the Biot--Savart law $K_M = \nabla^\perp \Delta^{-1}$ with $\Delta$ denoting the Dirichlet Laplacian on $M$ (if $M$ is simply connected.  For the more general case,  see  \S 2.2 of \cite{DE}.).
From \eqref{eevb}--\eqref{eevbf}, we see that the vorticity is, for all time, an area preserving rearrangement of its initial data
\be\label{vortformu}
\omega(t) = \omega_0\circ \Phi^{-1}(\cdot, t).
\ee
Fundamentally, it is this relation \eqref{vortformu} that is responsible for the transfer of information from the configuration space to the phase space, and vice versa.

In this work, we establish some qualitative expressions of irreversibility for the Euler equations that are essentially Lagrangian in nature and can be thought of as caused by, or symptoms of, the aforementioned vastness of the configuration space \cite{S3}. They will generally pertain only to solutions starting close to nearby Lyapunov stable equilibria $u_*$ which themselves induce some differential rotation (shearing) in their corresponding Lagrangian configuration (which is unsteady).

\subsection{Aging of two-dimensional perfect fluids}\label{sec:age}

Let $M\subset \mathbb{R}^2$ be a bounded domain with boundary $\partial M$.  Let $\Dm$ denote the group of area preserving diffeomorphisms on $M$. 
Here we quantify a certain increased complexity in the configuration space.  To do so, given a configuration $\varphi\in \Dm$, we can attribute an age:

\begin{definition}\label{agedef}
Let $\varphi\in \Dm$ and energy budget $\mathsf{E}>0$. The ``age" $t_{\mathsf{age}}(\varphi;\mathsf{E}) $ of $\varphi$ is
\be
t_{\mathsf{age}}(\varphi;\mathsf{E}) :=  \inf\left\{ T>0 \ :  \  \gamma_\cdot:[0,T]\mapsto  \Dm \quad \gamma_0=\id, \ \gamma_T=\varphi, \quad  \frac{1}{T}\int_0^T \|\dot{\gamma}_\tau\|_{L^2(M)}^2\rmd \tau \leq \mathsf{E}\right\}.
\ee
\end{definition}

An interesting question is, do fluids generically age? That is, as time runs on, does the configuration of a perfect fluid $\Phi(\cdot, t)$ become older in that $t_{\mathsf{age}}(\Phi(\cdot, t))\to \infty$ as $t\to\infty$? 
Here we prove that open sets of data give rise to aging Lagrangian configurations:

\begin{theorem}[Aging of the fluid]\label{agethm} Let $M$ be an annular surface (or any of the settings mentioned in \S\ref{sect:ext}).
Let $\omega_*$ be a non-isochronal $L^2$--stable steady state.  For $\ve:=\ve(\omega_*)$ sufficiently small, consider any smooth initial vorticity field $\omega_0$  having energy $\mathsf{E}_0=\frac{1}{2}\|u_0\|_{L^2}^2$ and satisfying $\|\omega-\omega_*\|_{L^2(M)}\leq \ve$.  Let  $\Phi(\cdot, t)$ be its corresponding  Lagrangian flow map at time $t$. Then $t_{\mathsf{age}}(\Phi(\cdot, t);\mathsf{E}_0)\geq C(\omega_*)t$ (in fact, ${\rm dist}_\Dm({\rm id}, \Phi(\cdot, t)) \geq C t$).  In particular, as $t\to\infty$ we have $t_{\mathsf{age}}(\Phi(\cdot, t))\to\infty$.
\end{theorem}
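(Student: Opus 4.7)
The plan is to construct a functional $\mathcal{J}:\Dm\to\mathbb{R}$ that (i) grows linearly in $|t|$ along the Euler flow $\Phi_t$ (via Theorem \ref{twistingthm}), and (ii) is globally ``$L^{2}$-Lipschitz'' along \emph{every} smooth isotopy $\gamma:[0,T]\to\Dm$ from $\id$ to $\Phi_t$, with a constant that does \emph{not} grow with how far $\gamma$ winds up on the universal cover (via Corollary \ref{isotopycor}). Comparing (i) and (ii) forces $\int_0^T\|\dot\gamma_\tau\|_{L^2}\,\rmd\tau\gtrsim t$ for every competitor, hence $\dist(\id,\Phi_t)\gtrsim t$, and a Cauchy--Schwarz converts this into the age bound. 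For concreteness I describe the construction for $M=\mathbb{T}\times[0,1]$; the annular case follows via the action-angle framework of \S\ref{sect:ext}.

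\textbf{Step 1: all-time stability and definition of $\mathcal{J}$.} The $L^2$-stability of $\omega_*$ propagates initial smallness to all times: $\sup_{t\in\mathbb{R}}\|\omega(t)-\omega_*\|_{L^2(M)}\leq \Lambda(\omega_*)\,\ve$. Biot--Savart on the bounded surface then gives uniform control of $\|\psi(t)-\psi_*\|_{L^1}$ and $\|\nabla(\psi-\psi_*)\|_{L^2}$ by $C(\omega_*)\ve$, and writing $\nabla^\perp\psi_*\cdot\nabla\psi=\nabla^\perp\psi_*\cdot\nabla(\psi-\psi_*)$ the norm $\|\psi-\psi_*\|_{\mathsf{A}}$ of Theorem \ref{twistingthm} is likewise $\leq C(\omega_*)\ve$; non-isochrony supplies Assumption 1. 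Choose smooth bumps $F,G:[0,1]\to\mathbb{R}$ supported near levels $y_1\ne y_2$ with distinct rotation speeds $v_*(y_1)\ne v_*(y_2)$, normalized so $\int F=\int G$. For $\varphi\in\Dm$ in the identity component, pick any lift $\tilde\varphi$ and set
\[\mathcal{J}(\varphi):=\int_M \tilde\varphi_1\,F(\tilde\varphi_2)\,\rmd x - \int_M \tilde\varphi_1\,G(\tilde\varphi_2)\,\rmd x.\]
A deck transformation shifts the two integrals by $2\pi|\mathbb{T}|\int F$ and $2\pi|\mathbb{T}|\int G$; the normalization $\int F=\int G$ therefore makes $\mathcal{J}$ a well-defined function on $\Dm$. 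The estimate \eqref{growtht} from the proof of Theorem \ref{twistingthm} yields, for $\ve$ small,
\[|\mathcal{J}(\Phi_t)|\geq c(\omega_*)\,|t|\qquad\text{for all large }|t|.\]

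\textbf{Step 2: isotopy bound via Corollary \ref{isotopycor}.} For any smooth isotopy $\gamma:[0,T]\to\Dm$ with $\gamma_0=\id,\gamma_T=\Phi_t$, let $v_\tau:=\dot\gamma_\tau\circ\gamma_\tau^{-1}=\nabla^\perp\tilde\psi_\tau$. Differentiating $I_F(\tilde\gamma_\tau):=\int_M\tilde\gamma_{\tau,1}F(\tilde\gamma_{\tau,2})\,\rmd x$ along the isotopy gives
\[\frac{\rmd}{\rmd\tau}I_F(\tilde\gamma_\tau) = \int_M v_{\tau,1}(\gamma_\tau)\,F(\gamma_{\tau,2})\,\rmd x + \int_M \tilde\gamma_{\tau,1}\,F'(\gamma_{\tau,2})\,(\partial_1\tilde\psi_\tau)(\gamma_\tau)\,\rmd x.\]
The first term is bounded by $|\mathbb{T}|^{1/2}\|F\|_{L^2}\|v_\tau\|_{L^2}$ using the measure-preservation of $\gamma_\tau$. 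The second term is in \emph{exactly} the form of the left-hand side of Corollary \ref{isotopycor} (with $g=\tilde\psi_\tau$, $f=F'$), and is therefore bounded by $C\|F'\|_{L^\infty}\|\partial_1\tilde\psi_\tau\|_{L^1}\leq C'\|v_\tau\|_{L^2}$, \emph{independently of $\gamma_\tau$}. Running the same computation for $I_G$ and integrating,
\[|\mathcal{J}(\Phi_t)-\mathcal{J}(\id)|\leq \tilde C(F,G)\int_0^T\|\dot\gamma_\tau\|_{L^2(M)}\,\rmd\tau.\]
Combined with Step 1 and taking the infimum over isotopies (noting $\mathcal{J}(\id)=O(1)$), one gets $\dist(\id,\Phi_t)\geq c'(\omega_*)\,t$ for $t$ large. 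For any admissible competitor in the age infimum, Cauchy--Schwarz gives $\int_0^T\|\dot\gamma\|_{L^2}\,\rmd\tau\leq T\sqrt{\mathsf{E}_0}$, whence $t_{\mathsf{age}}(\Phi_t;\mathsf{E}_0)\geq c(\omega_*)\,t/\sqrt{\mathsf{E}_0}$, proving the claim.

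\textbf{Main obstacle.} The whole scheme pivots on the uniform-in-$\gamma_\tau$ bound of Step 2. A priori $\mathcal{J}(\varphi)$ records how far $\tilde\varphi(M)$ wanders on the universal cover, so naively one would fear that its rate of change along an isotopy grows with $\mathrm{diam}(\tilde\gamma_\tau(M))$ (itself potentially $\sim T$), which would make the scheme circular. The redemption is the topological cancellation at the heart of Corollary \ref{isotopycor}: the boundary sum produced when integrating by parts telescopes regardless of how convoluted $\tilde\gamma_\tau(M)$ is, so that $\mathcal{J}$ behaves as a localized analog of the Calabi invariant. Making this structural input visible to the specific test functions $F,G$ (localized on distinct highways with different travel times) is the deep content of the argument; the remaining, more routine, difficulty is the all-time propagation of $\|\omega(t)-\omega_*\|_{L^2}\ll 1$ in Step 1, which is exactly the $L^{2}$-stability hypothesis.
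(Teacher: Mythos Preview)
Your argument is correct and gives a somewhat different (and in fact cleaner) derivation than the paper's own proof. Both approaches rest on the same machinery from \S\ref{highway}: the paper, like you, first invokes $L^2$-stability together with Theorem \ref{twistingthm} to guarantee that the Euler flow keeps twisting indefinitely. The divergence is in how the distance lower bound is extracted. The paper argues geometrically: from Theorem \ref{twistingthm} it produces two positive-measure sets $A(t),B(t)$ in the cover with $\dist_{\widetilde M}(A(t),B(t))\gtrsim t$, notes that any lift of any isotopy ending at $\Phi_t$ must land a fixed mass of particles in (translates of) each of them, and concludes that the $L^1$-length of the isotopy (hence the $L^2$-length) is $\gtrsim t$. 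You instead package the twisting into the single functional $\mathcal J$ and show directly, via Corollary \ref{isotopycor}, that $\tau\mapsto \mathcal J(\gamma_\tau)$ is $\|\dot\gamma_\tau\|_{L^2}$-Lipschitz along \emph{every} isotopy, with constant independent of $\gamma_\tau$; comparing this with the linear growth \eqref{growtht} along the Euler flow gives the bound. Your route is closer in spirit to the quasi-morphism/Calabi arguments the paper alludes to (the references to \cite{S1,BMS,M}), and it has the pleasant feature that the independence-of-isotopy step is isolated into a single clean application of Corollary \ref{isotopycor}, rather than a case analysis on lifts and far-apart sets. The normalization $\int F=\int G$ that makes $\mathcal J$ lift-independent is exactly the right observation; with it $\mathcal J(\id)=0$ and the constants are explicit. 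One small cosmetic point: in Step 1 you want $\|\omega_0-\omega_*\|_{L^2}\le \ve$ (rather than $\|\omega-\omega_*\|$), and in Step 2 the bound from Corollary \ref{isotopycor} reads $\|\partial_1\tilde\psi_\tau\|_{L^1(M)}$, consistent with \eqref{Rfbound}.
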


Mathematically, Theorem \ref{agethm} is the statement that, for any such $\omega_0$, the distance from the identity of the corresponding  flow map $\Phi(\cdot, t)$ measured by the $L^2$ metric (see discussion below) grows at least linearly in time.
The physical mechanism for this aging is indefinite twisting in the configuration space due to stable shearing, causing the  fluid to wrinkle in time.
 
Let us give some background. Fluid motion is governed by the ODE for $t \mapsto \Phi(\cdot, t)$ in $\Dm$:
\begin{equation}\label{ODiff} 
\begin{array}{ll}
\ddot \Phi(x,t) =-\nabla p\left(t,\Phi(x,t)\right) & \text{$(t,x)\in [0,T] \times M$,}\\
\Phi(x,0)=x &\text{$x\in M$,}\\
\Phi(\cdot,t)\in \Dm &\text{$t\in [0,T]$,}
\end{array}
\end{equation}
 The role of the acceleration (pressure gradient) is to confine the diffeomorphism $\Phi(\cdot, t)$ to $\Dm$.
One can view the configuration space $\Dm$  as an infinite-dimensional manifold with the
metric inherited from the embedding in $L^2(M;\mathbb{R}^2)$, and with tangent
space made by the divergence-free vector fields tangent to the boundary of $M$ \cite{EM}.  
Define the length of a path $\gamma_\cdot:[0,1]\mapsto  \Dm$  by
\be\label{length}
\mathscr{L}[\gamma]:= \int_{0}^{1} \|\dot{\gamma}_\tau(\cdot)\|_{L^2(M)} \rmd \tau.
\ee
See \cite{AK}.
We formally define the $L^2$ \textit{geodesic distance} connecting two states $ \varphi_0,\varphi_1\in \Dm$ by
\be
{\rm dist}_{\Dm}(\varphi_0,\varphi_1) = \inf_{\substack{ \gamma_\cdot:[0,1]\mapsto  \Dm\\
    \gamma(0)=\varphi_0, \ \gamma(1)=\varphi_1}}\mathscr{L}[\gamma].
 \ee
By the right-invariance of the $L^2$ metric, without loss of generality we take $\varphi_0= \id$ and call $\varphi_1=\varphi$.
 Arnold \cite{A} interpreted the Euler equation  \eqref{ODiff} for $t \mapsto \Phi(\cdot, t)$ as a \textit{geodesic} equation on $\Dm$, a critical point of the length functional $\mathscr{L}[\gamma]_{t_1}^{t_2}$. 
With this notion of distance, one can define the diameter of the group of area preserving diffeomorphisms as
\be
{\rm diam} (\Dm) := \sup_{\varphi_0,\varphi_1\in \Dm}{\rm dist}_{\Dm}(\varphi_0,\varphi_1).
\ee
One can think of ${\rm diam} (\Dm) $ as a measure of the capacity of $\Dm$ to store information. 
The celebrated result of Eliashberg and Ratiu \cite{ER} tells that
the diameter of $\Dm$ is infinite, resolving a conjecture of Shnirelman \cite{S} (who later gave an alternate proof \cite{S1}).
The notion of distance gives us a clock with which to measure the age of a diffeomorphism, introduced in Definition \ref{agedef}:

\begin{lemma} 
Fix $\varphi\in \Dm$  and let $t_{\mathsf{age}}(\varphi;\mathsf{E})$ be as in Definition \ref{agedef}.  Then
\be
t_{\mathsf{age}}(\varphi;\mathsf{E}) \geq {\rm dist}_{\Dm}(\varphi, \id)/ \sqrt{\mathsf{E}}.
\ee
\end{lemma}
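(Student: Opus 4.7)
The plan is a direct application of the Cauchy--Schwarz inequality combined with the reparametrization invariance of the length functional $\mathscr{L}$ defined in \eqref{length}. The heuristic is that the average kinetic energy controls the average squared speed, and by Jensen/Cauchy--Schwarz this in turn controls the average speed, which when multiplied by the elapsed time bounds the total length, and hence the geodesic distance.

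Concretely, fix any $T>0$ and any admissible path $\gamma_\cdot : [0,T]\to \Dm$ with $\gamma_0 = \id$, $\gamma_T = \varphi$ and $\tfrac{1}{T}\int_0^T \|\dot\gamma_\tau\|_{L^2(M)}^2 \rmd\tau \leq \mathsf{E}$. Since $\mathscr{L}$ is invariant under orientation-preserving reparametrizations, the geodesic distance satisfies
\be
{\rm dist}_{\Dm}(\id,\varphi) \leq \int_0^T \|\dot{\gamma}_\tau\|_{L^2(M)} \rmd\tau.
\ee
Applying Cauchy--Schwarz to the pairing of $\|\dot\gamma_\tau\|_{L^2(M)}$ with the constant $1$ on $[0,T]$, I would bound
\be
\int_0^T \|\dot{\gamma}_\tau\|_{L^2(M)} \rmd\tau \leq T^{1/2}\left(\int_0^T \|\dot{\gamma}_\tau\|_{L^2(M)}^2 \rmd\tau\right)^{1/2} \leq T\sqrt{\mathsf{E}}.
\ee
Combining the two inequalities gives ${\rm dist}_{\Dm}(\id,\varphi)\leq T\sqrt{\mathsf{E}}$ for every $T$ in the admissible class from Definition \ref{agedef}. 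Taking the infimum over such $T$ yields ${\rm dist}_{\Dm}(\id,\varphi)\leq t_{\mathsf{age}}(\varphi;\mathsf{E})\sqrt{\mathsf{E}}$, which is the claimed bound.

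There is essentially no obstacle here: the only subtle points are (i) the admissibility class in Definition \ref{agedef} is nonempty only for $\mathsf{E}$ sufficiently large (otherwise $t_{\mathsf{age}}=+\infty$ by convention and the inequality is vacuous), and (ii) one should verify that the infimum in the definition of ${\rm dist}_{\Dm}$, which in the excerpt is formulated on the time interval $[0,1]$, agrees with the infimum over paths parametrized on $[0,T]$; this follows because reparametrization by $\tau \mapsto \tau/T$ preserves $\mathscr{L}$. Apart from these book-keeping points, the proof is a one-line application of Cauchy--Schwarz.
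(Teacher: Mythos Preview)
Your proof is correct and is essentially the same as the paper's: both bound ${\rm dist}_{\Dm}(\id,\varphi)$ by the length of an admissible isotopy and then apply Cauchy--Schwarz in the time variable to convert the average speed into the square root of the average energy, yielding ${\rm dist}_{\Dm}(\id,\varphi)\le T\sqrt{\mathsf{E}}$ before taking the infimum over $T$. The only cosmetic difference is that the paper reparametrizes to $[0,1]$ first whereas you work directly on $[0,T]$; your remarks on reparametrization invariance handle exactly this point.
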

\begin{proof}
By definition since, for any isotopy $\gamma_{\cdot}:[0,1]\mapsto  \Dm$ with $\gamma_0=\id$, $\gamma_1=\varphi$, we have
\be
{\rm dist}_{\Dm}(\varphi,\id )\leq \mathscr{L}[\{\gamma_\tau\}_{\tau\in[0,1]}] 
 \leq T \left(\frac{1}{T} \int_0^T \|\dot{\gamma}_{\tau/T}(\cdot)\|_{L^2(M)}^2  \rmd t \right)^{1/2} \leq T \sqrt{\mathsf{E}}
\ee
as $\gamma_{\cdot/T}:[0,T]\mapsto  \Dm$ with $\gamma_0=\id$ and $\gamma_T=\varphi$ is an isotopy with imposed energy $\mathsf{E}$.
\end{proof}

The result of Eliashberg and Ratiu \cite{ER} on the infinite diameter implies the existence of a sequence $\phi_n\in \Dm$ such that $t_{\mathsf{age}}(\varphi_n;\mathsf{E})\to \infty$ as $n\to\infty$ for any fixed energy $\mathsf{E}$.  The sequence $\phi_n$ is, in fact, constructed by iterating the time-1 flowmap (a twist map) of a stationary solution of the Euler equation.  
We prove here that aging is a stable feature of Euler.

\begin{proof}[Proof of Theorem \ref{agedef}]  We give an  elementary proof for domains that are annular, $M=\mathbb{T}\times [0,1]$.  The same result would follow from existing results in the literature  (e.g. Theorem 2.3 of \cite{S1} as well as the theorems of  \cite{BMS,M}) using the result of \S \ref{highway}.

By Lemma \ref{twistingthm}, any path of diffeomorphisms $\Phi_\cdot :\mathbb{R}\to \Dm$ generated by any velocity field nearby a non-isochronal steady field has the property that: 
\begin{itemize}
\item
there are two regions $A(t)$ and $B(t)$ on the universal covering space  $\widetilde{M}=\mathbb{R}\times [0,1]$ of  $M$ such that $A(0),B(0)\subset {M}$ and there is a $c_0>0$ such that for all time $t\in \mathbb{R}$:
\be
{\rm dist}_{\widetilde{M}}(A(t),B(t))\geq c_0 t.
\ee
where ${\rm dist}_{\widetilde{M}}$ is the Euclidean distance between the sets $A$ and $B$ in $\widetilde{M}$.
Moreover, each set contains a fixed positive measure set of points that started in the fundamental domain $M$: e.g. there are positive constants $c_A$ and $c_B$ so that for all $t\in \mathbb{R}$, 
\be
|\tilde{\Phi}(M,t)\cap A(t)| > c_A\qquad |\tilde{\Phi}(M,t)\cap B(t)| > c_B \label{boxes}
\ee
where $\tilde{\Phi}(\cdot, t)$ is the unique lift of the isotopy $\Phi_\cdot\cdot :\mathbb{R}\to \Dm$ to the universal cover.
\end{itemize}

\begin{figure}[h!]
\centering
\begin{tikzpicture}[scale=.8, every node/.style={transform shape}]
		   \draw [thick] (-8,0)--(12,0);
		   \draw [thick] (-8,3)--(12,3);
		     \draw [dashed] (12,0)--(12,3);
		   \draw [dashed] (-8,0)--(-8,3);
		   \draw [dashed] (-4,0)--(-4,3);
      		   \draw [dashed] (0,0)--(0,3);
		   \draw [dashed] (4,0)--(4,3);
        		   \draw [dashed] (8,0)--(8,3);
	           \draw[fill, black!30!white, opacity=0.5] (-0,0)--(4,0)-- (4,3)--(-0,3)--cycle;
	           
      		   \draw [blue, dashed] (10,2.5)--(11,2.5);
      		   \draw [blue, dashed] (10,2)--(11,2);
		    \draw [blue, dashed] (10,2)--(10,2.5);
		     \draw [blue, dashed] (11,2)--(11,2.5);
		     \draw[fill, blue, opacity=0.3] (10,2.5)--(11,2.5)-- (11,2)--(10,2)--cycle;
		     
		                             \draw[thick, black, ->] (11.25,2.28)--(11.75,2.28);
		     
		    \draw [red, dashed] (-7,1)--(-6,1);
      		   \draw [red, dashed] (-7,0.5)--(-6,0.5);
		    \draw [red, dashed] (-7,0.5)--(-7,1);
		     \draw [red, dashed] (-6,0.5)--(-6,1);
		      \draw[fill, red, opacity=0.3] (-7,1)--(-6,1)-- (-6,0.5)--(-7,0.5)--cycle;
		      
		                  \draw[thick, black, ->] (-7.27,0.78)--(-7.75,0.78);
		      
		               \draw[thick, black, ->] (2,.5)--(1,.5);
                  \draw[thick, black, ->] (2,1)--(1.5,1);
         \draw[thick, black, ->] (2,1.5)--(2.5,1.5);
         \draw[thick, black, ->] (2,2)--(3,2);
          \draw[thick, black, ->] (2,2.5)--(3.5,2.5);
          \draw[thick, black, ->] (2,3)--(3,3);
                         \draw[thick, black, ->] (2,0)--(1.5,0);

           \draw  (-6.15,1.5) node[anchor=east] {\Large $A$};
           \draw  (10.85,1.5) node[anchor=east] {\Large $B$};
\end{tikzpicture}
\hspace{-.022\textwidth}		
\caption{Covering space of the cylinder. The fundamental domain pictured in gray.}
\label{fig:imagewrapcover}
\end{figure}
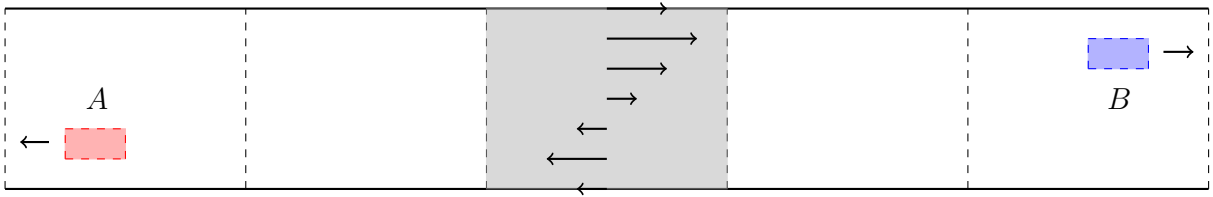

Fix now a time $T\in \mathbb{R}$ and a diffeomorphism $\Phi(\cdot, T)$. Consider any isotopy $\gamma_\cdot\cdot :[0,1]\to \Dm$ between $\id$ and $\Phi(\cdot, T)$.  Let $\tilde{\Phi}_T$ be \textit{any} lift of $\Phi(\cdot, T)$ to the universal cover.  
We estimate the length below by the $L^1$ length
\be
\mathscr{L}[\gamma] \geq \int_0^1 \int_M |\gamma_s(x)| \rmd x \rmd s =  \int_0^1 \int_M |\tilde{\gamma}_s(x)| \rmd x \rmd s
\ee
where $\tilde{\gamma}_t$ is unique  lift of the isotopy $\gamma_\cdot\cdot :[0,1]\to \Dm$ connecting $\tilde{\id}$ to $\tilde{\Phi}_T$.  The lower bound relates to the
$L^1$ length, which is just the average of the lengths of all paths $\{\gamma_s(x)\}_{s\in[0,1]}$.

Since any lift $\tilde{\Phi}_T$  can differ in the universal cover only  by a $2\pi$ multiple shift, in view of the property \eqref{boxes}, for either each $x\in \tilde{\Phi}_T(M)\cap A(t)$ or each $y\in \tilde{\Phi}_t(M)\cap B(t)$, the length of the curves is bounded below by the length of a straight line connecting the image point to the identity ${\rm len}(\tilde{\gamma}_\cdot(x))\geq c_0T$ so
\be\label{pathbound}
 \int_0^1 \int_M |\tilde{\gamma}_s(x)| \rmd x \rmd s\geq \min\{c_A,c_B\} c_0 T,
\ee
a bound which is independent of the connecting isotopy $\tilde{\gamma}$.  We complete the proof by noting
\be
{\rm dist}_{\Dm}({\Phi}_T,\id ) \geq \min\{c_A,c_B\} c_0 T.
\ee
\end{proof}

We end with the following\\

\noindent \textbf{Conjecture:}
Generic Euler flows $\Phi(\cdot, t)$ exhibit aging, e.g. $t_{\mathsf{age}}(\Phi(\cdot, t))\to\infty$.\\

It is not clear what notion of genericity should be taken. It might even hold in a very strong sense, such as: the only flows which do not exhibit aging are time periodic (isochronal) and these, in turn, are very rare.

\subsection{Complexity of the Lagrangian flowmap}

Next, we connect aging to a concept of complexity of the diffeomorphism introduced by Khesin, Misiolek and Shnirelman in
\cite{KMS22}.  Let $\exp_{\rm id}: T_{\rm id} \mathscr{D}_\mu^s (M)\to \mathscr{D}_\mu^s (M)$ be the $L^2$ exponential map, where $\mathscr{D}_\mu^s (M)$ is the Sobolev completion of the diffeomorphism group $\Dm$.  This map is a local diffeomorphism near the identity element ${\rm id}$ in $\Dm$. The Euler flow starting from initial velocity $u_0$ is $\Phi(t) = \exp_{\rm id}(t u_0)$.  Let us introduce an open neighborhood of radius $\ve$ in $H^s$:
\be
\mathsf{B}_\ve^s:= \{ v\in T_{\rm id} \mathscr{D}_\mu^s (M) \ : \  \|v\|_{H^s}< \ve\}.
\ee
Consider the set $\mathsf{U}_\ve= \exp_{\rm id}(\mathsf{B}_\ve^s)$ of all diffeomorphisms that can be reached by a time-one Euler flow with initial velocity in $\mathsf{B}_\ve^s$.  It can be proved that for any $\varphi\in \mathscr{D}_\mu^s (M)$ and each $\ve>0$, there exists a minimal finite $N_\ve:=N_\ve(\varphi)\in \mathbb{N}$ such that $\varphi$ can be represented exactly as a composition of finite number of elements in $\mathsf{U}_\ve$ (see \cite{Lukatskii1,Lukatskii2})
\be\label{decompostion}
\varphi = \eta_1 \circ \dots \circ \eta_{N_\ve}, \qquad \eta_i\in \mathsf{U}_\ve.
\ee
Define the \textit{complexity} of $\varphi\in \mathscr{D}_\mu^s (M)$ by 
\be
\mathcal{C}(\varphi) := \limsup_{\ve\to 0} (\ve N_\ve(\varphi)).
\ee
Khesin, Misiolek and Shnirelman conjectured (Problem 21 in \cite{KMS22}) that generic Euler flows will have $\mathcal{C}(\Phi(\cdot, t)) \simeq t$.  In what follows, we will establish a lower bound of this type.  To do so, we establish the following relation between complexity and the $L^2$ distance:
\begin{lemma}
Let $\varphi\in \mathscr{D}_\mu^s (M)$ be in the component of ${\rm id}$.  Then 
$\mathcal{C}(\varphi) \geq {\rm dist}_{\Dm}({\rm id},\varphi)$.
\end{lemma}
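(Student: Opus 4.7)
The plan is to estimate the $L^2$ geodesic distance from ${\rm id}$ to $\varphi$ by concatenating the Euler isotopies associated to an optimal factorization $\varphi = \eta_1\circ\cdots\circ\eta_{N_\ve}$ with each $\eta_i\in\mathsf{U}_\ve$, and then pass to the $\limsup$ in $\ve$. The key inputs are (i) right-invariance of the $L^2$ metric on $\Dm$ and (ii) conservation of kinetic energy along the Euler flow.

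First I would observe that each $\eta_i\in\mathsf{U}_\ve$ comes with a canonical isotopy to the identity: by definition $\eta_i=\exp_{\rm id}(v_i)$ for some $v_i\in\mathsf{B}_\ve^s$, so $\eta_i=\Phi^{(i)}_1$ where $\Phi^{(i)}_t$ is the Euler flow with initial velocity $v_i$. Since Euler flow conserves the $L^2$ norm of the velocity field, we have $\|\dot\Phi^{(i)}_t\|_{L^2(M)}=\|u^{(i)}_t\|_{L^2(M)}=\|v_i\|_{L^2}\le\|v_i\|_{H^s}<\ve$ for all $t\in[0,1]$ (for $s$ large enough that $H^s\hookrightarrow L^2$ with unit constant, or absorbing a harmless embedding constant into the final estimate). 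Hence the path $\{\Phi^{(i)}_t\}_{t\in[0,1]}$ is an isotopy from $\id$ to $\eta_i$ of $L^2$-length strictly less than $\ve$, and therefore
\[
{\rm dist}_{\Dm}({\rm id},\eta_i)<\ve.
\]

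Next I would concatenate these isotopies. Define $\gamma_i:=\eta_1\circ\cdots\circ\eta_i$ with $\gamma_0=\id$ and $\gamma_{N_\ve}=\varphi$. By right-invariance of the $L^2$ metric under composition with a volume-preserving diffeomorphism, the path $t\mapsto \Phi^{(i)}_t\circ\gamma_{i-1}$ from $\gamma_{i-1}$ to $\gamma_i$ has the same $L^2$-length as $\Phi^{(i)}_\cdot$, so ${\rm dist}_{\Dm}(\gamma_{i-1},\gamma_i)={\rm dist}_{\Dm}({\rm id},\eta_i)<\ve$. The triangle inequality then yields
\[
{\rm dist}_{\Dm}({\rm id},\varphi)\le\sum_{i=1}^{N_\ve}{\rm dist}_{\Dm}(\gamma_{i-1},\gamma_i)< N_\ve(\varphi)\,\ve.
\]
Since the left-hand side is independent of $\ve$, taking $\liminf$ (and a fortiori $\limsup$) as $\ve\to 0$ gives ${\rm dist}_{\Dm}({\rm id},\varphi)\le\limsup_{\ve\to 0}(\ve N_\ve(\varphi))=\mathcal{C}(\varphi)$, as required.

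The only genuine subtlety I foresee is bookkeeping between the $H^s$ and $L^2$ norms: the factorization \eqref{decompostion} controls each $v_i$ in $H^s$, whereas the length functional is $L^2$. This is harmless because Sobolev embedding gives $\|v_i\|_{L^2}\lesssim\|v_i\|_{H^s}$, and any embedding constant can be absorbed by rescaling the witness factorization (or, equivalently, replacing $\mathsf{B}_\ve^s$ by $\mathsf{B}_{c\ve}^s$ for a harmless constant $c$ at the end), which does not affect the limsup. Right-invariance of the $L^2$ metric and energy conservation along Euler—the only facts doing real work—are standard.
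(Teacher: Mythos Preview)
Your approach is the same as the paper's---concatenate the Euler isotopies for the factors $\eta_i$, use energy conservation to bound each segment's $L^2$-length by $\ve$, and sum---but there is a bookkeeping slip in the concatenation step. With your definition $\gamma_i:=\eta_1\circ\cdots\circ\eta_i$ one has $\gamma_i=\gamma_{i-1}\circ\eta_i$, whereas the path $t\mapsto\Phi^{(i)}_t\circ\gamma_{i-1}$ terminates at $\eta_i\circ\gamma_{i-1}$, which is not $\gamma_i$ in general. Consequently the equality ${\rm dist}_{\Dm}(\gamma_{i-1},\gamma_i)={\rm dist}_{\Dm}({\rm id},\eta_i)$ does not follow from right-invariance alone (it would require bi-invariance, which the $L^2$ metric does not enjoy).

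The fix is immediate: build the product from the right. Set $\tilde\gamma_k:=\eta_{N_\ve-k+1}\circ\cdots\circ\eta_{N_\ve}$, so that $\tilde\gamma_k=\eta_{N_\ve-k+1}\circ\tilde\gamma_{k-1}$ and $\tilde\gamma_{N_\ve}=\varphi$. Then the right-translated path $t\mapsto\Phi^{(N_\ve-k+1)}_t\circ\tilde\gamma_{k-1}$ genuinely connects $\tilde\gamma_{k-1}$ to $\tilde\gamma_k$ and, by right-invariance, has $L^2$-length $<\ve$. Summing gives ${\rm dist}_{\Dm}({\rm id},\varphi)<\ve N_\ve$ as you intended. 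This is precisely the order in which the paper concatenates (applying $\eta_{N_\ve}$ first, then $\eta_{N_\ve-1}$, and so on), and with this correction the rest of your argument goes through verbatim.
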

\begin{proof}
First note that by \eqref{decompostion}, we may define a time-one isotopy $\gamma_s:\mathscr{D}_\mu^s\to \mathscr{D}_\mu^s$ of $\varphi$ to the identity, e.g. $\gamma_0 = {\rm id}$ and $\gamma_1 =\varphi$, via
\be
\dot{\gamma}_s = v({\gamma}_s,s), \qquad v(x,s):= N_\ve \begin{cases}
    v_{N_\ve}(x,s) \qquad 0\leq t < \tfrac{1}{N_\ve}\\
   \qquad \vdots\\
    v_{1}(x,s) \qquad 1-\tfrac{1}{N_\ve} \leq t \leq 1
\end{cases}
\ee
where $v_i$ are the velocity fields in $\mathsf{B}_\ve^s$ which generate $\eta_i$.  As such $\|v_i\|_{L^2}\leq \|v_i\|_{H^s}<\ve N_\ve$ for each $i$. Then, by definition 
\begin{align}
{\rm dist}_{\Dm}({\rm id},\varphi) \leq \int_{0}^{1} \|\dot{\gamma}_\tau(\cdot)\|_{L^2(M)} \rmd \tau =\sum_{i=0}^{N_\ve-1}\int_{t_i}^{t_{i+1}} \|v_i(\cdot,\tau)\|_{L^2(M)} \rmd \tau  \leq \ve N_\ve,
 \end{align}
 since $t_{i+1}-t_{i}= \tfrac{1}{N_\ve}$. The claim follows.
\end{proof}
As a corollary of Theorem \ref{agethm}, we have
\begin{corollary}
    In the setting of Theorem  \ref{agethm}, the complexity of all Euler solutions grows
    \be
    \mathcal{C}(\Phi(\cdot, t)) \geq C t.
    \ee
\end{corollary}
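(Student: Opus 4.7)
The proof is essentially a one-line combination of the two ingredients immediately preceding the corollary, and the plan is to make this explicit. First I would invoke Theorem \ref{agethm}: since the hypotheses of the corollary place us in its setting (an annular surface $M$, a non-isochronal $L^2$-stable steady state $\omega_*$, and initial vorticity $\omega_0$ in a sufficiently small $L^2$-neighborhood of $\omega_*$), the theorem yields a constant $c = c(\omega_*) > 0$ such that
\begin{equation}
{\rm dist}_{\Dm}(\mathrm{id}, \Phi_t) \geq c\, t \qquad \text{for all } t \geq 0,
\end{equation}
where $\Phi_t$ is the Lagrangian flow map of the Euler solution emanating from $\omega_0$.

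Next I would apply the Lemma just proved, which compares complexity to $L^2$ geodesic distance on $\Dm$: for any $\varphi$ in the connected component of the identity in $\mathscr{D}_\mu^s(M)$,
\begin{equation}
\mathcal{C}(\varphi) \geq {\rm dist}_{\Dm}(\mathrm{id}, \varphi).
\end{equation}
Because the Euler flow $\Phi_t = \exp_{\mathrm{id}}(t u_0)$ is generated by a smooth, divergence-free, boundary-tangent initial velocity and lies on the path $s\mapsto \Phi_s$ of $\Dm$ starting at $\mathrm{id}$, it belongs to the identity component, so the lemma applies with $\varphi = \Phi_t$. Combining the two inequalities gives $\mathcal{C}(\Phi_t) \geq c\, t$, which is precisely the claim with $C = c(\omega_*)$.

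There is no real obstacle here: the work has already been done in Theorem \ref{agethm} (via the stability of twisting Theorem \ref{twistingthm}, which forces linear separation of positive-measure sets of particles in the universal cover) and in the preceding lemma (which converts the finite decomposition \eqref{decompostion} into an explicit isotopy of length at most $\varepsilon N_\varepsilon$). The only point worth remarking on in the write-up is that the lemma's bound $\mathcal{C}(\varphi) \geq {\rm dist}_\Dm(\mathrm{id},\varphi)$ is really a bound on $\limsup_{\varepsilon \to 0}(\varepsilon N_\varepsilon(\varphi))$, so the resulting lower bound $\mathcal{C}(\Phi_t) \geq C t$ is in the same $\limsup$ sense; this is what was conjectured by Khesin--Misiolek--Shnirelman. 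I would close by noting that the constant $C$ depends only on $\omega_*$ (through the travel-time difference $v_*(y_1) - v_*(y_2)$ used in the proof of Theorem \ref{twistingthm}) and not on the particular perturbation $\omega_0$ within the $\varepsilon$-ball, so the lower bound is uniform over the open set of initial data described in Theorem \ref{agethm}.
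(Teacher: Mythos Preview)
Your proposal is correct and matches the paper's intended argument exactly: the corollary is stated immediately after the lemma $\mathcal{C}(\varphi) \ge {\rm dist}_{\Dm}({\rm id},\varphi)$ with the phrase ``As a corollary of Theorem \ref{agethm}, we have,'' and no further proof is given because combining that lemma with the bound ${\rm dist}_{\Dm}({\rm id},\Phi_t)\ge C t$ from Theorem \ref{agethm} is all that is required.
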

It would be interesting also to connect this increased complexity to the possible lack of conjugate points along the flow, see e.g. \cite{Mconj,DM}.

\subsection{Filamentation, Spiraling and Wandering in Perfect Fluids}
This section is devoted to establishing some simple consequences of the Theorem \ref{twistingthm} related to the Eulerian picture of long-time behavior of solutions to the 2d Euler equation. 

\subsubsection{Generic loss of smoothness}

We begin by remarking that Theorem \ref{twistingthm} allows us to prove that the Eulerian vorticity of solutions nearby stable steady states generically becomes filamented:

\begin{theorem}[Generic Vorticity Filamentation]\label{gengrowth}
Let $M$ be an annular surface (or any of the settings mentioned in \S\ref{sect:ext}).
Fix $\alpha>0$ and let $\omega_*\in {C^{\alpha}(\overline{M})}$ be a non-isochronal stable Euler solution. Then there exists $\ve>0$ such that the set of initial data   
\be\label{Dset}
\left\{\omega_0\in B_\ve^{C^\alpha}(\omega_*)\ :  \  \sup_{t \ge 1 } \frac{\|\omega(t)\|_{C^\alpha}}{ |t|^{\alpha-}}= +\infty  \right\}
\ee is dense (in the strong $C^\alpha$ topology) in $ B_\ve^{C^\alpha}(\omega_*)$.
\end{theorem}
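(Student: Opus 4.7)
The plan is to combine the quantitative twisting from Theorems \ref{twistingthm} and \ref{twistingthm2} with the Lagrangian transport identity $\omega(t) = \omega_0 \circ \Phi_t^{-1}$. First, I would verify that the Euler flow of every $\omega_0 \in B_\varepsilon(\omega_*)$ falls under the scope of these twisting theorems. By Lyapunov stability of $\omega_*$, $\sup_t\|\omega(t) - \omega_*\|_{L^2(M)} \lesssim \varepsilon$, which bounds the first term in $\|\psi(t) - \psi_*\|_{\mathsf A}$ directly. For the second term, the identity $u_*\cdot\nabla\psi = (u_* - u)\cdot\nabla\psi$ (valid since $u\cdot\nabla\psi = \nabla^\perp\psi\cdot\nabla\psi \equiv 0$) combined with Cauchy--Schwarz gives $\|\nabla^\perp\psi_*\cdot\nabla\psi\|_{L^1(\mathsf A)} \leq \|u - u_*\|_{L^2}\|\nabla\psi\|_{L^2}\lesssim \varepsilon$. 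Hence Theorem \ref{twistingthm} applies, and particles near two streamlines of distinct travel times separate at a linear rate.

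Second, I would define the dense subset $\mathcal{D}\subset B_\varepsilon(\omega_*)$ as those $\omega_0$ whose restriction to some non-isochronal streamline $\{\psi_* = c_0\}$ is not constant; for any $\omega_0\in B_\varepsilon$ and any $\tau > 0$, adding a bump $\lambda\chi$ of norm less than $\tau$ whose angular profile on $\{\psi_* = c_0\}$ has nonzero oscillation produces an element of $\mathcal{D}$ within $C^\alpha$-distance $\tau$ of $\omega_0$, establishing density. For $\omega_0\in\mathcal{D}$, the key claim is that for every $\eta > 0$ there is a sequence $t_n\to\infty$ with $\|\omega(t_n)\|_{C^\alpha}\gtrsim t_n^{\alpha(1-\eta)}$. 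Consider first the heuristic autonomous case where $\omega_0$ is merely passively advected by the steady shear $u_* = (v_*(y), 0)$, so $\omega(t, x, y) = \omega_0(x - v_*(y)t, y)$. Comparing $\omega(t)$ at $(x^*, y_0 + \delta/2)$ and $(x^*, y_0 - \delta/2)$ gives Eulerian distance $\delta$ and an internal $\omega_0$-argument shift of $v_*'(y_0)\delta t$. Choosing $\delta_n = t_n^{-(1-\eta)}$ makes the shift $\sim v_*'(y_0) t_n^\eta$, and selecting $t_n\to\infty$ with $v_*'(y_0) t_n^\eta \equiv \pi \pmod{2\pi}$ (possible since $t\mapsto v_*'(y_0)t^\eta$ diverges continuously) ensures the shift is order-one modulo $2\pi$; combined with non-constancy of $\omega_0(\cdot, y_0)$ this yields an order-one $\omega$-difference and hence $[\omega(t_n)]_{C^\alpha}\gtrsim \delta_n^{-\alpha} = t_n^{\alpha(1-\eta)}$. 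For the actual non-autonomous Euler flow, one would replace the explicit autonomous trajectory by the estimate $\|\Phi_1(t) - x_1 - v_*(\Phi_2(t))t\|_{L^2}\lesssim\sqrt{\varepsilon}\,t$ from Theorem \ref{twistingthm2} and recover the same pairing on a positive-measure subset of starting positions $x^*$.

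The main obstacle is bridging the $L^2$-averaged twisting estimate (error $\sqrt{\varepsilon}\,t$) and the pointwise displacement needed to realize a single pair of Eulerian points with Hölder ratio $\gtrsim t^{\alpha(1-\eta)}$. I would address this via a Chebyshev-type argument to select $x^*$ on a set of positive (though shrinking) measure on which the Lagrangian deviation at both $(x^*, y_0 \pm \delta_n/2)$ from the autonomous shear orbit is below a threshold small enough to preserve the order-one $\omega_0$-argument shift of the autonomous computation. This measure-shrinking forces the loss of a small power of $t$, accounting for the ``minus'' in the $t^{\alpha-}$ rate and for the density (rather than full-measure) nature of the set in \eqref{Dset}; the statement is consistent with the $\sup$ formulation, since our construction produces data whose $C^\alpha$ norm exceeds $t^{\alpha-\eta}$ along an infinite sequence of times rather than monotonically.
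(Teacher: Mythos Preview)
Your Chebyshev step in the third paragraph does not close. The estimate from Theorem~\ref{twistingthm2} gives an $L^2$ error of size $C\sqrt{\ve}\,t$, which grows at the \emph{same linear rate} in $t$ as the total winding itself. You want to resolve, at two specific Eulerian points a distance $\dlt_n=t_n^{-(1-\eta)}$ apart, an $x_1$-displacement of their preimages that is $O(1)$ modulo $2\pi$ (the half-period shift). But Chebyshev applied to an $L^2$ function of norm $\sqrt{\ve}\,t$ only tells you that the set where the pointwise deviation exceeds $\lambda$ has measure $\lesssim \ve t^2/\lambda^2$; to make this deviation $O(1)$ you would need $\lambda=O(1)$, so the exceptional set has measure $\gtrsim \ve t^2\to\infty$. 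Allowing the ``good'' set to shrink polynomially in $t$ does not help: even on a set of measure $t^{-\beta}$ the best pointwise bound you can extract is $\sqrt{\ve}\,t\cdot t^{\beta/2}$, which is still $\gg 1$. In short, the $L^2$ twisting estimate cannot be upgraded to the pointwise control your pairing argument requires, and no power of $t$ is recovered.

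The paper's route (via Koch's idea, as in Corollary~3.27 of \cite{DE} and visible in the proof of Theorem~\ref{wanderingthm} here) avoids this entirely by working with \emph{level sets} rather than individual trajectories. One perturbs $\omega_0$ so that it possesses two closed level curves $\gmm_1,\gmm_2$ of distinct values, each enclosing almost the full annulus. The local twisting Theorem~\ref{twistingthm} then produces positive-measure sets $\mathsf{A}(t),\mathsf{B}(t)$ inside both enclosed regions whose lifts are separated by $\sim t$ on the cover; by continuity of the flow, each curve $\Phi_t(\gmm_i)$ must itself wind $\sim t$ times. Two disjoint simple curves winding $\sim t$ times in a bounded annulus are forced within distance $\lesssim 1/t$ of each other (cf.\ \eqref{Closeness} and Lemma~3.15 of \cite{DE}), yielding $[\omega(t)]_{C^\alpha}\gtrsim t^\alpha$ directly. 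This topological mechanism converts the averaged (positive-measure) twisting into genuine pointwise proximity without any Chebyshev loss, which is why the argument succeeds where tracking pairs of points fails.
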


\begin{remark}
    If $M$ is the flat torus, there are no known Lyapunov stable steady states of the Euler equation. On the other hand, if one considers generic metrics on genus on tori, then the eigenspaces are simple \cite{U}, and thus there again exist infinite dimensional families of Arnold stable steady states to which our theorem may be applied. 
\end{remark}


This result validates a conjecture of Yudovich on generic deterioration of regularity for the Euler equations as it applies to neighborhoods of stable steady states \cite{Y1,Y2,MSY}.
The proof of Theorem \ref{gengrowth}  follows exactly the same lines as Corollary 3.27 of \cite{DE} (based on an idea of Koch \cite{Koch}, see also Margulis, Shnirelman and Yudovich \cite{MSY}) and therefore is not reproduced here.  It should be noted that, relative to Corollary 3.27 of \cite{DE} (which applies only to non-isochronal stable steady states on annular domains having different boundary velocities), there are no conditions in Theorem \ref{gengrowth} on the behavior on the boundary and the allowed domains $M$ are more general (in particular, we do not require they have boundary at all).  The key is that the stability of twisting established in our Theorem \ref{twistingthm} does not require differential boundary rotation. 

\subsubsection{Wandering}

The goal of this subsection is to use Theorem \ref{twistingthm}  to establish the existence of wandering neighborhoods in the $L^\infty$ topology for the 2d Euler equation nearby (in $L^2$) any Arnold stable steady.
 This was first proved by Nadirashvili nearby Couette flow \cite{N}. Our results are directly inspired by Nadirashvili's example and can be understood as a generalization of it to time-dependent annular regions constrained to be near two levels of
an appropriate stream function.   See also a generic wandering statement in an extended state space by Shnirelman \cite{S2} as well as the work of Khesin, Kuksin and Peralta-Salas \cite{KKPS} for 3d analogues.
 We restrict ourselves to the case of the periodic channel $\mathbb{T}\times [0,1]$ for simplicity of exposition. Our arguments can easily be extended to many other domains (see Section \ref{sect:ext}), even manifolds without boundary. 
In the following, we denote by $S_t:L^\infty \to L^\infty$ the solution map to the 2d Euler equation that sends a given initial data $\omega_0\in L^\infty(M)$ to its solution at time $t$. 

\begin{theorem}[Wandering Neighborhoods in 2d Euler]\label{wanderingthm}
Fix $M=\mathbb{T}\times [0,1]$. Let $\omega_*$ be an $L^2$ Lyapunov stable and non-constant shear flow on $M.$ Given any $\ve>0$, there exists a smooth $\omega$ with $\|\omega-\omega_*\|_{L^2}<\ve$ and an open neighborhood $\mathcal{U}$ in $L^\infty$ of $\omega$ with the property that
\[S_t(\mathcal{U})\cap \mathcal{U}=\emptyset,\] for any $t>C_*(\omega_*).$  That is, $\mathcal{U}$ is a wandering neighborhood. Moreover, every element of $S_t(\mathcal{U})$ exhibits at least linear growth of (i) the length of its level sets and (ii) its vorticity gradient.
\end{theorem}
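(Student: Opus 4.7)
I would follow Nadirashvili's original approach, generalized from Couette to any non-isochronal stable shear $\omega_*$ using Theorem \ref{twistingthm}. Take a small smooth disk $B \subset M$ of radius $r$ centered at a height $y_0$ with $v_*'(y_0)\neq 0$, and set $\omega := \omega_* + \delta \tilde{\chi}_B$ where $\tilde{\chi}_B$ is a smooth bump equal to $1$ on an interior sub-disk and supported in $B$. Choose $\delta$ small so that $\|\omega-\omega_*\|_{L^2}<\varepsilon$, and define $\mathcal{U} := \{\tilde\omega\in L^\infty(M): \|\tilde\omega-\omega\|_{L^\infty}<\eta\}$ with $\eta \ll \delta$ to be tuned at the end. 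The idea is that, under the twisting of Theorem \ref{twistingthm}, the evolved super-level set $\{\omega_1(t)>\bar c\}$ becomes a long thin filament wrapping around the cylinder $\sim t$ times, which is incompatible with $\omega_1(t)$ being $\eta$-close in $L^\infty$ to the original $\omega$, whose analogous super-level set is essentially $B$.

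First verify that Theorem \ref{twistingthm} applies \emph{uniformly} across $\mathcal{U}$. By $L^2$-Lyapunov stability of $\omega_*$, every $\omega_1\in \mathcal{U}$ yields $\|\omega_1(t)-\omega_*\|_{L^2}\lesssim \varepsilon$ for all $t\in\mathbb{R}$; Biot--Savart upgrades this to $\|\psi_1(t)-\psi_*\|_{H^1}\lesssim \varepsilon$, and since $\psi_*=\psi_*(y)$ is a shear one has $\nabla^\perp \psi_*\cdot \nabla\psi_1 = u_*\cdot \nabla(\psi_1-\psi_*)$, whose $L^1$-norm is $\lesssim \varepsilon$ as well. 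Hence $\|\psi_1-\psi_*\|_{\mathsf{A}}\lesssim \varepsilon$ uniformly in $t$, so Theorem \ref{twistingthm} gives $\|\nabla \Phi_t^{\omega_1}\|_{L^1}\gtrsim t$ and linear diameter growth of $\widetilde{\Phi_t^{\omega_1}}(B)$ on the universal cover, with implicit constants depending only on $\omega_*$. The same bounds also give $\|\psi_1-\psi_*\|_2\lesssim \varepsilon$, activating Theorem \ref{twistingthm2}.

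Now set $\bar c := \max\omega_* + \delta/2$, $E_1:=\{\omega_1>\bar c\}$, and $F_t:=\Phi_t^{\omega_1}(E_1)=\{\omega_1(t)>\bar c\}$. For $\omega_1\in \mathcal{U}$ with $\eta\ll \delta$, $E_1$ coincides with $B$ modulo a thin shell of measure $\lesssim \eta r/\delta$, and $|F_t|=|E_1|\approx \pi r^2$ by measure preservation. Meanwhile, the shear-case bound $\|\Phi_1^{\omega_1}(t)-x_1-v_*(\Phi_2^{\omega_1}(t))t\|_{L^2}\lesssim \sqrt{\varepsilon}\,t$ from Theorem \ref{twistingthm2}, together with $v_*'(y_0)\neq 0$ (ensuring the shifts $v_*(y)t\bmod 2\pi$ for $y\in(y_0-r,y_0+r)$ are well distributed once $|v_*'(y_0)|rt\gg 2\pi$), yields via a Chebyshev-type layer-cake argument $|F_t\cap B|\lesssim r^3 + \sqrt{\varepsilon}\,rt$, which is $o(|B|)$ once $\varepsilon$ is taken small relative to $r$. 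Therefore $|F_t\triangle B|\geq |B|/2$ for $t>C_*(\omega_*)$. On the other hand, if $\omega_1(t)\in \mathcal{U}$ then $F_t\triangle \{\omega>\bar c\}\subset \{|\omega-\bar c|<\eta\}$ has measure $\lesssim \eta r/\delta$, which contradicts the previous bound once $\eta\ll \delta r$. This produces $S_t(\mathcal{U})\cap \mathcal{U}=\emptyset$ for all $t>C_*(\omega_*)$. The level-set length and gradient-growth claims then fall out: $\partial F_t = \Phi_t^{\omega_1}(\partial E_1)$ is a closed curve whose winding number around the cylinder grows $\sim t$, forcing length $\gtrsim t$; and $\nabla \omega_1(t)=(D\Phi_t^{\omega_1})^{-\top}(\nabla \omega_1)\circ (\Phi_t^{\omega_1})^{-1}$ combined with the $L^1$-gradient growth of the flow gives $\|\nabla \omega_1(t)\|_{L^1}\gtrsim t$, using the nondegeneracy of $\nabla \omega_1$ on a collar of $\partial E_1$.

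The main obstacle is the quantitative bound $|F_t\cap B|\ll |B|$ \emph{uniformly} in $t>C_*(\omega_*)$ and in $\omega_1\in \mathcal{U}$, rather than only at generic $t$. For the pure shear, ``resonant'' times $t$ with $v_*(y_0)t\in 2\pi\mathbb{Z}$ would return the center of $B$ to itself, and only the spread in $y$ across $B$ together with $v_*'(y_0)\neq 0$ defeats these resonances; after perturbation, the error $\sqrt{\varepsilon}\,t$ from Theorem \ref{twistingthm2} competes with the shear-resonance gain and is only favorable when $\varepsilon$ is chosen very small relative to $r$. The parameter tuning must therefore proceed in the order $r$, then $\delta$, then $\varepsilon$, then $\eta$, and finally $C_*$, with all constants depending only on $\omega_*$; this bookkeeping is delicate but is enabled by the uniform applicability of Theorems \ref{twistingthm} and \ref{twistingthm2} over $\mathcal{U}$.
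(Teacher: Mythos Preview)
Your approach of tracking a small bump has a genuine gap that no amount of parameter tuning repairs. The key step is the claim that $|F_t\cap B|\lesssim r^3+\sqrt{\ve}\,rt$ is $o(|B|)$ once $\ve$ is small relative to $r$; but the second term \emph{grows} linearly in $t$, so it cannot stay below $|B|\sim r^2$ for \emph{all} $t>C_*$. At best you obtain wandering on the bounded window $C_*<t\lesssim r/\sqrt{\ve}$. The underlying issue is structural: Theorem~\ref{twistingthm2} controls $\Phi_1(t)-x_1-v_*(\Phi_2(t))t$ only in $L^2(M)$, and nothing prevents the entire $\sqrt{\ve}\,t$ worth of $L^2$ error from concentrating on your small set $B$ of measure $\sim r^2$. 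A Chebyshev argument gives $|\{|e|>K\}|\le \ve t^2/K^2$, and once $t\gtrsim r/\sqrt{\ve}$ this exceptional set can swallow all of $B$, leaving no control on where $\Phi_t(B)$ lands modulo $2\pi$. The same localization failure undermines your winding claim for $\partial F_t$ and your $L^1$ gradient bound for $\omega_1(t)$ (Theorem~\ref{twistingthm} gives $\|\nabla\Phi_t\|_{L^1(M)}\gtrsim t$, but the growth need not occur on the thin collar where $\nabla\omega_1\ne 0$).

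The paper's proof sidesteps this by making the distinguished level sets $\gamma_1,\gamma_2$ \emph{large}: they are closed curves in $M$ whose enclosed regions $\Gamma_1,\Gamma_2$ have nearly full area. The positive-measure ``fast'' and ``slow'' sets $A(t),B(t)$ produced by Theorem~\ref{twistingthm} then automatically intersect each $\Gamma_i$ (their complements are too small to avoid them), so each level curve $\Phi_t(\gamma_i)$ is topologically forced to straddle both $A(t)$ and $B(t)$ on the cover. This yields, for every $t$, a pair of points on $\Phi_t(\gamma_1)$ and $\Phi_t(\gamma_2)$ at distance $\le C/t$ in $M$; since $\omega_0$ takes well-separated values on these curves while $\bar\omega_0$ is continuous, $\|S_t(\omega_0)-\bar\omega_0\|_{L^\infty}$ stays bounded away from zero. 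The mechanism is topological rather than measure-theoretic and is therefore immune to the concentration problem that breaks the small-bump approach.
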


\begin{proof}
    We fix $\omega_*$ and assume that $y_1,y_2\in (0,1)$ are such that $u_*(y_1)<u_*(y_2).$ Now take $\bar\omega_0$ to be within $\ve$ of $\omega_*$ in $L^2$ and having two level sets with distinct values, say $1$ and $2$, that are simple closed curves in $(-\pi,\pi)\times (0,1)$ and both enclosing the region $[-\pi+\kappa,\pi-\kappa]\times [\kappa,1-\kappa]$, for some $\kappa>0$ small\footnote{Strictly speaking, we could take the level sets to be straight lines connecting $y=0$ and $y=1$, but we avoid this since we are writing the proof to be applicable to situations without boundary.}.  Call these two level sets $\gamma_1$ and $\gamma_2$ and denote their ``insides" by $\Gamma_1$ and $\Gamma_2.$ Note that, for any small $L^\infty$ perturbation of $\omega,$ the values of $\omega$ on $\gamma_1$ are less $\frac{5}{4}$, while the values on $\gamma_2$ are strictly greater than $\frac{7}{4}$. Now fix $\omega_0\in B_{\ve}(\bar\omega_0)$ (the $L^\infty$ ball). Let $\Gamma_i(t)$ be the image under the (lifted) flow $\Phi(\cdot, t)$ associated to $S_t(\omega_0)$.  Taking $\ve$ sufficiently small and $t$ sufficiently large, we deduce by  the Lyapunov stability of $\omega_*$, the continuity of $u_*$, and Theorem \ref{twistingthm} , the existence of two positive measure sets $A(t),B(t)\subset M$, with $|A(t)|,|B(t)|>c_1$ and 
    \[\Phi_1(z,t)<t\left(u_*(y_1)+\frac{u_*(y_2)-u_{*}(y_1)}{4}\right)\qquad y_1-c_2<\Phi_2(z,t)<y_1+c_2\qquad \forall z\in A(t),\]
     \[\Phi_1(z,t)>t\left(u_*(y_2)-\frac{u_*(y_2)-u_{*}(y_1)}{4}\right)\qquad y_2-c_2<\Phi_2(z,t)<y_2+c_2\qquad \forall z\in B(t),\] with $c_1$ and $c_2$ small independent of $\kappa$ and $\ve$, provided $\ve$ is sufficiently small. Now observe that if $\kappa$ is sufficiently small, we have
     \[A(t)\cap \Gamma_i\not=\emptyset\not=B(t)\cap \Gamma_i \qquad  \text{for} \ i=1,2.
     \]
     Whence, it is not difficult to conclude that the length of the level curves $\Phi(\gamma_i,t)$ grows at least linearly in time and that
     \be\label{Closeness}\inf_{z_1\in\gamma_1, z_2\in\gamma_2}|\Phi(z_1,t)-\Phi(z_2,t)|\leq \frac{C}{t},\ee for all $t$ sufficiently large (see for example the proof of Lemma 3.15 of \cite{DE}). This, in particular implies that $\|\nabla S_t(\omega_0)\|_{L^\infty}\geq ct$ for all $t$ sufficiently large for every $\omega_0\in B_\ve(\bar\omega_0).$ Moreover, by \eqref{Closeness} it follows that \[\|\bar\omega_0-S_t(\omega_0)\|_{L^\infty}>\frac{1}{4}\] for all $t$ sufficiently large, since $\bar\omega_0$ is continuous.  This concludes the proof of wandering. 
\end{proof}

\subsubsection{Unbounded gradient growth in the SQG equation}
For another application of Theorem \ref{twistingthm}, consider the family of equations:
\be\label{SQG1}
\partial_t \omega + u\cdot\nabla\omega=0,
\ee
\be
\label{SQG2}
u=\nabla^\perp (- \Delta)^{- \alpha} \omega 
\ee on $\mathbb{T}^2$. Here $\alpha \in [\frac{1}{2},\infty)$ is a parameter that interpolates between the 2d Euler equation $(\alpha=1)$ and the SQG equation $(\alpha=\frac{1}{2}).$
It is well known that the SQG equation is locally well-posed in the Sobolev spaces $H^s$ when $s>2$. An open problem is to establish unbounded gradient growth in the SQG equation in finite or infinite time (see the statements and discussion in \cite{KiselevActiveScalars,HeKiselev,KN,DenisovSuperLinear,CJ2,Z,KS}). Arbitrarily large, but finite, growth was established by Kiselev and Nazarov \cite{KN} while infinite growth for higher norms was established recently by He and Kiselev \cite{HeKiselev}. A major difficulty in establishing growth is the relatively weak control that the conservation laws of $\omega$ give on the velocity field $u.$ In particular, we do not even have an \textit{a-priori} pointwise bound on the velocity field $u$. An important aspect of the stability of twisting theorem \ref{twistingthm2} is that we only require $L^2$ control on the velocity field. Relying on this, we will be able to establish unbounded gradient growth in the SQG equation with relative ease.

Let us start with some basic observations. The first is that the steady state $\sin(y)$ is stable in the sense of Arnold \cite{A,AK}, once we restrict to a symmetry class.  

\begin{lemma}
In the class of classical solutions $\omega$ that are odd in $y$ on $\mathbb{T}^2,$ the steady state $\omega_*=\sin(y)$ is Lyapunov stable in $L^2.$
\end{lemma}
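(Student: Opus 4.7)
I would use a classical Arnold-type variational stability argument based on two conserved quantities of the active scalar equation \eqref{SQG1}--\eqref{SQG2}: the $L^2$ norm $\|\omega\|_{L^2}^2$ (a special Casimir $\int F(\omega)$, conserved because $\omega$ is transported by a divergence-free flow, so in fact \emph{every} Casimir is conserved), and the Hamiltonian energy
\[ E(\omega) := \langle \omega, (-\Delta)^{-\alpha}\omega\rangle_{L^2}. \]
The crucial algebraic fact is that $-\Delta \sin y = \sin y$, so $(-\Delta)^{-\alpha}\sin y = \sin y$; this is precisely what makes $\omega_* = \sin y$ a steady state, since the stream function is then proportional to $\omega_*$.

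\textbf{Main step.} I would write $\omega = \sin y + \eta$ with $\eta$ odd in $y$ (hence mean zero, so that $(-\Delta)^{-\alpha}\eta$ is well-defined), and expand both conserved functionals. Using the eigenvalue identity above, the cross terms in $\|\omega\|_{L^2}^2$ and in $E(\omega)$ agree, so upon subtracting one obtains the conservation law
\[ Q(\eta(t)) := \|\eta(t)\|_{L^2}^2 - \langle \eta(t), (-\Delta)^{-\alpha}\eta(t)\rangle = \sum_{k\in\mathbb{Z}^2\setminus\{0\}}\bigl(1 - |k|^{-2\alpha}\bigr)|\widehat{\eta}_k|^2 = Q(\eta(0)). \]
For $\alpha \ge 1/2$, each coefficient is $\ge 0$ on $|k|\ge 1$ and $\ge 1 - 2^{-\alpha}>0$ on $|k|\ge\sqrt{2}$, so $Q$ is coercive off the degenerate sector $\{|k|=1\}$. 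This is where the odd-in-$y$ symmetry enters decisively: it eliminates $\cos y,\ \sin x,\ \cos x$, leaving $\sin y$ as the \emph{only} $|k|=1$ mode. Decomposing $\eta = a(t)\sin y + \eta_\perp(t)$, one immediately obtains
\[ \|\eta_\perp(t)\|_{L^2}^2 \le (1 - 2^{-\alpha})^{-1} Q(\eta(0)) \le C \|\eta(0)\|_{L^2}^2. \]

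\textbf{Finish.} To control the remaining scalar $a(t)$, I would invoke conservation of $\|\omega\|_{L^2}^2$, which (using orthogonality) reads
\[ (1+a(t))^2 \|\sin y\|_{L^2}^2 + \|\eta_\perp(t)\|_{L^2}^2 = (1+a(0))^2 \|\sin y\|_{L^2}^2 + \|\eta_\perp(0)\|_{L^2}^2. \]
Combined with the bound on $\|\eta_\perp\|_{L^2}$, this forces $(1+a(t))^2 = (1+a(0))^2 + O(\|\eta(0)\|_{L^2}^2)$. A short bootstrap in $t$, using continuity of $a(\cdot)$ and positivity of $1+a(0)$ for sufficiently small initial data, fixes the sign of the square root and delivers $|a(t) - a(0)| = O(\|\eta(0)\|_{L^2}^2)$, hence $|a(t)| \le C\|\eta(0)\|_{L^2}$, and finally $\|\eta(t)\|_{L^2} \le C\|\eta(0)\|_{L^2}$. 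The main (and essentially only) obstacle is the degeneracy of $Q$ along the $\sin y$ direction, and this is exactly what the odd-in-$y$ class is tailored to handle: without that restriction, the neutral modes $\cos y,\sin x,\cos x$ would also sit in the kernel of $Q$ and a purely variational argument could not close.
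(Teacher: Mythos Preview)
Your proof is correct and is precisely the standard Arnold-type argument the paper has in mind: the paper does not spell out a proof but simply says it ``is based simply on the conservation of $\Vert\omega\Vert_{L^2}$ and $\Vert(-\Delta)^{-\frac{\alpha}{2}}\omega\Vert_{L^2}$,'' which are exactly your two conserved quantities $\|\omega\|_{L^2}^2$ and $E(\omega)$. Your observation that the odd-in-$y$ restriction kills the neutral modes $\cos y,\sin x,\cos x$ and leaves only $\sin y$ in the kernel of $Q$ is the crux, and your handling of the remaining scalar $a(t)$ via conservation of $\|\omega\|_{L^2}^2$ and continuity is the standard way to close.
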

The proof is standard and is based simply on the conservation of $\Vert\omega\Vert_{L^2}$ and $\Vert(-\Delta)^{-\frac{\alpha}{2}}\omega\Vert_{L^2}$ (see for example, \cite[Lemma 2.1]{DenisovSuperLinear}). We can now state and prove the main theorem of this subsection.

\begin{theorem}\label{sqgthm}
Fix $\alpha\geq \frac{1}{2}$. Assume that $\omega_0$ is odd in $y$ on $\mathbb{T}^2$, sufficiently close in $L^2$ to $\sin(y)$, and has two distinct level sets in $[-\pi,\pi]\times [0,\pi]$ with different values and enclosing areas sufficiently close to $2\pi^2$. Then, the unique solution $\omega$ to \eqref{SQG1} either becomes singular in finite time or there exists a constant $c>0$ so that
\[\|\nabla\omega(t)\|_{L^1}\geq c t \qquad \text{as} \qquad t\to\infty.\] 
\end{theorem}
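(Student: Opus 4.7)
The plan is to reduce the problem, via the odd-in-$y$ symmetry, to Theorem~\ref{twistingthm2} on the periodic channel, and then convert the resulting linear Lagrangian twisting of the two prescribed level curves into linear $L^1$ gradient growth through the coarea formula. The dichotomy in the statement is automatic: if a finite-time singularity occurs there is nothing to prove, so I assume the solution remains smooth on $[0,\infty)$.

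First, I would work on the invariant subdomain $M = \mathbb{T}\times[0,\pi]$; oddness in $y$ forces $\omega|_{y=0,\pi}\equiv 0$, so these lines are preserved by the flow. Since $\sin y$ is an eigenfunction of $-\Delta$ on $\mathbb{T}^2$, for every $\alpha\geq 1/2$ the stream function of $\omega_* = \sin y$ is $\psi_* = \sin y$ and the velocity is $u_* = (-\cos y,0)$, which is a non-constant shear satisfying Assumption~2 on $M$. By the Lyapunov stability lemma stated just before the theorem, as long as the smooth solution persists,
\[ \|\omega(t) - \sin y\|_{L^2(M)} \leq C\varepsilon_0, \qquad \varepsilon_0 := \|\omega_0 - \sin y\|_{L^2}. \]
For $\alpha\geq 1/2$ the map $\omega\mapsto (-\Delta)^{-\alpha}\omega$ gains at least one derivative in two dimensions, so also
\[ \|\psi(t) - \psi_*\|_{L^1(M)} + \|\nabla^\perp\psi(t) - u_*\|_{L^2(M)} \leq C\varepsilon_0 \]
uniformly in $t$, which yields $\|\psi-\psi_*\|_2 \leq C\varepsilon_0$ in the sense required by Theorem~\ref{twistingthm2}.

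Next, Theorem~\ref{twistingthm2} gives
\[ \|\Phi_1(t) - x_1 - v_*(\Phi_2(t))\,t\|_{L^2(M)} \leq C\sqrt{\varepsilon_0}\,t, \qquad v_*(y) = -\cos y. \]
Pick interior heights $y_1=\pi/4$ and $y_2=3\pi/4$ so that $v_*(y_1)\neq v_*(y_2)$. Exactly as in the proof of Theorem~\ref{wanderingthm}, a Chebyshev argument applied to this $L^2$ estimate produces positive-measure sets $\mathsf{A}(t),\mathsf{B}(t)\subset M$ lying in thin horizontal strips near $y_1, y_2$, whose images under $\Phi_t$, lifted to the universal cover $\widetilde{M}=\mathbb{R}\times[0,\pi]$, are horizontally separated by at least $c_0 t$. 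Because the two given level curves $\gamma_1,\gamma_2$ of $\omega_0$ enclose regions of area close to $2\pi^2$, each $\gamma_i$ traverses the full vertical extent of $M$ in a topologically essential way; taking $\varepsilon_0$ small ensures that $\Phi_t(\gamma_i)$ still meets both $\mathsf{A}(t)$ and $\mathsf{B}(t)$, so the lift of $\Phi_t(\gamma_i)$ to $\widetilde{M}$ has horizontal extent $\geq c_0 t$. A short connectedness/continuity argument then upgrades this to level-set length $\geq c_0 t$ for a whole interval of intermediate level values $c\in[c_1,c_2]$, and the coarea formula
\[ \|\nabla \omega(t)\|_{L^1(M)} = \int_{\mathbb{R}} \mathcal{H}^1(\{\omega(t) = c\})\,dc \]
delivers the required linear lower bound.

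The main obstacle I anticipate lies in the geometric extraction of the previous paragraph: the $L^2$ twisting bound is an averaged statement, so to assert that the two \emph{specific} initial curves $\gamma_1,\gamma_2$ (and not just a positive-measure family of nearby curves) are stretched, one must combine the Chebyshev-type selection of the fast/slow strips with the topological enclosed-area hypothesis on the $\gamma_i$, which is precisely what keeps them threading both strips under an $L^\infty$-small perturbation. A lesser technical point is the SQG endpoint $\alpha=1/2$: there the Biot--Savart-type operator gains only one derivative, so the pointwise-in-time bound $\|u-u_*\|_{L^2}\lesssim \|\omega-\omega_*\|_{L^2}$ should be justified via two-dimensional Sobolev embedding ($H^1\hookrightarrow L^p$ for all $p<\infty$) rather than a trivial inclusion, but this is comfortably sufficient.
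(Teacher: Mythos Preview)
Your overall strategy coincides with the paper's: reduce via the odd-in-$y$ symmetry to the channel $\mathbb{T}\times[0,\pi]$, use Lyapunov stability of $\sin y$ to place yourself in the hypotheses of Theorem~\ref{twistingthm2}, and then exploit the enclosed-area assumption on the two level curves to show that their images are stretched linearly on the cover. Two small corrections. First, the curves $\gamma_i$ do not ``traverse the full vertical extent of $M$''---they are contractible closed curves. What you actually need (and what the paper uses) is that the \emph{regions} $\Gamma_i$ they enclose have area close to $|M|=2\pi^2$, so that $\Gamma_i$, not $\gamma_i$, must intersect both $\mathsf A(t)$ and $\mathsf B(t)$; the boundary curve $\Phi_t(\gamma_i)$ then inherits the large diameter on the cover. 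Second, your worry at $\alpha=1/2$ is unnecessary: since $\widehat{u-u_*}(\xi)=\xi^\perp|\xi|^{-2\alpha}\widehat{\omega-\omega_*}(\xi)$, one has $\|u-u_*\|_{L^2}\le\|\omega-\omega_*\|_{L^2}$ directly for every $\alpha\ge 1/2$, with no Sobolev embedding needed (the paper notes this is the sole reason for the restriction on $\alpha$).

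The one genuine methodological difference is the final conversion from ``level curves have diameter $\gtrsim t$ on the cover'' to ``$\|\nabla\omega(t)\|_{L^1}\gtrsim t$''. The paper does \emph{not} use coarea. It keeps only the two curves $\gamma_1,\gamma_2$, writes $\|\nabla\omega(t)\|_{L^1}\ge\int_{-\pi}^\pi\big(\int_0^\pi|\partial_y\omega|\,\rmd y\big)\rmd x$, and for each vertical segment $\{x\}\times(0,\pi)$ bounds the inner integral from below by the number of \emph{alternating} crossings with $\Phi_t(\gamma_1)$ and $\Phi_t(\gamma_2)$; a separate topological statement (Lemma~\ref{lem:top}) then shows this count is $\gtrsim t$. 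Your alternative---observing that for a.e.\ intermediate value $c$ the level set $\{\omega_0=c\}$ has a component separating $\gamma_1$ from $\gamma_2$ and hence enclosing area close to $2\pi^2$, so the same stretching argument gives $\mathcal H^1(\{\omega(t)=c\})\gtrsim t$, and then integrating in $c$ via the coarea formula---is a valid route that sidesteps the crossing-count lemma, at the cost of requiring the (easy) extension of the stretching argument from two curves to an interval of level values.
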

\begin{remark}
    Let us remark that the $L^1$ growth of the gradient appears to be new even for the Euler equation on $\mathbb{T}^2$ (the case $\alpha=1$).
\end{remark}

\begin{proof}
First note that if $\Vert\omega-\sin(y)\Vert_{L^2}\leq \ve $, then $\Vert u-(\cos(y),0) \Vert_{L^2}\leq  \ve .$ This estimate follows directly from the definition of $u$, given $\omega$ given in \eqref{SQG2} and is the sole reason for the restriction $\alpha\geq \frac{1}{2}$.  The proof proceeds as in that of Theorem \ref{wanderingthm}, but relying on Theorem \ref{twistingthm2} to produce the requisite fixed area regions $\mathsf{A}(t)$ and $\mathsf{B}(t)$. In particular, we make a small $L^2$ perturbation of $\sin(y)$ keeping the odd-in-$y$ symmetry on $\mathbb{T}^2$ and ensuring that the resulting $\omega_0$ has two different level sets in $[-\pi,\pi]\times [0,\pi]$ enclosing areas close to $2\pi^2.$

Let us denote these level sets by $\gmm_1$ and $\gmm_2$, with $\gmm_2$ enclosing a larger region. Assume for simplicity that $\omega_0 = 1$ on $\gmm_1$ and $=2$ on $\gmm_2$. Using Theorem \ref{twistingthm2}, it is easy to then deduce that there are positive mass regions of particles coming from inside the area enclosed by the level sets that become very far when the dynamics is lifted to $\mathbb{R}\times [0,\pi].$ It then follows that the diameter of the image of $\gmm_i$ in the cover by the flow map is at least $ct$ for some universal constant $c>0.$

We now note that \begin{equation}
    \begin{split}
        \Vert \nabla\omega(t)\Vert_{L^1} \ge \int_{-\pi}^{\pi} \left[ \int_0^{\pi} |\partial_y \omega(x,y,t)| \, \rmd y \right]  \rmd x 
    \end{split}
\end{equation} and the integral $$\int_0^{\pi} |\partial_y \omega(x,y,t)| \, \rmd y $$ is simply bounded from below by the number of alternating intersections that the line $\{x\} \times (0,\pi)$ makes with $\gamma_1$ and $\gamma_2$.   The result follows from the topological lemma stated below. 
\end{proof}

\begin{lemma}\label{lem:top}
    Let $\gmm:[0,1]\to \mathbb{T}\times (0,\pi)$ and $\eta: [0,1]\to \mathbb{T}\times (0,\pi)$ be non self-intersecting smooth curves, except for $\eta(0)=\eta(1)$. Assume that the lift of the curves $\tilde{\gmm}$ and $\tilde{\eta}$ into the cover $\mathbb{R}\times[0,\pi]$ has the following properties: 
    \begin{itemize}
        \item $\mathrm{diam}_{x}(\tilde{\gmm}) \ge 2\pi M$ for some integer $M\ge1$; namely, there are two points on $\tilde{\gmm}$ whose distance in the $x$-direction is at least $2\pi M$.
        \item The region of $\mathbb{R}\times[0,\pi]$ enclosed by $\tilde{\eta}$ contains $\tilde{\gmm}.$
    \end{itemize}
    Then, we have that the image of  $\gmm$ viewed on the square $[-\pi,\pi]\times (0,\pi)$ cuts it into connected components, at least $M+1$ of which is traversed by the image of $\eta$.  
\end{lemma}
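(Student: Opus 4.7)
My plan is to pass to the universal cover $p:\mathbb{R}\times[0,\pi]\to \mathbb{T}\times(0,\pi)$ and exploit covering-space theory. The hypothesis that $\tilde\eta$ is a closed simple curve in the cover means that the loop $\eta$ is contractible on the cylinder; hence the disk $\Omega$ bounded by $\eta$ lifts to a pairwise-disjoint union $p^{-1}(\Omega) = \bigsqcup_{j\in\mathbb{Z}}(D + 2\pi j\,e_1)$, where $D$ is the bounded region enclosed by $\tilde\eta$. This disjointness of the ``lifted pockets'' is the key geometric input and follows from the simple-connectedness of $\Omega$ applied to the covering $p$.

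The main counting step is to note that since $\tilde\gamma \subset D$ has $x$-diameter at least $2\pi M$, the $x$-projection of $D$ is an interval of length at least $2\pi M$, and a short calculation shows that the integer shifts $j$ for which $(D + 2\pi j\,e_1)\cap Q \neq \emptyset$ (with $Q = (-\pi,\pi]\times(0,\pi)$ the fundamental domain) form an integer interval of length at least $M+1$, yielding at least $M+1$ pairwise-disjoint open ``pockets'' $U_j := (D + 2\pi j\,e_1)\cap Q$ in $Q$. Each pocket contains a non-empty piece $(\tilde\gamma + 2\pi j\,e_1)\cap Q$ of the image of $\gmm$, and because the pockets are pairwise disjoint in $Q$, each of these pieces of $\gmm$ bounds a distinct sub-region inside $U_j$, contributing a distinct connected component of $Q\setminus \gmm$. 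The image of $\eta$ in $Q$ is the union $\bigcup_j (\tilde\eta + 2\pi j\,e_1)\cap Q$, and each summand forms part of the boundary $\partial U_j$ of the corresponding pocket, so $\eta$'s image meets or encloses each of the $M+1$ distinct components of $Q\setminus \gmm$ identified.

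The main obstacle will be making the notion of ``traversal'' precise: since $\eta$ and $\gmm$ are disjoint in the cover, $\eta$'s image inside each pocket $U_j$ sits in a thin annular region between $\gmm$ and $\tilde\eta$, so one must verify that these $M+1$ annular pieces correspond to genuinely distinct components of $Q\setminus \gmm$ each containing (or adjacent to) a positive-length arc of $\eta$'s image, rather than being merged by unexpected interactions with $\partial Q$. A careful planar-graph analysis on $Q$ (treating the arcs of $\gmm$, the arcs of $\eta$, and $\partial Q$ as edges), combined with the disjointness of the pockets and a Jordan-curve argument applied within each pocket, should complete the proof and produce the lower bound on the number of alternating intersections invoked in the proof of Theorem \ref{sqgthm}.
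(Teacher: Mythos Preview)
The paper offers essentially no proof beyond the remark that it ``simply uses the intermediate value theorem on the cover'' together with Figure~\ref{fig:torus}, so there is little to compare against in detail. Your setup via the pairwise-disjoint translates $D+2\pi j\,e_1$ of the Jordan domain bounded by $\tilde\eta$ is exactly the right geometric observation and is the content behind the paper's IVT remark.

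The gap you flag is genuine, however, and your proposed planar-graph fix does not close it. The $\eta$-arcs forming $\partial U_j$ are not obstructions in $Q\setminus\gmm$, so the ``annular'' piece of $U_j$ lying between $\gmm$ and $\partial U_j$ is connected straight through the $\eta$-arc to the exterior of \emph{all} the pockets; consequently all $M+1$ of your annular pieces can lie in a single component of $Q\setminus\gmm$, and a Jordan-curve argument confined to one pocket cannot separate them. (Conversely, any components that \emph{are} separated by $\gmm$-arcs inside the pockets are not met by $\eta$ at all, since $\tilde\eta$ sits outside $\tilde\gmm$.) What your disjoint-pocket structure \emph{does} yield directly, and what Theorem~\ref{sqgthm} actually needs, is the alternation count on vertical lines: each segment $\{x_0\}\times(0,\pi)$ meets at least $M$ of the disjoint intervals $(D+2\pi j)\cap\{x_0\}$, each bounded by $\eta$-points and containing a $\gmm$-point, producing at least $M$ blocks of the form $\eta$--$\gmm$--$\eta$. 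If you insist on the literal component statement, work instead with the $\gmm$-arcs in $Q$ that run from the left edge to the right edge (those coming from shifts $j$ for which both endpoints of $\tilde\gmm+2\pi j$ lie outside $Q$); these arcs genuinely separate $Q$, and the resulting strips are the components through which the $\eta$-arcs must pass.
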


The proof is not difficult and simply uses the intermediate value theorem on the cover. See Figure \ref{fig:torus} for an illustration in the case $M=3$. Blue and red curves correspond to images of $\eta$ and $\gmm$, respectively. 
	
	\begin{figure} [h!]
		\centering
		\includegraphics[height=0.2\linewidth]{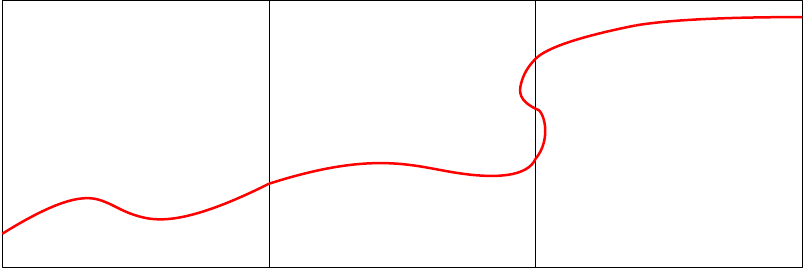}  \quad \includegraphics[height=0.2\linewidth]{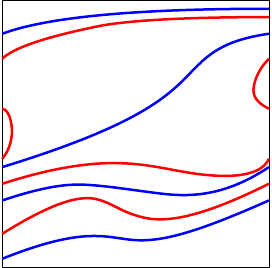}  
		\caption{Illustration for Lemma \ref{lem:top}. Each ``essential'' connected component of the domain defined by $\gmm$ (red curve) is traversed by $\eta$ (blue curve).} \label{fig:torus}
	\end{figure}

\subsection{Spiraling vortex patches with unbounded perimeter growth}\label{perimgrow}

In this section, we present examples of vortex patches in $\mathbb{R}^{2}$ whose perimeter grows at least linearly for all times. A vortex patch is a weak solution to the 2d Euler equation \eqref{eevb} where the vorticity $\omega$ is the characteristic function of a bounded set. A vortex patch is called smooth if the boundary of the set is smooth. A classical result established in \cite{Chemin} and later in \cite{BC} and \cite{Serfati} is that initially smooth vortex patches remain smooth for all time. The upper bound given in \cite{BC} for the patch perimeter is double exponential in time, and it has not been improved so far.  The long-time behavior of vortex patches remains essentially wide open, though there has been some progress in recent years. 

A phenomenon that we are interested in is the filamentation of vortex patches and their axi-symmetrization. While many numerical simulations exist indicating that vortex patches tend to filament and develop unbounded perimeter (see, for example, \cite{MMZ} and \cite{D} and numerous later works), we are not aware of any rigorous proofs of these phenomena. Indeed, a major conceptual barrier to establishing such behavior rigorously is the existence of a large class of steady and purely rotating states \cite{HMV,CCG} in addition to more recently constructed quasi-periodic states \cite{BHM,HHM,GIP}. We must emphasize at this point that we are considering  \textit{single signed} vortex patches. Simple examples of filamentation where positive and negative regions of the vorticity separate, mimicking oppositely signed point vortices, and leave behind long tails are not possible when the vorticity is signed on $\mathbb{R}^2$ since vorticity of a single sign will tend to wind around rather than spread out (see \cite{Marchioro}). 

The two issues we have just described, the plethora of coherent structures as well as the confinement of vorticity, make it difficult to conceptualize a proof of filamentation in vortex patches. How does one ensure that a given initial patch does not slowly migrate toward one such state while perimeter growth and filamentation occurs for long-time but not all time?  We resolve this problem with two ideas. First, guided by the stability of global twisting given to us from Theorem \ref{twistingthm2}, we search for a patch that starts close to the so-called Rankine vortex, which is simply the characteristic function of the unit disk, $\chi_{B_1}$, whose associated velocity field is twisting. The Lyapunov stability of the Rankine vortex will ensure that twisting occurs for some particles in $\mathbb{R}^2.$ Since we seek to show that the boundary of the patch twists (and not just the image of the fundamental domain) and since the boundary of the patch has Lebesgue measure zero, it does not appear at first glance that we can apply Theorem \ref{twistingthm2} to say anything about the boundary of a patch, no matter how close it is to $\chi_{B_1}.$ In order to ensure that the boundary of the patch itself must become wound up, we introduce a large hole into the patch that allows the whole patch to be close to $\chi_{B_1}$ in $L^1$ but that ensures that there must be a mass of particles holding back the boundary of the patch. In this way, by continuity, and aided by Theorem \ref{twistingthm2} we can ensure that there are points of the boundary of the patch with fixed distance from the origin and with completely different winding. From here, there is a somewhat involved but elementary argument ensuring that the perimeter of the patch must become large. It would be interesting to see if filamentation and perimeter growth can be established for simply connected patches (i.e. without using a large hole); this would probably require several new ideas. 

\subsubsection{Specification of the patch}

    We now make specifications on the type of initial patch we will consider. 
	\medskip
	
	\noindent \textbf{Assumptions on the initial patch}. We take $\Omega_{0}$ to be an open set which is 4-fold symmetric (invariant under the $90\deg$ rotation around the origin) with four connected components. 
	Take some $N\gg1$ and $0<\dlt \ll 1$. The set $\Omega_{0}$ will consist of three parts: \begin{itemize}
		\item \textit{Bulk}: the unit disc $B_{1}$ minus the $\dlt$-neighborhood of the diagonals $\{ x_1 = \pm x_2 \}$   
		\item \textit{Bridges}: the $\dlt$-neighborhood of the $x_1$/$x_2$ axes intersected with the annulus $\{ 1 < |x| < N \}$.
		\item \textit{Rings}: Four annular regions with area $\delta$ and holes of area $N^2.$ 
	\end{itemize}
	See Figure \ref{fig:patch-perimeter} for a schematic (but not to scale) description of $\Omega_{0}$. The key is that the measure of $\Omega_0\Delta B_1$ is small and $\Omega_0$ consists of four connected pieces with large holes. Furthermore, $\Omega_0$ contains no circle centered at $0.$ Observe that we may perturb the patch $\Omega_{0}$ described above and in the figure in $L^1$ as long as it is still 4-fold symmetric and homeomorphic to $\Omega_0$. In particular, the patch boundary can be smooth or even analytic. Then we have the following result. 
	
	\begin{theorem}[Infinite Perimeter Growth for Patches]\label{thm:patch}
		Let $\Omega(t)$ be the patch solution associated with $\Omega_{0}$. There exists a universal constant $c_0>0$ so that $|\partial\Omega(t)|\geq c_0 t$ for all $t\geq 0.$ 
 In fact, any lifting of $\Omega(t)$ satisfies that its diameter on $\mathbb{R}\times\mathbb{R}^+$ is bounded from below by $c_0 t.$
	\end{theorem}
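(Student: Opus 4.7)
The plan is to combine a Wan--Pulvirenti-type $L^1$-stability of the Rankine vortex $\chi_{B_1}$ in the four-fold symmetric class with the punctured-plane twisting bound \eqref{twistingthmunbounded}, and then transfer angular tracking from positive-measure sets (the bulk and the holes) onto the measure-zero patch boundary via the topological fact that the outer boundary of each component has vanishing winding number around the origin.

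I would first use the four-fold symmetry to pin the center of rotation at the origin, making $0$ a fixed point of the flow, and invoke conservation of the angular impulse $\int\omega|x|^2\,dx$ together with the elementary quadratic lower bound $I(\Omega)-I(B_R)\ge |\Omega\Delta B_R|^2/(4\pi)$ (valid when $|\Omega|=|B_R|$) to conclude $\sup_t|\Omega(t)\Delta B_1|\le C|\Omega_0\Delta B_1|^{\alpha}$ provided the parameters $\delta,N$ of the specified patch are suitably balanced (schematically $\delta\ll N^{-3}$). Via Biot--Savart this yields $\sup_t(\|u(t)-u_*\|_{L^2(\mathbb{R}^2)}+\|r^{-1}u_r\|_{L^2})\le \sqrt{\varepsilon}$, with $u_*=\mu(r)e_\theta$ the Rankine velocity ($\mu=1/2$ for $r<1$, $\mu=1/(2r^2)$ for $r>1$) and $\varepsilon\to 0$ with the parameters. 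The punctured-plane twisting estimate \eqref{twistingthmunbounded} from Section \ref{punctureddomains} then gives
\[\|\widetilde\Phi_\theta(\cdot,t)-\theta-t\mu(\widetilde\Phi_r(\cdot,t))\|_{L^2(\mathbb{R}^2)}\le C\sqrt{\varepsilon}\,t,\]
where $\widetilde\Phi_t$ is the lift of the flow to $\mathbb{R}\times\mathbb{R}^+$. Combining with Chebyshev and the area-preservation fact that almost all initial bulk particles stay inside $B_1$, I would extract (i) a subset of $\Omega_0\cap\{r<1\}$ of measure $\ge c_B>0$ on which $|\widetilde\Phi_\theta-\theta-t/2|\le t/100$, and (ii) a subset of each hole $H_0^{(j)}$ (area $N^2$, radius $\approx N$) of measure $\ge c_H>0$ on which $|\widetilde\Phi_\theta-\theta-t/(2N^2)|\le t/100$. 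For $N$ large and $\varepsilon$ small, these two angular targets are separated by a gap of order $t$. Fixing one connected component $C_0$ with hole $H_0$, a lift $\widetilde C_0$, and the corresponding lifted flow, I set $\widetilde C(t):=\widetilde\Phi_t(\widetilde C_0)$ and $\widetilde H(t):=\widetilde\Phi_t(\widetilde H_0)$; since Yudovich flows preserve patch topology and the origin is fixed, $\widetilde C(t)$ is connected and encloses $\widetilde H(t)$. By (i)--(ii), $\widetilde C(t)$ contains points at lifted angle $\approx t/2$ while $\widetilde H(t)$ contains points at lifted angle $\approx t/(2N^2)$, so the $\theta$-projection of $\widetilde C(t)$ must span the full interval between them, giving $\operatorname{diam}(\widetilde C(t))\ge c_0 t$. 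This is the lifted-diameter assertion of the theorem.

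For the perimeter bound, I would observe that $C_0$ lies strictly in one quadrant away from the origin, so the outer boundary $\Gamma_0$ has winding number $0$ around the origin. Winding is a homotopy invariant and the origin is fixed by the flow, hence $\Gamma_t:=\Phi_t(\Gamma_0)$ also has winding $0$ and admits a single-valued lift to a closed curve $\widetilde\Gamma_t$ in the cover, namely the outer boundary of $\widetilde C(t)$. A closed curve in $\mathbb{R}\times\mathbb{R}^+$ whose $\theta$-projection covers an interval of length $L$ and which stays at radii $\ge r_{\min}>0$ has arc length (in the pullback polar metric, equal to the Euclidean length of its image in the plane) at least $2r_{\min}L$, since the total variation of $\theta$ along any closed curve is at least twice its $\theta$-extent. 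Since $\Omega_0$ avoids a neighborhood of the origin and radii are near-preserved in the bulk and ring regions, $r_{\min}$ stays bounded below, and therefore $|\Gamma_t|=|\widetilde\Gamma_t|\ge 2r_{\min}c_0 t$; summing the four symmetric components yields $|\partial\Omega(t)|\ge c_0' t$.

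The main obstacle, and the reason for inserting holes of area $N^2$ into the patch, is the transfer from the $L^2$-twisting estimate (which controls only Lebesgue-a.e.\ particles) onto the measure-zero boundary. The hole plays the role of a positive-measure marker at radius $\approx N$: its angular position is tracked accurately by the twisting bound, and the topological requirement that each component enclose its hole forces the outer boundary to inherit the slow angular drift of the hole at its outer-radius end, while the bulk simultaneously forces fast angular drift at the inner end. Without such a hole, a small $L^2$-perturbation could in principle leave the boundary erratic while interior particles satisfy the twisting bound; the hole is what converts a bulk-particle statement into a boundary statement.
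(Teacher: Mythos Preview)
Your overall strategy matches the paper's: Rankine stability $\Rightarrow$ velocity close to $u_*$ in $L^2$ $\Rightarrow$ twisting estimate \eqref{twistingthmunbounded} $\Rightarrow$ bulk particles wind fast while hole particles wind slowly $\Rightarrow$ transfer to the boundary via the topology of the holed component. The lifted-diameter claim is fine, and your explanation of why the hole is needed is exactly the point.

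The genuine gap is in your perimeter step. Your inequality $|\widetilde\Gamma_t|\ge 2r_{\min}L$ is correct, but the assertion ``$r_{\min}$ stays bounded below'' is not justified and is likely false. The only pointwise radial control available comes from the $m$-fold symmetry bound $|u(x)|\le C|x|$ near the origin, which via Gr\"onwall gives merely $|\Phi_t(x)|\ge |x|\,e^{-Ct}$. Since the outer boundary of $C_0$ initially passes at distance $\sim\delta$ from the origin (along the diagonal cuts of the bulk), you only obtain $r_{\min}(t)\ge \delta e^{-Ct}$, and then $2r_{\min}(t)\cdot c_0 t\to 0$. Nothing in the $L^2$ twisting bound, the $L^1$ patch stability, or the Biot--Savart estimates prevents the boundary curve from dipping arbitrarily close to the origin as $t\to\infty$, and once it does the polar arc-length element $r\,d\theta$ becomes small exactly where the $\theta$-variation is accumulated.

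This is precisely why the paper does \emph{not} argue via a global $r_{\min}$. Instead it (i) locates two \emph{specific} boundary points $\bar{x}_1(t),\bar{x}_2(t)$ whose images have $\Phi_\theta$ differing by $\sim t$ \emph{and} $\Phi_r$ bounded below by fixed constants $r_1$ and $N/2$ (Proposition~\ref{prop:patch-key}), and then (ii) proves a geometric lemma (Lemma~\ref{lem:geom}) stating that a simple curve in the cover connecting $(0,r_0)$ to $(2\pi M,r_M)$ which does not meet its own $2\pi$-translates has planar length at least $2M\min\{r_0,r_M\}$, \emph{regardless} of how close to $r=0$ it dips in between. The non-self-intersection of $\Gamma_t$ in the plane is what replaces your $r_{\min}$ hypothesis; without it the claimed lower bound is simply false (a curve can wind $M$ times near the origin with length $O(1)$). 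Your argument can be repaired, but only by supplying both ingredients: boundary points at controlled radii at the two angular extremes, and a length lemma of the type of Lemma~\ref{lem:geom}.
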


 \begin{remark} As we discussed above, the main tool in establishing the theorem is the version of the global stability of twisting Theorem \ref{twistingthm2} discussed in Section \ref{punctureddomains}, though it is possible that the full might of Theorem \ref{twistingthm2} is not needed to establish the statement of the above theorem for this type of patch (Theorem \ref{twistingthm2} gives a bit more information than what is stated above). We remark that there were previous rigorous results in this direction giving long, but finite, time perimeter growth \cite{CJ} and infinite-time spiraling \cite{EJSVPII}. Though the result of \cite{EJSVPII} is an infinite time one, it only applies to a non-smooth patch and the local nature of the argument does not necessarily give any perimeter growth. It is also important to point out that the argument given here actually implies that the different components of $\Omega(t)$ ``entangle'' as $t\rightarrow\infty$ in the sense that any straight line passing through the origin will intersect the boundaries of the components of the patch $O(t)$ times as $t\rightarrow\infty$; moreover, the number of consecutive crossings with the boundaries of the different components is also of $O(t)$ as $t\rightarrow\infty.$ This is simply a consequence of the growth of the diameter of the lift of the patch. This entangling was observed numerically \cite{MMZ} in addition to the filamentation and perimeter growth.
 \end{remark}
	
	\begin{figure}[h!]
		\centering
		\includegraphics[width=0.36\linewidth]{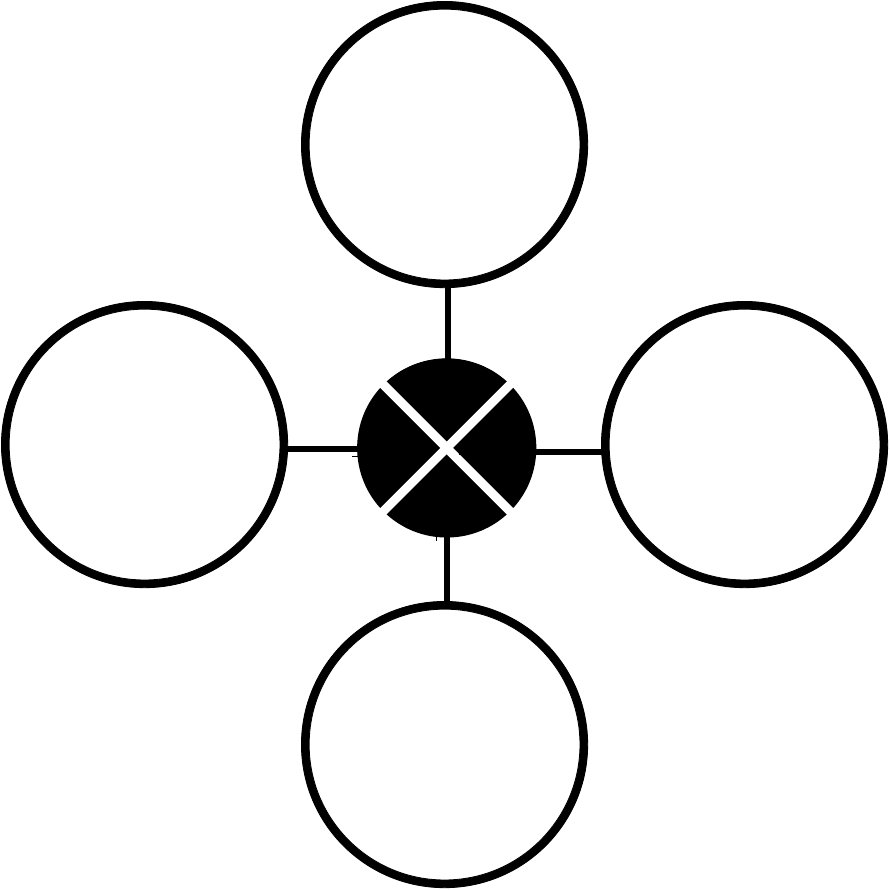}  \qquad \includegraphics[width=0.36\linewidth]{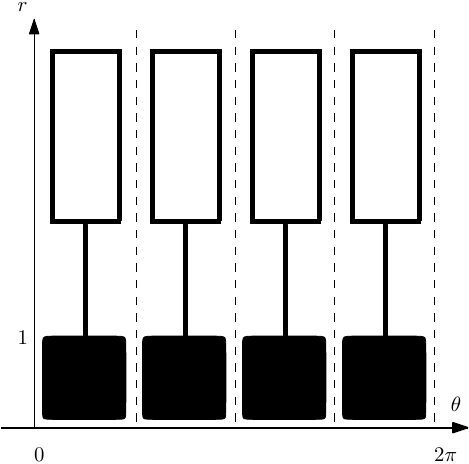}  
		\caption{Left: Black region  corresponds to the interior of $\Omega_{0}$.  Right: A (rough) image of $\Omega_{0}$ in polar coordinates.} \label{fig:patch-perimeter}
	\end{figure}

	\begin{remark}
	    Recently in \cite{CJL}, linear growth of perimeter was obtained for patch solutions defined in the cylinder $\mathbb{T} \times [0,\infty)$ which are \textit{attached} to the boundary $\mathbb{T}\times \{ 0 \}.$ In this work, the assumption that the patch is attached to the boundary was essential; it was used to guarantee that there exist some ``slow'' particles on the patch, while it is not difficult to see that on the average all the particles are moving at a faster speed. Employing our main theorem, this assumption can be removed, and one can also obtain infinite perimeter growth for patches defined in $\mathbb{T} \times \mathbb{R}.$
	\end{remark}
	
	
    The remainder of this section is devoted to the proof of Theorem \ref{thm:patch}. In \S \ref{subsec:winding-to-perim}, we reduce the statement to proving existence of a pair of patch boundary points with a large gap in the respective winding numbers around the origin, by establishing a geometric lemma which could be of independent interest (Lemma \ref{lem:geom}). Then, the proof will be completed in \S \ref{subsec:finish} based on \S \ref{sect:ext} after recalling the stability statement of the Rankine vortex.  
 
	\subsubsection{From winding to perimeter growth}\label{subsec:winding-to-perim}
	
	We reduce the problem of perimeter growth to the one showing the existence of a pair of points, located on the patch boundary, with large and small winding numbers. Let us start with a basic definition:
	\begin{definition}
		In this section, as in Section \ref{punctureddomains}, $\Phi_\tht, \Phi_r$ are defined on the cover $(\tht,r) \in \mathbb{R}\times \mathbb{R}_+$. The corresponding velocities are $u_\tht$ and $u_r$. 
	\end{definition}

    Now we state the key proposition that we will establish using \ref{twistingthm2}:

	\begin{proposition}\label{prop:patch-key}
		For any $t\ge0$, there exist points $\bar{x}_{1}(t)$ and $\bar{x}_{2}(t)$ on $\partial\Omega_{0}$ which satisfy \begin{equation}\label{eq:x1}
			\begin{split}
				\Phi_{\theta}(\bar{x}_{1}(t),t) \ge c_{1}t , \quad \Phi_{r}(\bar{x}_{1}(t),t) \ge r_{1}
			\end{split}
		\end{equation} and \begin{equation}\label{eq:x2}
		\begin{split}
				\Phi_{\theta}(\bar{x}_{2}(t),t)\le c_{0}t, \quad \Phi_{r}(\bar{x}_{2}(t),t) \ge \frac{N}{2} 
		\end{split}
	\end{equation} respectively, where $c_{0},c_{1},r_{1}>0$ are some universal numbers satisfying $c_{1}>c_{0}.$
	\end{proposition}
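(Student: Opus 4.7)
I would combine three ingredients: (i) the Lyapunov $L^1$-stability of the Rankine vortex $\chi_{B_1}$, which keeps $\omega(t)$ close to $\chi_{B_1}$ in $L^1$ (and so $u(t)$ close to $u_*$ in $L^2$) for all time; (ii) the global stability of twisting estimate from \S\ref{punctureddomains}, applied in the punctured plane, which yields
\begin{equation*}
\|\Phi_\theta(\cdot,t) - \theta - t\mu(\Phi_r(\cdot,t))\|_{L^2(\mathbb{R}^2)} \leq C\sqrt{\ve}\, t \qquad \forall t\in\mathbb{R},
\end{equation*}
with $\mu(r)=1/2$ for $r\leq 1$ and $\mu(r)=1/(2r^2)$ for $r\geq 1$; and (iii) topological and area constraints on the bounded connected components of $\mathbb{R}^2\setminus\Omega(t)$. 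The smallness of $\ve$ would be arranged by picking $\delta\ll 1/N$ so that $\|\omega_0-\chi_{B_1}\|_{L^1}=O(\delta N)$ is small.

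To produce $\bar{x}_2(t)$, I would let $H(0)\subset \mathbb{R}^2\setminus\Omega_0$ denote the bounded complementary component inside one of the rings, with $|H(0)|=N^2$, and set $H(t):=\Phi_t(H(0))$; this remains a bounded complementary component of $\mathbb{R}^2\setminus\Omega(t)$ of the same area, since the Euler flow is an area-preserving homeomorphism carrying $\Omega_0$ to $\Omega(t)$. A simple area estimate shows $H(t)\cap\{r>N/2\}$ has area $\geq N^2-\pi(N/2)^2 = N^2(1-\pi/4) > 0.2\,N^2$, so $\partial H(t)\subset \partial\Omega(t)$ must contain a point at radius $\geq N/2$; pulling this point back by $\Phi_t^{-1}$ yields $\bar{x}_2(t)\in \partial H(0)\subset\partial\Omega_0$ with $\Phi_r(\bar{x}_2(t),t)\geq N/2$. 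For the angular bound, the twisting estimate on the fixed positive-measure set $H(0)$, combined with $\mu(\Phi_r)\leq 2/N^2$ on the good subset $\{x\in H(0):\Phi_r(x,t)\geq N/2\}$ (of measure $\gtrsim N^2$), forces the average of $\Phi_\theta(\cdot,t)$ on this subset to be $O(t/N)$. A continuity/topological argument on the Jordan curve $\partial H(t)$ then produces a boundary point $\bar{x}_2(t)$ that simultaneously satisfies both the radial and angular bounds, with $c_0=O(1/N)$.

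To produce $\bar{x}_1(t)$, I would focus on a boundary arc $\gamma\subset \partial\Omega_0\cap\partial B_1$ of the bulk on the unit circle, where the Rankine rate is $\mu\equiv 1/2$. On the thin strip $T_\eta=(B_1\setminus B_{1-\eta})\cap\text{bulk}$ just inside $\gamma$, the twisting estimate gives $\int_{T_\eta}|\Phi_\theta(\cdot,t) - \theta_0 - t/2|^2\,dx\leq C\ve t^2$ (modulo a small error from points that briefly exit $B_1$, controlled by Lyapunov stability). A Fubini argument along radial rays crossing $\gamma$, combined with continuity of the Lagrangian flow (from Chemin's $C^{1,\alpha}$ patch regularity), would produce $\bar{x}_1(t)\in\gamma$ with $\Phi_\theta(\bar{x}_1(t),t)\geq c_1 t$ for a universal $c_1<1/2$. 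The radial bound $\Phi_r(\bar{x}_1(t),t)\geq r_1$ follows from the fact that the Rankine flow exactly preserves radii and that the $L^2$-small perturbation keeps particles near $r=1$ from fleeing to the origin. Choosing $N$ large ensures $c_1>c_0$.

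The core technical difficulty is that $\partial\Omega_0$ has Lebesgue measure zero while the twisting bound is an $L^2$ average over $\mathbb{R}^2$. For $\bar{x}_2(t)$ this obstacle is overcome by the topological/area argument on the hole $H(t)$, which transfers information from the positive-measure set $H(0)$ to a point on its boundary. For $\bar{x}_1(t)$ the main obstacle will be the delicate slicing argument in tubular neighborhoods of $\gamma$ that, together with continuity of the patch flow, passes $L^2$ control to a pointwise assertion on the one-dimensional arc.
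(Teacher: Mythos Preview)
Your ingredients (Sideris--Vega stability, the twisting estimate from \S\ref{punctureddomains}, and a topological/area argument) match the paper's, and your treatment of $\bar{x}_2(t)$ via the hole $H(0)$ is essentially the same as the paper's.

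The gap is in your construction of $\bar{x}_1(t)$. Your ``thin strip plus Fubini plus continuity'' argument does pass the angular bound $\Phi_\theta\ge c_1 t$ from the strip $T_\eta$ to the boundary arc $\gamma\subset\partial B_1$, but it does not deliver the radial lower bound $\Phi_r(\bar{x}_1,t)\ge r_1$. Your justification (``the $L^2$-small perturbation keeps particles near $r=1$ from fleeing to the origin'') is false pointwise: $L^2$ control on $u-u_*$ gives no control on individual trajectories or on one-dimensional arcs, and since $|u(x)|\lesssim|x|$ near the origin (by the $m$-fold symmetry estimate), a boundary point can in principle drift to radius $e^{-Ct}$. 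Moreover, the strip $T_\eta$ has small measure, so you cannot rule out that all of $\Phi_t(T_\eta)$ lies in $B_{1/2}$; the stability bound $|\Omega(t)\triangle B_1|\le\ve$ does not prevent this.

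The paper avoids this by reusing, for $\bar{x}_1$, exactly the Jordan-curve mechanism you already invoke for $\bar{x}_2$. Rather than working in a thin strip, one first uses stability plus Chebyshev on the twisting estimate to find a \emph{positive-measure} set of interior bulk points $p$ with both $\Phi_\theta(p,t)\ge c_1 t$ and $\Phi_r(p,t)\in[1/2,1]$ (the latter because $|\Omega(t)\cap(B_1\setminus B_{1/2})|\ge 3\pi/4-O(\ve)$). From any such $\Phi_t(p)$ one shoots the vertical ray $\{\theta=\Phi_\theta(p,t),\ r\ge\Phi_r(p,t)\}$ in the cover; since the lifted patch component is bounded, the ray exits through $\partial\Omega(t)$ at a point with the same angular coordinate and $r\ge 1/2$. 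This gives both bounds simultaneously. You should replace the slicing argument for $\bar{x}_1$ by this ray-exit argument.
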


	\begin{proof}[Proof of Theorem \ref{thm:patch} from Proposition \ref{prop:patch-key}]
		We shall use the following geometric lemma: \begin{lemma}\label{lem:geom}
			Let $\gmm:[0,1] \to \mathbb{R}^{2}_+ $ be a simple smooth curve in $\mathbb{R}^{2}_+ := \left\{ (\varphi,r) : r > 0 \right\}$ equipped with the metric $ds^2 := r^2(dr^2+d\varphi^2)$ satisfying \begin{itemize}
				\item $\gmm(0) = (0,r_0)$, $\gmm(1) = (2\pi M, r_M)$ for some $r_{0},r_{M}>0$ and integer $M\ge1$;
				\item the image of $\gmm$ does not intersect with its $2\pi$ translates in $\varphi$; $\gmm([0,1]) \cap ( \gmm([0,1]) + (2\pi k, 0) ) = \emptyset$ for any integer $k$. 
			\end{itemize} Then we have $\mathrm{length}(\gmm([0,1])) \ge 2M \min\{ r_{0}, r_{M} \}  $. 
		\end{lemma}
	
    This result is nontrivial since the curve may spend most of its time near the origin; Figure \ref{fig:geodesic} gives an idea about the shortest curve satisfying the assumptions of the lemma. 
 Returning to $\mathbb{R}^2\backslash\{ 0\},$ this lemma says that a non self-intersecting curve in $\mathbb{R}^2\backslash\{ 0\}$ which winds around the origin $M$ times and start and end outside the unit disc has length at least $2M$. 
		
		\begin{figure}
			\centering
			\includegraphics[scale=1]{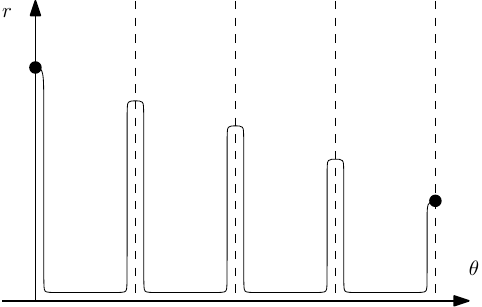}  
			\caption{Illustration for Lemma \ref{lem:geom}} \label{fig:geodesic}
		\end{figure}

		\begin{proof}[Proof of Lemma \ref{lem:geom}]
			Without loss of generality, we assume that $1 \le r_0 < r_M$. Furthermore, we may assume that $r_{0}$ and $r_{M}$ is the maximal (in $r$) intersection point of $\gmm([0,1])$ with $\{ \varphi = 0\}$ and  $\{ \varphi = 2\pi M \}$, respectively, since otherwise it means that we can restrict to a subset of $\gmm([0,1])$ which has all the desired properties. The goal is to prove the following 
			
			\medskip 
			
			\noindent \textbf{Claim}. For any integer $0< M' \le M$, the image $\gmm([0,1])$ intersects a point of the form $(2\pi M', r_{M'})$ with $r_{M'} \ge r_{0}$. (For simplicity, let us always denote $r_{M'}>0$ by the maximal (in $r$) intersection point of $\gmm([0,1])$ with $\{ \varphi = 2\pi M'\}$, which is well-defined by the intermediate value theorem.)
			
			\medskip
			
			\noindent Assume that this is the case. Then, the length of the image $\gmm([0,1])$ restricted to the region $\left\{ 2\pi (M'-1) \le \varphi \le 2\pi M' \right\}$ cannot be less than the distance (defined by the above metric on $\mathbb{R}^{2}_+$) between the points $(2\pi M', r_{M'})$ and $(2\pi (M'-1), r_{M'-1})$, which is simply equal to $2$. Since this holds for all $M' = 1, \cdots, M$, we obtain the claimed lower bound. 
            \\
			
			We now proceed to prove the \textbf{Claim} by induction in $M$. When $M=1$, there is nothing to prove. Let us take some $M>1$, assuming that the \textbf{Claim} holds for all integers less than $M$. If there exists some $0<k<M$ such that $r_{k} \ge r_{0}$, we may split the curve $\gmm$ into two parts, the one connecting $(0,r_{0})$ from $(2\pi k, r_{k})$ and the other $(2\pi k, r_{k})$ from $(2\pi M , r_{M})$. Each of these curves satisfy the assumptions of the lemma (after translating in $\varphi$ if necessary), and from the induction hypothesis, we have that $\mathrm{length}(\gmm([0,1])) \ge 2k + 2(M-k) = 2M$. 
			
			Therefore, towards a contradiction we may assume that $r_{1}, \cdots, r_{M-1} < r_{0}$ and $\gmm([0,1])<2M$. From the maximality assumption of these points, $\gmm([0,1])$ does not intersect $\{ \varphi = 2\pi k, r > r_0 \}$ for $k = 0, 1, \cdots, M-1$. We now try to extend the curve $\gmm$ by adding to its end the segment $\{ r_{0} < r < r_{M}, \varphi = 2\pi M \}$, while keeping the condition that the extended curve should not intersect with its $2\pi$-translates in $\varphi$. 
			\begin{itemize}
				\item Case 1:			If this is possible, then by projecting the curve onto $\mathbb{R}^{2}\backslash \{ (0,0)\}$ (defined by $x=r\cos\varphi, y = r\sin\varphi$) gives a simple, piecewise smooth, closed curve whose winding number around the origin is $M$. This is a contradiction, thanks to the well-known topological statement that \textbf{any simple piecewise smooth closed curve in $\mathbb{R}^{2}\backslash \{ (0,0)\}$ has winding number $0, -1,$ or $1$.} (While this is a deep statement in general, it can be proved by elementary means with the piecewise smooth assumption. For instance, one can observe that it is sufficient to prove the statement for polygonal curves and then proceed by an induction argument on the number of vertices.)
				\item Case 2: If this is \textit{not} possible, $\gmm([0,1])$ should pass through a point of the form $(2\pi k, r)$ for some integer $k \in (-\infty, -1] \cup [M, \infty)$ and $r_{0}<r<r_{M}$. We now consider the following sub-cases: \begin{itemize}
					\item Case 2-1: $k=M$. In this case we can take the curve $\tilde{\gmm}$ which connects $\gmm(0)$ to $(2\pi M, r)$, which is simple, smooth, contained in $\gmm([0,1])$ and satisfies all of the assumptions of the lemma. Then we repeat the same argument with this strictly shorter curve. The Case 2-1 cannot occur indefinitely due to the smoothness assumption of $\gmm$. Therefore, after a finite number of steps, one should arrive at either Case 1 (which then completes the induction argument) or Case 2-2 or 2-3, see below.
					\item Case 2-2: $k>M$. This time, we can take the curve $\tilde{\gmm}$ which connects $\gmm(0)$ to $(2\pi k, r)$, which is simple, smooth, contained in $\gmm([0,1])$ and satisfies almost all of the assumptions of the lemma. Here, the point is that it only suffices to show that $\mathrm{length}(\tilde{\gmm}) \ge 2(M-1)$, since the part of $\gmm([0,1])$ not contained in $\tilde{\gmm}$ has length at least 2. Then, we try to obtain a contradiction by connecting  $(2\pi k, r)$ to $(2\pi k, r_{0})$. If this fails again, then first consider when it intersects a point of the form $(2\pi k, r')$ for some $r_0<r'<r$. As before, this case cannot happen infinitely many times. But then after some finite number of iterations we obtain the existence of some point $(2\pi k',r')$ for some $k'  \in (-\infty, -1] \cup [M, \infty)\backslash \{ k\}$. But then we can now further take another curve contained in $\tilde{\gmm}$ for which we only need to show that its length is $\ge 2(M-2)$. This procedure cannot be repeated indefinitely, which eventually leads one to Case 1. 
					\item Case 2-3: $k<0$. This case can be handled similarly to the Case 2-2. We omit the details. 
				\end{itemize}
			\end{itemize}
            Given the \textbf{Claim}, the proof of Lemma \ref{lem:geom} is now complete. 
	\end{proof}
	
		To apply the Lemma, we need the slightly more general case when $M$ is possibly a non-integer greater than $1$. Here, the statement is that $\mathrm{length}(\gmm([0,1])) \ge 2\lfloor M \rfloor$ where $\lfloor M \rfloor$ is the integer part of $M$. This can be proved similarly by (trying to) connect the endpoint $(M,r_{M})$ to $(\lfloor M \rfloor, r_{0})$. We omit the details. Then, from the assumptions of the proposition, we conclude that $|\partial\Omega(t)|\gtrsim t$, which finishes the proof. 
	\end{proof}



\subsection*{Proof of Proposition \ref{prop:patch-key}}\label{subsec:finish}

We begin by recalling the main theorem of Sideris and Vega \cite{SV}, which ensures a type of stability for the circular patch (see also the works \cite{WP,Tang,CL}): for all $t\ge0$, \begin{equation}\label{eq:SiVe}
		\begin{split}
			|\Omega(t) \triangle B_1|^{2} \le 4\pi \sup_{ \Omega_{0} \triangle B_1 } \left| |x|^{2} - 1 \right| \, |\Omega_{0} \triangle B_1|
		\end{split}
	\end{equation} where $|A\triangle B|$ denotes the measure of the symmetric difference of two sets $A$ and $B$. Applying \eqref{eq:SiVe} in our case, we obtain that $|\Omega(t) \triangle B_1|^{2} \le CN^{2}|\Omega_{0} \triangle B|$  and for any given $0<\ve\ll1$ and $N\gg1$, we can take $\dlt=\dlt(\ve,N)>0$ sufficiently small so that we obtain \begin{equation}\label{eq:SiVe}
		\begin{split}
			|\Omega(t) \triangle B_1| \le \ve
		\end{split}
	\end{equation} for all $t\ge0$. Let us now recall the following standard estimates.

 \begin{lemma}\label{VelocityDecay}
     Let $\omega\in L^1\cap L^\infty(\mathbb{R}^2)$. Then, if $u=\nabla^\perp\Delta^{-1}\omega,$ we have that
     \[\|u\|_{L^\infty}\leq 2(\|\omega\|_{L^1}\|\omega\|_{L^\infty})^{\frac{1}{2}}.\] If we have, moreover, that $\int |\omega(y)||y|^2 \, \rmd y<\infty,$ then
     \[|u(x)|\leq \frac{C}{|x|+1}(\|(1+|y|^2)\omega\|_{L^1}+\|\omega\|_{L^\infty}).\]  
     If, in addition, $\omega$ is $m$-fold symmetric for some $m\geq 3,$ we have 
     \[|u(x)|\leq \frac{C|x|}{(1+|x|)^2}(\|(1+|y|^2)\omega\|_{L^1}+\|\omega\|_{L^\infty}).\]
 \end{lemma}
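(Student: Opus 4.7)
I would derive all three estimates from successively refined splits of the Biot--Savart integral
\[
u(x) = \frac{1}{2\pi}\int_{\mathbb{R}^2}\frac{(x-y)^\perp}{|x-y|^2}\omega(y)\,\rmd y, \qquad\text{so}\quad |u(x)|\le \frac{1}{2\pi}\int \frac{|\omega(y)|}{|x-y|}\,\rmd y.
\]
For the first bound the classical trick applies: cut the integral at $\{|x-y|<R\}$, bound the inner piece by $R\|\omega\|_{L^\infty}$ (via the polar integral $\int_{|z|<R}|z|^{-1}\,\rmd z = 2\pi R$) and the outer by $(2\pi R)^{-1}\|\omega\|_{L^1}$, then optimize in $R$; this yields the stated bound with constant $\sqrt{2/\pi}<2$.

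For the second bound, one cannot reach the $1/|x|$ decay by splitting on $|x-y|$ alone, so I would partition on $|y|$. On $\{|y|\le |x|/2\}$ one has $|x-y|\ge |x|/2$, immediately giving $\le \frac{1}{\pi|x|}\|\omega\|_{L^1}$. On $\{|y|\ge |x|/2\}$ the Chebyshev-type inequality $\int_{|y|\ge |x|/2}|\omega|\,\rmd y\le 4|x|^{-2}\||y|^2\omega\|_{L^1}$ supplies the needed polynomial decay of far-mass; a further split at $|x-y|=R$ inside this region then balances $R\|\omega\|_{L^\infty}$ against $\tfrac{4}{R|x|^2}\||y|^2\omega\|_{L^1}$, and optimization in $R$ gives $\tfrac{C}{|x|}\sqrt{\|\omega\|_{L^\infty}\||y|^2\omega\|_{L^1}}$, which AM--GM absorbs into the norm on the right. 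For $|x|\le 1$ the first estimate takes over, and $1/|x|$ is replaced by $1/(1+|x|)$.

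For the third bound the key idea is a multipole expansion in complex coordinates. Writing $z=x_1+ix_2$, $w=y_1+iy_2$ and using $u_1+iu_2=2i\bar\partial\psi$, one has the clean representation
\[
u_1(x)+iu_2(x) = -\frac{i}{2\pi}\int_{\mathbb{C}} \frac{\omega(w)}{\bar z - \bar w}\,\rmd A(w).
\]
For $|x|\le 1/2$ I would split at $|w|=2|x|$. The inner part is bounded by $\|\omega\|_{L^\infty}\int_{|z-w|<3|x|}|z-w|^{-1}\,\rmd A \le C|x|\|\omega\|_{L^\infty}$. For the outer part expand $1/(\bar w-\bar z)=\sum_{k\ge 0}\bar z^{k}/\bar w^{k+1}$ (which converges since $|z/w|\le 1/2$); under the change of variable $w\mapsto e^{2\pi i/m}w$ the coefficient $\int_{|w|>2|x|}\omega(w)/\bar w^{k+1}\,\rmd A$ picks up the phase $e^{2\pi i(k+1)/m}$, so it vanishes unless $m\mid k+1$. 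Under the hypothesis $m\ge 3$ the first surviving index is $k=m-1$, contributing at most
\[
|x|^{m-1}\left(\|\omega\|_{L^\infty}\int_{2|x|<|w|<1}\frac{\rmd A(w)}{|w|^{m}} + \|\omega\|_{L^1}\right) \le C|x|\|\omega\|_{L^\infty}+|x|^{m-1}\|\omega\|_{L^1} \le C|x|\bigl(\|\omega\|_{L^\infty}+\|\omega\|_{L^1}\bigr);
\]
the higher surviving modes $k=jm-1$ ($j\ge 2$) give geometrically smaller contributions by the same estimate plus $|z/w|\le 1/2$. For $|x|\ge 1/2$ the previous estimate already yields $|u(x)|\le C|x|^{-1}(\cdots)$; assembling the two ranges produces the envelope $|x|/(1+|x|)^2$.

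The delicate step is the third estimate. A naive attempt using only $u(0)=0$ (which already follows from $m\ge 2$) and bounding $|u(x)-u(0)|$ via $\int|K(x-y)-K(-y)||\omega|\,\rmd y$ incurs a logarithmic loss $\log(1/|x|)$ coming from the annulus $2|x|\le|y|\le 1$, where $\|\omega\|_{L^\infty}/|y|^2$ is just barely non-integrable. It is precisely the additional vanishing moment $\int\omega(w)/\bar w^{2}\,\rmd A=0$ secured by the strict inequality $m\ge 3$ (and unavailable for $m=2$) that removes this log and restores the clean $O(|x|)$ behavior at the origin.
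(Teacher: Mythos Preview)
Your argument is correct. For the first two inequalities you follow essentially the paper's route: the paper also splits the Biot--Savart integral at a radius $R$ around $x$, taking $R=(\|\omega\|_{L^1}/\|\omega\|_{L^\infty})^{1/2}$ for the first bound and $R=1/(|x|+1)$ for the second. Your treatment of the second inequality is more detailed than the paper's one-line sketch (you make explicit the further split on $|y|$ and the Chebyshev step needed to handle the far region when $|x|$ is large), but the underlying idea is the same.

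The genuine difference is in the third inequality. The paper does not argue directly: it simply invokes the bound $|u(x)|\le C|x|\|\omega\|_{L^\infty}$ for $m$-fold symmetric $\omega$, citing \cite{EBounded}, and then combines this near-origin behavior with the already-proved $1/|x|$ decay at infinity. You instead give a self-contained derivation via the complex Biot--Savart representation and a multipole expansion, exploiting that the $m$-fold symmetry kills all Laurent coefficients $\int \omega/\bar w^{k+1}$ with $m\nmid(k+1)$, so the first surviving term has $k=m-1\ge 2$. This is a clean, elementary alternative to quoting the external lemma, and your closing remark correctly identifies why $m\ge 3$ is needed: for $m=2$ only the zeroth moment vanishes, and the $k=1$ term produces exactly the logarithmic loss the paper's remark mentions. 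One harmless slip: the sign in your complex formula should be $+\tfrac{i}{2\pi}$ rather than $-\tfrac{i}{2\pi}$ (consistent with $u_1+iu_2=2i\bar\partial\psi$), but this has no effect on the modulus bounds.
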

 \begin{remark}
    In the third estimate, in order to ensure that $u(0)=0,$ we can more generally assume the vanishing of some global integrals. For this to be propagated in the Euler equation, it is best to assume that $\omega$ is $m$-fold symmetric for some $m\geq 2.$ When $m=2$, the same estimate holds with an extra logarithmic factor that would not hinder the subsequent proof. We have elected to take $m\geq 3$ for simplicity. 
\end{remark}

  \begin{proof} This is shown using the exact form of the Newtonian potential, which gives the bound
 \[|u(x)|\leq \frac{1}{2\pi}\int_{\mathbb{R}^2}\frac{|\omega(y)|}{|x-y|} \, \rmd y.\] For the first inequality, we simply break the integral on $\mathbb{R}^2$ into two integrals over the regions $B_{R}(x)$ and $B_{R}(x)^c$, where $R=\Big(\frac{\|\omega\|_{L^1}}{\|\omega\|_{L^\infty}}\Big)^{1/2}.$ For the second inequality, we do the same except now taking $R=\frac{1}{|x|+1}$. For the third inequality, we simply use the fact that $|u(x)|\leq C|x|\|\omega\|_{L^\infty},$ when $\omega$ is $m$-fold symmetric (as in \cite{EBounded}).
 \end{proof}

We  have the following stability estimates:
 \begin{corollary}
     Let $u_* = r\mu(r) e_\theta$ denote the velocity field associated to $\chi_{B_1}$ and $u$ the velocity field associated to $\chi_{\Omega(t)}$. Then, for all $t\in\mathbb{R}$, we have 
     \[\|u-u_*\|_{L^\infty}\leq C\ve^{1/2},\qquad \|u_{\theta}-\mu\|_{L^2}\leq C\ve^{1/4}.\] 
 \end{corollary}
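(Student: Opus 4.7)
I would deduce the first inequality directly from the Sideris--Vega bound \eqref{eq:SiVe} and Lemma \ref{VelocityDecay}. Since $\omega - \omega_* = \chi_{\Omega(t)} - \chi_{B_1}$ is bounded by $1$ pointwise and supported in $\Omega(t)\triangle B_1$ (a set of measure $\le \ve$), we have $\|\omega - \omega_*\|_{L^1} \le \ve$ and $\|\omega-\omega_*\|_{L^\infty} \le 1$. Writing $u - u_* = \nabla^\perp \Delta^{-1}(\omega - \omega_*)$ and applying the first estimate of Lemma \ref{VelocityDecay} immediately gives $\|u - u_*\|_{L^\infty} \le 2\sqrt{\ve}$.

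For the $L^{2}$ bound on $u_\theta - \mu$, the starting point is the identity
\[
u_\theta - \mu = \frac{(u-u_*)\cdot x^\perp}{|x|^2}, \qquad \text{so that} \qquad |u_\theta - \mu| \le \frac{|u-u_*|}{|x|}.
\]
The weight $1/|x|$ is singular at the origin and the integration is over all of $\mathbb{R}^2$, so the plan is to split into three regions: a near-origin disc $\{|x| \le \sqrt{\ve}\}$, a bulk annulus $\{\sqrt{\ve} \le |x| \le 2N\}$, and a far-field $\{|x| \ge 2N\}$. In the bulk the $L^\infty$ estimate just proved yields $|u_\theta - \mu| \le C\sqrt{\ve}/|x|$, whose square integrates to $C\ve \log(2N/\sqrt{\ve}) \le C\ve|\log\ve|$. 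For the far-field, I would carry out a direct multipole expansion of the Biot--Savart kernel: since $|m - m_*| = \bigl||\Omega(t)| - \pi\bigr| \le \ve$ and the dipole moment of $\omega - \omega_*$ vanishes by the $4$-fold symmetry (inherited from $\omega_0$ and preserved by the Euler flow), the expansion yields $|u - u_*|(x) \le C\ve/|x| + CN^2\ve/|x|^3$ for $|x| \ge 2N$, contributing only $O(\ve^2/N^2)$ to $\|u_\theta - \mu\|_{L^2}^2$.

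The main obstacle is the near-origin region, where the $1/|x|$ factor is singular. The resolution is to observe that $\omega - \omega_*$ is $4$-fold symmetric, so the third estimate of Lemma \ref{VelocityDecay} applies to the difference and produces the linear bound $|u(x) - u_*(x)| \le C|x|$ for $|x|\le 1$, with $C$ depending on the fixed construction parameters $N,\delta$ but uniform in $\ve$ for $\ve$ sufficiently small. Thus $|u_\theta - \mu|$ is bounded near the origin, and the contribution to $\|u_\theta - \mu\|_{L^2}^2$ from the near-origin disc is at most $C\ve$. Adding the three contributions gives $\|u_\theta - \mu\|_{L^2}^2 \le C\ve|\log\ve| + O(\ve^2)$, which is bounded by $C\sqrt{\ve}$ for $\ve$ small, yielding the desired $\|u_\theta - \mu\|_{L^2} \le C\ve^{1/4}$.
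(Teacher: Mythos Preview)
Your argument for the first inequality is identical to the paper's, and your three-region decomposition for the second matches the paper's as well: both use the $m$-fold symmetry bound from Lemma~\ref{VelocityDecay} near the origin and the $L^\infty$ estimate $\|u-u_*\|_{L^\infty}\le C\sqrt{\ve}$ in the intermediate annulus.

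The difference---and the gap---is in the far field. Your multipole claim $|u-u_*|(x)\le C\ve/|x|+CN^2\ve/|x|^3$ for $|x|\ge2N$ is not valid uniformly in $t$: the Taylor expansion of $K(x-y)$ about $y=0$ only controls the contribution from $\{|y|<|x|/2\}$, and since the support of $\Omega(t)$ is not contained in any fixed ball as $t\to\infty$, the integral over $\{y\in\Omega(t):|y|>|x|/2\}$ is a genuine remainder you have not bounded. (A careful treatment of that tail, using $|\Omega(t)\triangle B_1|\le\ve$ together with Chebyshev on the conserved second moment, yields only $|u-u_*|\lesssim\sqrt{\ve}/|x|$---worse than your claimed $\ve/|x|$, though in fact still sufficient.) The paper sidesteps this entirely: it reuses the third inequality of Lemma~\ref{VelocityDecay} in the far field, relying only on the finiteness of the conserved second moment to get $|u_\theta-\mu|\lesssim|x|^{-2}$ uniformly in $t$, and places the outer cutoff at $L=\ve^{-1/2}$ rather than at the fixed radius $2N$. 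This gives a far-field contribution of order $L^{-2}=\ve$, while extending the bulk annulus out to $L$ costs only $\ve\log(1/\ve)$.
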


\begin{proof}
    Using \eqref{eq:SiVe} together with the first bound in Lemma \ref{VelocityDecay}, the first inequality is immediate. To get the other inequality, we estimate the square integral of $u_\theta - \mu$ in regions (i) $|x| \le R$, (ii) $R<|x|\le L$, and (iii) $L < |x|$. In each of these regions, we use the bound on $\Vert |x|^{-1} (u-u_*) \Vert_{L^\infty} $, $\Vert u-u_* \Vert_{L^\infty}$, and the last inequality of Lemma \ref{VelocityDecay}, respectively. This gives $$ \|u_{\theta}-\mu\|_{L^2}^{2}\lesssim R^2 + \varepsilon \ln(L/R) + L^{-2}.$$ Choosing $R= \varepsilon^{1/2}$ and $L= \varepsilon^{-1/2}$ gives the stronger estimate $ \|u_{\theta}-\mu\|_{L^2} \lesssim \varepsilon^{1/2} \ln^{1/2}(1/\varepsilon). $
\end{proof}

\begin{proof}[Proof of Proposition \ref{prop:patch-key}]
 Following the recipe given in Section \ref{punctureddomains}, and in particular \eqref{twistingthmunbounded}, it follows that
 \begin{equation}\label{L4Stuff}\|\Phi_{\theta}-\theta-t \mu(\Phi_r)\|_{L^2}\leq C\ve^{1/8}|t|,\end{equation} for all $t\in\mathbb{R}.$   It is easy to deduce the existence of the $\bar{x}_1(t)$ and $\bar{x}_2(t)$ from Proposition \ref{prop:patch-key} lying on the outer boundary of the patch. The main tool in this is \eqref{L4Stuff} and the Jordan Curve Theorem. Indeed, the stability estimate \eqref{eq:SiVe} ensures that there is always an order 1 mass of particles from the bulk of the patch in $B_1(0).$ It thus follows from \eqref{L4Stuff}, after allowing $\ve$ to be sufficiently small, that there exist (a mass of) points from inside the patch for which $\Phi_{\theta}\geq\frac{1}{2}t$. In fact, it can be seen quite simply from Chebyshev's inequality that \textit{most} points inside the patch satisfy this.  Now, since the inside of the patch can never go outside, there must be a boundary point to the left of the bulk points. This establishes the existence of $\bar{x}_1(t)$. The existence of $\bar{x}_2(t)$ is similarly guaranteed since the ``hole" in the patch is taken with large area and there is thus nowhere to go but up. Indeed, it follows that for all time there is an order 1 mass of particles in the hole that is outside of $B_{N/2}(0)$. By \eqref{L4Stuff}, since $|\mu(r)|=\frac{1}{r^2}$ and again allowing $\ve$ to be sufficiently small, we see that there must exist an order 1 mass of particles from the hole with $\Phi_\theta\leq c_{0}t$ for some small $c_{0}>0.$ Since the particles inside the hole remain there for all time, we deduce the existence of $\bar{x}_2(t)$, concluding the proof.
\end{proof}

\section*{Acknowledgements}

We would like to thank A. Elgindi, B. Khesin, G. Misiolek, M. Marcinkowski and D. Sullivan for many useful discussions and comments. We especially thank K. Dembski and S. Tae for pointing out a few typos in the first version of the manuscript. We also thank the anonymous referees for many comments that improved the exposition. We would like to thank the Simons Center for Geometry and Physics for its hospitality during the workshop \textit{Singularity and Prediction in Fluids} during June 2022, where this work was initiated.
The research of TDD was partially supported by the NSF
DMS-2106233 grant,  NSF CAREER award \#2235395 as well as an Alfred P. Sloan Fellowship. The research of TME was partially supported by the NSF grant DMS-2043024, an Alfred P. Sloan Fellowship, as well as a Simons Fellowship. The work of IJ was supported by the Samsung Science and Technology Foundation under Project Number SSTF-BA2002-04.

\end{document}